\DeclareMathOperator*{\argmin}{argmin}
\numberwithin{equation}{section}
\newtheorem{theorem}{Theorem}
\newtheorem{definition}{Definition}
\newtheorem{lemma}{Lemma}
\newtheorem{claim}{Claim}
\title{An Optimization Framework for Power Infrastructure Planning}
\author{
  Nina Wiedemann \\
  Institute for Operations Research\\
  ETH Zurich\\
  Switzerland \\
  \texttt{nina.wiedemann@alumni.ethz.ch} \\
  \And
 David Adjiashvili \\
  Institute for Operations Research \\
  ETH Zurich\\
  Switzerland \\
  \texttt{david.adjiashvili@ifor.math.ethz.ch} \\
}
\begin{document}
\maketitle
\begin{abstract}
    The ubiquitous expansion and transformation of the energy supply system involves large-scale power infrastructure construction projects. In the view of investments of more than a million dollars per kilometre, planning authorities aim to minimise the resistances posed by multiple stakeholders. Mathematical optimisation research offers efficient algorithms to compute globally optimal routes based on geographic input data. \\
    We propose a framework that utilizes a graph model where vertices represent possible locations of transmission towers, and edges are placed according to the feasible distance between neighbouring towers. 
    In order to cope with the specific challenges arising in linear infrastructure layout, we first introduce a variant of the Bellman-Ford algorithm that efficiently computes the \textit{minimal-angle} shortest path. Secondly, an iterative procedure is proposed that yields a locally optimal path at considerably lower memory requirements and runtime. Third, we discuss and analyse methods to output $k$ diverse path alternatives. \\
    Experiments on real data show that compared to previous work, our approach reduces the resistances by more than 10\% in feasible time, while at the same time offering much more flexibility and functionality.
    Our methods are demonstrated in a simple and intuitive graphical user interface, and an open-source package (LION) is available at  \url{https://pypi.org/project/lion-sp}.
\end{abstract}

\section{Introduction}

Power infrastructure layout is a challenge of significant impact on society. Besides ensuring consistent and robust energy supply, it also plays a major role for the successful transition to renewable energies in view of climate change.
The European Union, for example, identified necessary investments in the EU-transmission networks 
of around 50,000 km overall length just from 2014 to 2024, of which 80\% directly or indirectly target the integration of renewable energies~\cite{feix2014netzentwicklungsplan}. With an estimated cost of 1.5 million Euros per km~\cite{tsos2015netzentwicklungsplan}, this leads to investments of approximately 75 billion Euros over ten years. In light of this huge scale, it is more important than ever to optimize the planning process. But while planning authorities aim at minimal costs, environmentalists demand to consider the impact on conservation areas and bird mortality, and local residents oppose huge visible infrastructure. The planning of power lines has fittingly been called a ''field of tension''~\cite{bevanger2011optimal}.

\begin{figure}[ht]
    \centering
    \includegraphics[width=0.8\textwidth]{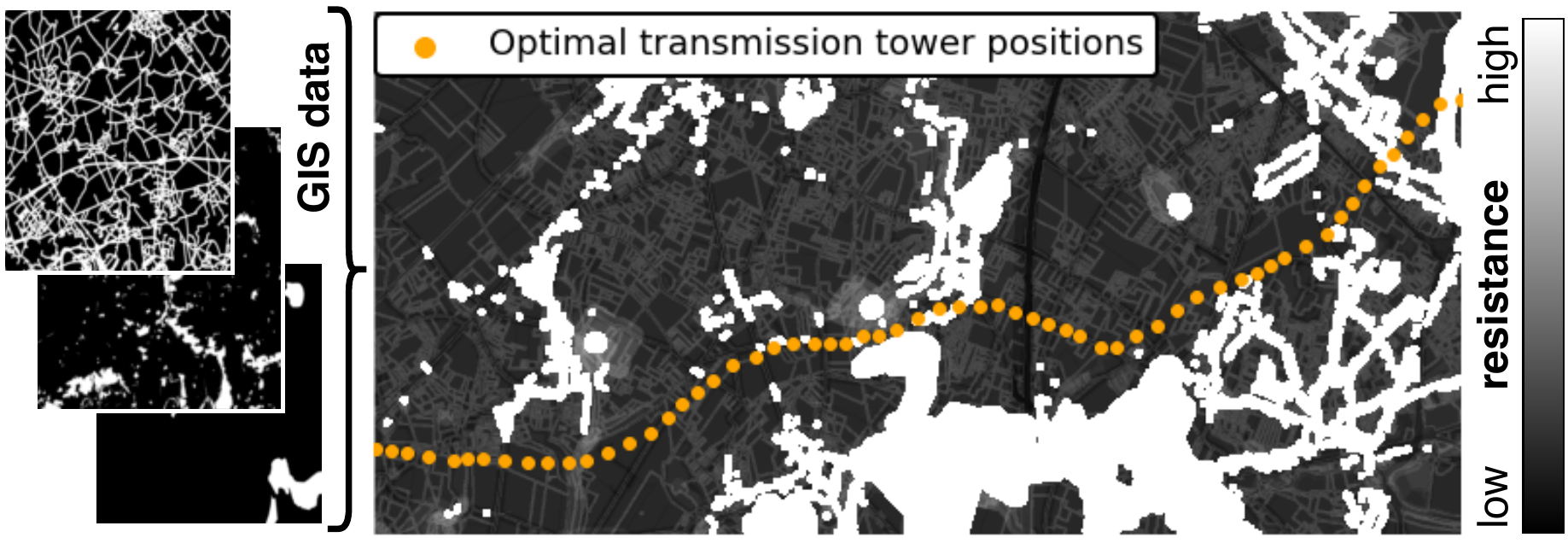}
    \caption{An optimal transmission line planned with our framework. The resistances given with GIS-data are minimized together with the angles along the route, yielding the globally optimal pylon placement.}
    \label{fig:overview}
\end{figure}

Nevertheless, power infrastructure planning is oftentimes highly manual, mostly regarded as a communicative process between experts~\cite{bevanger2011optimal, grossardt2001analytic}. In contrast, the optimal placement of power lines can be formulated as a simple combinatorial problem, when provided with appropriate data extracted from geographic information systems (GIS). Possible computational approaches for optimal layout of power lines have been discussed for a long time in the literature. In most cases the area is rasterized and each cell is assigned a cost or resistance according to GIS data. 
For example, \citet{popp1963electronic} presented very early work on optimal tower placement and even the selection of pylon heights based on the line sag and terrain. 
%
More recently, \citet{bevanger2011optimal} proposed an environment-friendly power-line routing in Norway. A least-cost path (LCP) toolbox was published in the context of the same project~\cite{hanssen2018spatial}. Similarly, \citet{monteiro2005gis} and \citet{de2016least} describe planning algorithms that consider the slope of the terrain and costs on direction change~\cite{monteiro2005gis}. Others develop decision support software that compute not only a single route, but multiple (pareto-optimal) alternatives, using multi-objective optimization techniques~\cite{bachmann2018multi} or valley-finding algorithms applied on the cost surface~\cite{schito2021determining}.

Despite the advances on optimizing power infrastructure layout, the approaches above suffer from a common drawback, namely that they only compute a shortest \textit{route} through a regular grid, where each cell is connected to its 8-neighborhood. 
Transmission towers are only placed as a second step, yielding sub-optimal solutions: Crucially, it is disregarded that the cost\footnote{The term ''resistance'' is used interchangeably with ''cost'' here and thus does not refer to electrical resistance. Instead, it denotes the summarized resistance of several geographic layers.} of placing a power tower in a cell can differ significantly from the cost of traversing a cell with a cable.  We instead suggest a graph model to minimize the resistance of the \textit{pylon} placement, offering several advantages as detailed in \autoref{fig:table}.
%
\\ 
An additional challenge in power infrastructure planning is the avoidance of angles, since angles increase construction costs and electrical resistance. 
Work on angle-constrained shortest path computation in a graph model similar to ours is given by \citet{santos2019optimizing}. 
Building up on~\citet{piveteau2017novel}, they construct a \textit{line graph} in a pre-computed corridor to model angle resistance. These works are important steps towards optimized pylon spotting, but leave room for improvement with respect to computational efficiency.
%
%
%
%

\textbf{Contribution.}
Here, we aim to fill this gap with an efficient framework for the globally optimal placement of pylons in power transmission lines, offering 
the following functionality:
\begin{itemize}
    \vspace{-1em}
    \item Graph modelling of transmission line planning with given (possibly different) cable and pylon resistances.
    \item Provably efficient algorithms to compute the \textit{minimal-angle} shortest path. The baseline approach, a line graph~\cite{santos2019optimizing, piveteau2017novel}, has size $\mathcal{O}(\sum_{v\in V} \delta_v^- \cdot \delta_v^+)$ for a graph with in-degree $\delta_v^-$ and out-degree $\delta_v^+$ at vertex $v\in V$. The Bellman Ford (BF) algorithm thus requires $\mathcal{O}(|V|\cdot \sum_{v\in V} \delta_v^- \cdot \delta_v^+)$ operations. In contrast, we adapt BF to operate on the normal graph with size $\mathcal{O}(\sum_{v\in V} \delta_v^+)$, and reduce the runtime to $\mathcal{O}\big(|V| \cdot \sum_v (\delta_v^- + \delta_v^+) \cdot \log (\delta_v^- \delta_v^+)\big)$ with a convex angle cost function.
    \item An iterative procedure to achieve time and space feasibility with arbitrarily large project regions
    \item Methods to compute a set of $k$ paths that are short but also \textit{diverse} among each other according to a suitable metric. This is beneficial for planners as a decision support tool, to compare and analyse alternatives in different regions.
\end{itemize}

We evaluate our methods on realistic power infrastructure projects in collaboration with \textit{Gilytics}, a software company working on visualization and optimization software for linear infrastructure planning.
We release an open-source package called LION (LInear Optimization Networks) providing all main algorithms. An exemplary output is shown in \autoref{fig:overview}.

\FloatBarrier
\section{Problem setting}

The input to our framework is rasterized geographic data, here with a resolution of 10x10 meter cells. The ''resistance raster'' $C$, describing the costs per cell, is constructed from binary layers $L_1, \dots, L_r$ (\autoref{fig:overview} left) and corresponding weights $w_1, \dots, w_r$. Each layer indicates the occurrence of a geographic feature, e.g. 
$L_i[x,y]=1$ could indicate that cell $(x,y)$ is part of a forest. A feature $L_j$ that prohibits the placement of a transmission tower is integrated with $w_j=\infty$. We refer to previous works for approaches to decide how to weight and summarize these layers~\cite{bevanger2011optimal, 
grossardt2001analytic}. 
Here, we assume that the resistance raster $C$ is simply a weighted sum of the individual layers: 
\begin{gather}
    C =\displaystyle \sum_{i=1}^{r} w_i \cdot L_i
\end{gather}

\subsection{Graph model}\label{sec:graphbuild}
Similar to \cite{santos2019optimizing, piveteau2017novel}, we model power infrastructure planning with a graph $G=(V,E)$ with $n:=|V|$ vertices representing transmission towers, and $m:=|E|$ directed or undirected edges, corresponding to the cable between towers. \autoref{fig:lion_graph} visualizes the graph layout: Each vertex is one cell in the raster, i.e. there exists a bijective function $l: V \rightarrow \mathbb{N} \times \mathbb{N}$, such that $l(v)$ outputs the set of raster coordinates modelled by vertex $v$. Conversely, $l^{-1}(x,y)$ is the vertex representing the cell at coordinates $(x,y)$. When not noted otherwise, $s\in V$ will denote the source vertex and $t\in V$ the target. Furthermore, there is an edge connecting two vertices $u,v$ if and only if their respective pylons could be neighbors in a transmission line, which is the case if their distance is between two bounds $d_{min}$ and $d_{max}$. 
\\
With simplifying assumptions, the setting can be modeled in a directed acyclic graph (DAG) which significantly accelerates shortest path algorithms. 
Let $\vec{a}_{u,v} := l(u)-l(v)$ be the vector from the cell of vertex $u$ to cell of vertex $v$. We then place an edge between a pair of vertices if their distance is between $d_{min}$ and $d_{max}$, but additionally require that $\vec{a}_{u,v}$ diverges less than a certain maximum angle $\theta_{\alpha}$ from the straight line connection from source $s$ to target $t$:
\begin{equation}
\begin{gathered}
E = \Big\{(u,v) \ |\ d_{min}\leq \Vert \vec{a}_{u,v} \Vert \leq d_{max} \land\ cos\big(\frac{\vec{a}_{u,v}\cdot \vec{a}_{s,t}}{\Vert \vec{a}_{u,v}\Vert \Vert \vec{a}_{s,t}\Vert}\big)<\theta_{\alpha}\Big\}
\end{gathered}
\end{equation}
This assumption is justified since in power infrastructure layout it is very unlikely that a tower is placed ''behind'' another one in a transmission line from $s$ to $t$. If $\theta_{\alpha}<\frac{\pi}{2}$, then the graph is a DAG, because the neighbors of each vertex are the cells on the semicircular ring directed towards the target (\autoref{fig:lion_graph}). Note however that the algorithms presented in the following sections are not restricted to DAGs.
\begin{figure*}[ht]
\centering
    \begin{subfigure}[t]{0.27\textwidth}
        \centering
        \includegraphics[width=\textwidth]{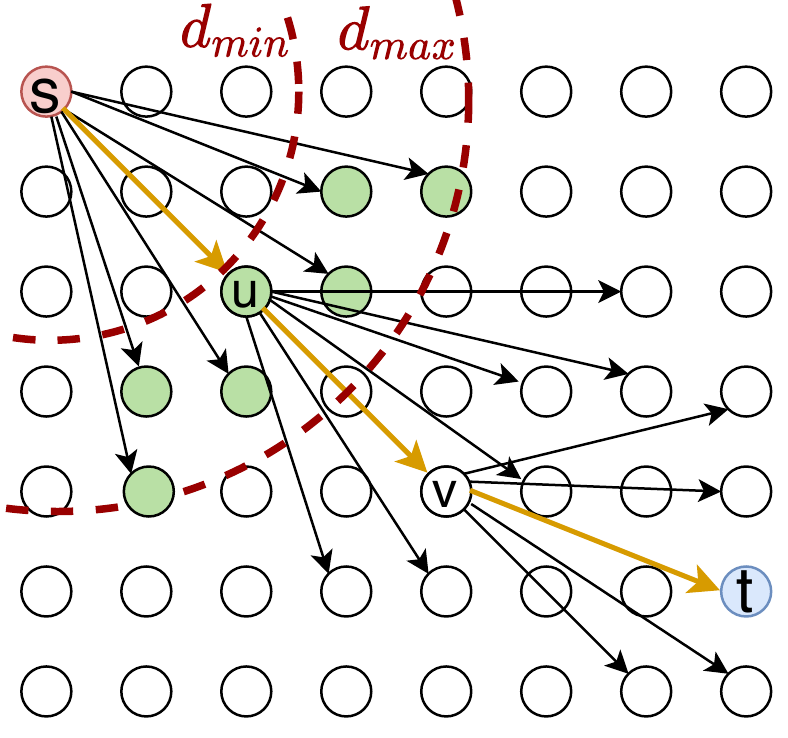}
        \caption{Pylon spotting (ours)}
        \label{fig:lion_graph}
    \end{subfigure}
    \begin{subfigure}[t]{0.37\textwidth}
            \centering
            \includegraphics[width=\textwidth]{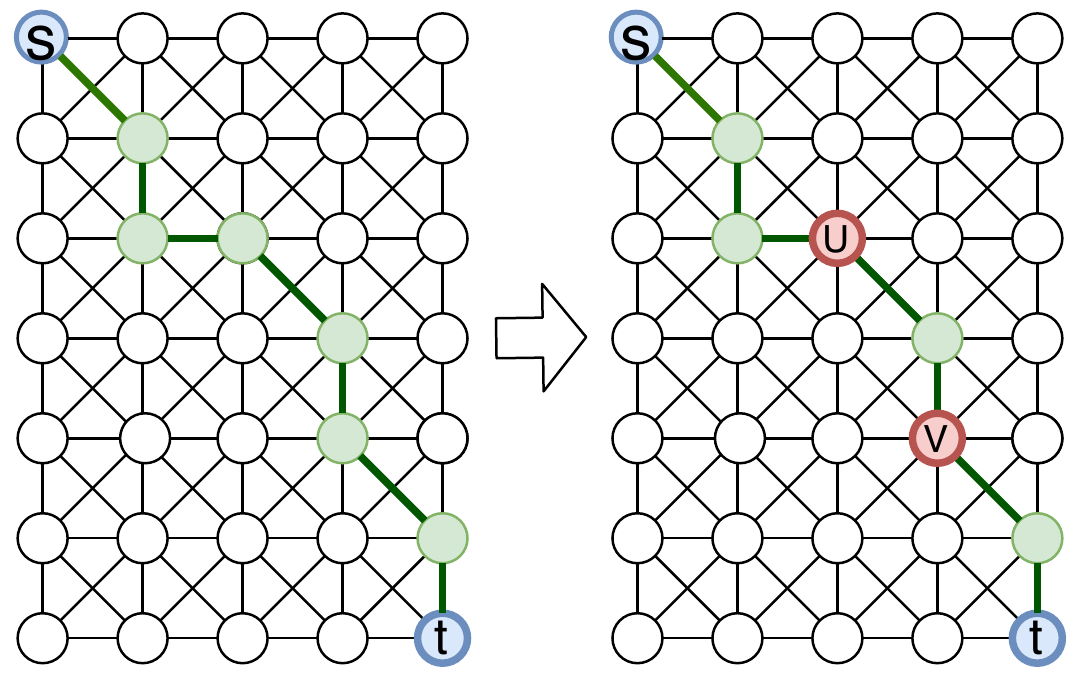}
    \caption{Line routing (two-stage approach often used in prior work) \label{fig:prior}}
    \end{subfigure}
    \begin{subtable}[t]{0.33\textwidth}
        \resizebox{\textwidth}{!}{
        \begin{tabular}[b]{c|c}\hline
         \textbf{Pylon spotting (ours)} & \textbf{Line routing (prior work)} \\ \hline
          \begin{tabular}[c]{@{}c@{}}  Global optimization\\ of pylon positions \end{tabular} & \begin{tabular}[c]{@{}c@{}} Two stage approach \end{tabular}
             \\ \hline
          \begin{tabular}[c]{@{}c@{}} Forbidden regions can \\ be traversed by cable \end{tabular}
            & 
            \begin{tabular}[c]{@{}c@{}}  Forbidden areas \\ cannot be crossed \end{tabular}
             \\ \hline
            \begin{tabular}[c]{@{}c@{}}  Allows to differentiate \\ resistances for \\ pylons and cable \end{tabular}
             & 
             \begin{tabular}[c]{@{}c@{}}  No differentiation \\ possible \end{tabular}
              \\ \hline
            \begin{tabular}[c]{@{}c@{}}  Will find feasible \\ pylon spotting \\ (if existent) \end{tabular}
           & 
            \begin{tabular}[c]{@{}c@{}}  Placing pylons along \\ the precomputed route \\ might be impossible \end{tabular} \\ \hline
            \begin{tabular}[c]{@{}c@{}}  Direct angle \\ minimization \end{tabular}
           & 
            \begin{tabular}[c]{@{}c@{}}  Angles along route \\ $\neq$ angles at pylons \end{tabular}
            \\ \hline
        \end{tabular}}
        \caption{Advantages of our approach \label{fig:table}}
    \end{subtable}
\caption{Comparison of our graph model (\subref{fig:lion_graph}) to previous work (\subref{fig:prior}). In  \subref{fig:lion_graph}) each raster cell is a vertex, and edges are placed between each pair of vertices with distance between $d_{min}$ and $d_{max}$. Only edges with sufficiently low angle to the straight line from $s$ to $t$ are allowed, thereby yielding a directed acyclic graph (DAG). In previous approaches (\subref{fig:prior}), first the optimal \textit{route} through a regular grid is found, and only in the second step transmission towers are placed along the route. The drawbacks of this line-routing compared to direct pylon spotting are listed in \subref{fig:table}).
}
\label{fig:graphbuild}
\end{figure*}

\subsection{From resistances to edge costs}\label{sec:edgecost}
%
In contrast to two-stage \textit{line routing} approaches (\autoref{fig:prior}) used in previous work~\cite{monteiro2005gis, vega1996image, west1997terrain, li2013optimization}, a \textit{pylon-spotting} approach allows a) to differentiate pylon and cable costs, b) to find a globally optimal pylon spotting and c) to minimize the angles at the pylons. In \autoref{fig:table} both approaches are contrasted in more detail.
Pylon and cable resistances can be computed independently with different weights, such that $C_p[x,y]$ is the cost of placing a pylon in cell $(x,y)$ and $C_c[x,y]$ denotes the cost to traverse $(x,y)$ with a cable.
In order to translate $C_p$ and $C_c$ into edge costs in a graph, the cable cost must be summed over several cells, since the cable resembles a 
straight line above a discrete raster of cells (black arrow in \autoref{fig:lion_graph}). To approximate this line, we utilize a well-known line drawing algorithm from computer graphics, the Bresenham line~\cite{bresenham1965algorithm}. 
The computation of the Bresenham line $M_e$ for an edge $e=(u,v)$ yields a set of coordinates corresponding to the raster cells on the line, formally $M_e=\{l(u), (x_2,y_2), \dots, (x_{d-1}, y_{d-1}), l(v)\}$. 
Pylon and cable costs are then combined with a weight $w_c$ that describes the importance of the cable cost (i.e. the average resistance of the cells in $M_e$) in contrast to the pylon cost: 
\begin{gather}\label{eq:edgecost}
\displaystyle c(e) = \frac{C_p[l(u)] + C_p[l(v)]}{2} + w_c\cdot\frac{1}{d} \sum_{(x,y)\in M_e} C_c[x,y]    
\end{gather}
\section{Shortest path algorithms for the minimal-angle least-resistance power line}
We use the Bellman-Ford (BF)~\cite{bellman1958routing, ford1956network} algorithm to compute the shortest (minimal resistance) path, since it is more space-efficient than Dijkstra and the runtime can be reduced if an upper bound on the number of vertices on the shortest path is known. While the BF algorithm will minimize cable and pylon resistances according to \autoref{eq:edgecost}, it is non-trivial to regard di-edge costs such as \textit{angle} costs. The latter problem was formalized as a special case of the \textit{Quadratic Shortest Path Problem (QSPP)}\cite{rostami2015quadratic}, named the Adjacent QSPP. In AQSPP the objective is to minimize an interaction cost of adjacent edges along with the normal edge costs. Even though AQSPP was shown to be NP-hard when the graph is not acyclic~\cite{rostami2018quadratic, hu2018special}, an auxiliary graph structure, namely the \textit{line graph} or \textit{dual graph}, has been exploited in applications to deal with di-edge costs, including power line ~\cite{santos2019optimizing, piveteau2017novel} and other infrastructure planning~\cite{caldwell1961finding, winter2002modeling}. 

However, a major drawback of the line graph is its size: A directed graph with $\delta_v^-$ incoming edges and $\delta_v^+$ outgoing edges at vertex $v$ results in $\sum_v \delta_v^- \cdot \delta_v^+$ 
edges in the line graph. Less memory-consuming representations have been proposed in work on turn constraints in road networks \cite{delling2011customizable}, where Dijkstra is modified to consider turn-tables associated with each vertex. 
Here, we propose a 
variant of the Bellman-Ford algorithm named MABF, that computes the minimal-angle \textit{walk} (or path under certain conditions) in the original graph. 

\FloatBarrier
\subsection{Minimal-angle Bellman Ford algorithm (MABF)}
In addition to the edge weight given by the cost function $c(e): E \longrightarrow \mathbb{R}$ in \autoref{eq:edgecost}, let $\alpha: [0,180] \longrightarrow \mathbb{R}^{+}$ be an angle cost function. We aim to compute a distance map $D$ and a predecessor map $T$ from which the shortest path can be reconstructed. Specifically, $D[e]$ describes the accumulated costs for the (current) shortest path from the source $s$ to the edge $e$, including angle costs. $D$ is initialized to infinity for all edges, except for the outgoing edges $e=(s,-)$ of the source, which are instead set to their respective edge cost, formally $D[e] = c(e)\ \ \forall e=(s, -)$.  

\begin{figure}[ht]
    \centering
    \begin{subfigure}[b]{0.35\columnwidth}
    \centering
    \includegraphics[width=\textwidth]{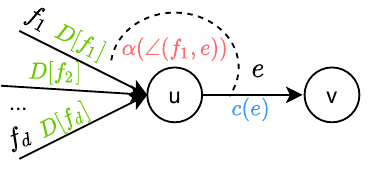}
    \caption{Update step in minimal-angle BF}
    \label{fig:bfangle}
    \end{subfigure}
    \hspace{4em}
    \begin{subfigure}[b]{0.25\columnwidth}
            \centering
            \includegraphics[width=\textwidth]{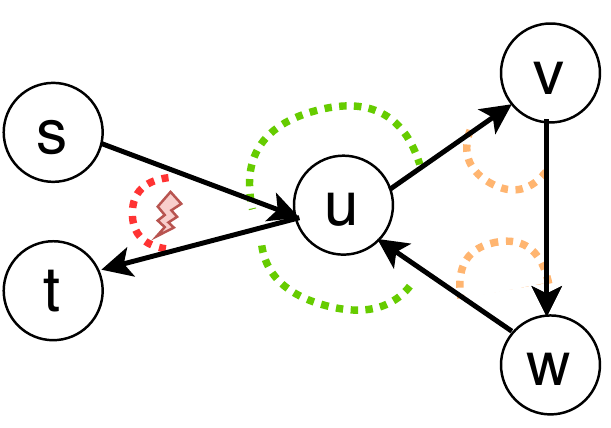}
    \caption{Cycle on optimal \textit{walk} \label{fig:cycle}}
    \end{subfigure}
    
\caption{Minimal-angle shortest path computation: As shown in \subref{fig:bfangle}), the minimum over the incoming edges $f_1, \dots, f_d$ must be taken to find the optimal predecessor of edge $e$ with respect to the distance $D$ and angle cost $\alpha$. For non-concave $\alpha$, the algorithm can yield cyclic paths as in \subref{fig:cycle}), since high angle costs (red) can be avoided by using more but smaller angles along the cycle (green and orange). 
}
\label{fig:detourcycle}
\end{figure}

Algorithm \ref{alg:alg1} outlines the proposed procedure to find the angle-penalized least cost \textit{walk} from $s$ to $t$. Note that in the general case the route can contain cycles because thereby sharp turns (high angle costs) are avoided (see \autoref{fig:cycle}). We first describe the algorithm and then show that the result will be a \textit{path} when $\alpha$ is concave.
We assume an upper bound $p$ on the number of edges in the shortest path. Note that $p<n$, but due to the raster-layout of the power infrastructure graph, it is usually safe to set $p$ to a value much smaller than $n$. BF computes the shortest path tree by updating the distances to all edges for $p$ iterations. 
\autoref{fig:bfangle} visualizes the new update step applied to edge $e=(u,v)$: Its distance from $s$ is given by the minimum over the distances of its possibly preceding edges, together with the respective angle cost:
\begin{gather}\label{eq:update}
D[e]=\displaystyle\min_{f\in E,\ f=(-, u)} D[f] + c[e] + \alpha(\sphericalangle( f, e))
\end{gather}

\begin{algorithm}[ht]
\Indm 
\SetAlgoLined
\Indp{Input: $G$, $s$, $c$, $\alpha$}\\
 $D[e] = \infty\ \ \forall e\in E$\;
 $D[(s, -)] = c((s, -))$\;
 $T[e] = e\ \ \forall e\in E$\;
 \For{$p$ iterations}{
 \For{$i\gets0$ \KwTo $m$}{
 \SetInd{0.1em}{0em}
  $(u, v) \leftarrow e_i$ \;
  $D[e_i] = \displaystyle\min_{f\in E,\ f=(-,u)} D[f] + c[e_i] + \alpha(\sphericalangle(f, e_i)) $\;
  $T[e_i] = \displaystyle\argmin_{f\in E,\ f=(-,u)} D[f] + c[e_i] + \alpha(\sphericalangle (f, e_i)) $
 }
 }
 \caption{Minimal-angle Bellman-Ford algorithm}
 \label{alg:alg1}
\end{algorithm}

In the following, it is shown that this variant of the Bellman-Ford algorithm computes the correct distances.

\begin{theorem}\label{theorem:bf}
Let $e_t = (-, t)$ be any edge in $G$. Furthermore, let $P^*$ be a least cost walk of length $|P^*| \leq p$ from $s$ to $t$, and denote the edges on $P^*$ as $P^*=\{e_1, ..., e_t\}$.

After $p$ update iterations in Algorithm~\ref{alg:alg1}, it holds that $$D[e_t] = \displaystyle c(e_1) + \sum_{i=2}^{t} c(e_i) + \alpha(\sphericalangle (e_{i-1}, e_i)) := c(P^*)$$ and $c(P^*)$ is the cost of the shortest minimal-angle walk. 
\end{theorem}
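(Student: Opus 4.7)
The natural approach is induction on the iteration count, but the state we track needs to be indexed by edges rather than by vertices because the angle cost depends on the predecessor edge. I would prove the following stronger invariant: after $k$ full update iterations of Algorithm~\ref{alg:alg1}, for every edge $e \in E$ the value $D[e]$ equals the minimum cost (with respect to the combined edge plus angle cost) of any walk from $s$ that terminates with edge $e$ and uses at most $k+1$ edges in total, where $\infty$ denotes the absence of such a walk. Setting $k = p-1$ (which is covered by the $p$ iterations) and restricting to edges of the form $e = (-,t)$ yields the statement of the theorem after taking the outer minimum over such edges, since any walk from $s$ to $t$ of length $\leq p$ ends with some such edge.

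The base case $k = 0$ is handled directly by the initialization: $D[e] = c(e)$ for every edge $e = (s,-)$ corresponds to the unique one-edge walk from $s$ ending with $e$, and $D[e] = \infty$ for all other edges, which is correct since a one-edge walk from $s$ must begin at $s$. For the inductive step, fix an edge $e = (u,v)$ and assume the invariant after $k$ iterations. Any walk from $s$ ending with $e$ of length at most $k+2$ and having more than one edge consists of a walk ending with some edge $f = (-,u)$ of length at most $k+1$, followed by $e$, contributing the additional cost $c(e) + \alpha(\sphericalangle(f,e))$. The inductive hypothesis applied to the prefix gives the minimum such prefix cost as $D[f]$, and taking the minimum over all candidate predecessor edges $f$ yields exactly the update rule in \eqref{eq:update}. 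If $e = (s,-)$ the single-edge walk of cost $c(e)$ is already captured by the initial value (and the update cannot increase $D[e]$ since the minimum in \eqref{eq:update} is always an upper bound computation in combination with the current value, or, equivalently, the initial assignment is preserved when no incoming edge improves it).

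The subtle point — and the one I would treat carefully — is that within a single iteration the values $D[f]$ used on the right-hand side of the update may already have been overwritten with distances corresponding to walks of up to $k+2$ edges rather than $k+1$. This does not break the invariant: on one hand the resulting $D[e]$ still corresponds to the cost of \emph{some} valid walk from $s$ ending with $e$ (of length at most $2k+3$, but that only matters as an upper bound), so $D[e]$ is never an underestimate of the true shortest walk cost of any length; on the other hand, the walk of at most $k+2$ edges attaining the target minimum is still witnessed because its prefix uses at most $k+1$ edges and the corresponding $D[f]$, whether updated in this iteration or not, is at most that prefix's cost by the inductive hypothesis and monotonicity of updates. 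Combining these two directions gives equality, closing the induction.

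The main obstacle is precisely this in-place update subtlety, which requires separating the two inequalities: ``$D[e]$ is never below the true min-cost walk cost'' (argued by exhibiting an explicit walk realizing any $D[e]$ value ever written) and ``$D[e]$ is at most the min-cost walk cost of bounded length'' (argued by unrolling the optimal walk and using monotonicity, since $D$-values only decrease during execution). Once these two halves are in place, the theorem follows by applying the invariant with $k = p-1$ and taking the minimum over edges entering $t$, noting that the hypothesis $|P^*| \leq p$ guarantees the optimal walk is captured at iteration $p-1 \leq p$.
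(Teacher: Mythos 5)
Your proof follows essentially the same route as the paper's: induction on the number of Bellman--Ford iterations, with the distance state indexed by edges so that the angle cost to the predecessor edge is captured, and the optimal walk recovered by unrolling its last edge. In fact your treatment is more careful than the paper's own proof, which silently assumes the minimum in the update rule is attained at the optimal predecessor $f^*$ and does not address the in-place update subtlety (that $D[f]$ may already reflect longer walks within the same iteration); your separation into the ``never an underestimate'' and ``at most the best bounded-length walk'' inequalities closes exactly those gaps.
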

\begin{proof}
Proof by induction on $p$. The base case $p=1$ is given by the initialization $D[(s,-)] = c((s,-))$.
As the induction step, consider the least cost walk $P^*$ from $s$ to $e_t=(u, t)$ such that ($c(P^*)$ is minimal. Let $f^*$ be the edge preceding $e_t$ on $P^*$, so $f^*$ is of the form $(-,u)$. Then $P^*\setminus e_t$ is a shortest walk from $s$ to $f^*$ of length $p-1$. By induction, after $p-1$ iterations we have $D[f^*]=c(P^*\setminus e_t)$. More general, for all incoming edges of $u$, defined as $f_j: f_j\in E, f_j=(-,u)$, $D[f_j]$ contains the total cost of the shortest walk $P_j$ from $s$ to $f_j$ satisfying $|P_j|\leq p-1$. Then the distance of $e_t$ is updated according to \autoref{eq:update}:
\begin{gather*}
    D[e_t] = \min_j D[f_j] + c[e_t] + \alpha(\sphericalangle(f_j, e_t)) \\ 
    = D[f^*] + c[e_t] + \alpha(\sphericalangle(f^*, e_t)) \\
    = c(P^*\setminus e_t) + c[e_t] + \alpha(\sphericalangle(f^*, e_t))
    \\ = c(P^*)
\end{gather*}
Thus, $f^*$ is correctly assigned as the predecessor of $e_t$.
\end{proof}

\subsection{Computing acyclic paths}
As shown in \autoref{fig:cycle}, the route with least angle resistances might contain cycles, since strong angles (red) are avoided with multiple small angles (green, orange). The following lemma proves a sufficient condition that guarantees that the output is a \textit{path}.
\begin{lemma}
If the angle cost function $\alpha: [0,180]\longrightarrow \mathbb{R}^+$ is concave, algorithm~\ref{alg:alg1} returns a shortest path.
\end{lemma}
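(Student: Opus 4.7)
The plan is to argue by cycle excision: any walk from $s$ to $t$ containing a cycle can be replaced by a strictly shorter walk of no greater cost. Since Algorithm~\ref{alg:alg1} returns a minimum-cost walk by Theorem~\ref{theorem:bf}, iterating this step shows that at least one minimum-cost walk is a simple path, which the algorithm will then output.

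For the excision step I would pick any vertex $u$ that is visited twice by the walk $W$ and decompose $W = W_1 \cdot C \cdot W_2$, where $C = f_1 f_2 \cdots f_k$ is a closed sub-walk at $u$. Let $e_{in}$ be the last edge of $W_1$ and $e_{out}$ the first edge of $W_2$, and define the shortcut $W' = W_1 \cdot W_2$. Then
\begin{equation*}
c(W) - c(W') = \sum_{i=1}^k c(f_i) + \alpha(\sphericalangle(e_{in}, f_1)) + \sum_{i=1}^{k-1} \alpha(\sphericalangle(f_i, f_{i+1})) + \alpha(\sphericalangle(f_k, e_{out})) - \alpha(\sphericalangle(e_{in}, e_{out})).
\end{equation*}
Since each $c(f_i) \geq 0$, the task reduces to showing that the sum of angle penalties around the cycle dominates the single angle penalty $\alpha(\sphericalangle(e_{in}, e_{out}))$.

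For this key inequality I would combine two ingredients. The arc-length triangle inequality on the unit circle, iterated, gives $\sphericalangle(e_{in}, e_{out}) \leq \sphericalangle(e_{in}, f_1) + \sum_{i=1}^{k-1} \sphericalangle(f_i, f_{i+1}) + \sphericalangle(f_k, e_{out})$. Concavity of $\alpha$ together with $\alpha(0) \geq 0$ yields subadditivity, $\alpha(a+b) \leq \alpha(a) + \alpha(b)$, on the portion of the domain where $a+b$ remains in $[0,180]$; combined with the natural monotonicity expected of an angle-penalty function, these chain into $\alpha(\sphericalangle(e_{in}, e_{out})) \leq \sum \alpha(\sphericalangle(\cdot,\cdot))$, which is exactly what is needed.

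The delicate point I anticipate is the $[0,180]$ domain restriction in the subadditivity step: the total angle traversed around a closed loop in the plane is already at least $360^\circ$, so one cannot apply $\alpha(\sum) \leq \sum \alpha$ in a single stroke. I would handle this by a rescaling trick: replace each $\sphericalangle$ along the cycle by $\sphericalangle \cdot \sphericalangle(e_{in}, e_{out})/(\sum \sphericalangle)$, which renormalises the total to exactly $\sphericalangle(e_{in}, e_{out}) \in [0,180]$, only shrinks each argument, and hence only decreases each $\alpha$-value by monotonicity; a single invocation of subadditivity then closes the chain. After the cycle-excision inequality is in place, iterating it strictly reduces the edge count each time and yields a path whose cost is at most $c(W)$, hence itself optimal.
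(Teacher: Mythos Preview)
Your argument is correct and takes a genuinely different route from the paper. The paper invokes the Closed-Path (total-turning) theorem of Abelson and diSessa: the turning angles around the closed sub-walk sum to a multiple of $360^\circ$, so after discarding the entry angle at least $180^\circ$ remain, and concavity then gives $\sum_i \alpha(\sigma_i) \ge \alpha(180) \ge \alpha\big(\sphericalangle(e_{in},e_{out})\big)$. You instead rely only on the triangle inequality for the angular metric on the circle, obtaining the weaker but sufficient bound $\sum_i \sigma_i \ge \sphericalangle(e_{in},e_{out})$, and then close with your rescaling-plus-subadditivity step. Your route is more self-contained---no external geometric theorem is needed---and it handles the domain issue more carefully: the raw sum of turning angles typically exceeds $180$, so the paper's intermediate expression $\alpha(\sum_i\sigma_i)$ is not literally defined, whereas your rescaling keeps every argument inside $[0,180]$. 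The paper's route, on the other hand, yields the stronger geometric fact that any planar cycle forces at least $180^\circ$ of total turning, which is of independent interest even though it is not needed for the lemma.

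One caveat shared by both arguments: monotonicity of $\alpha$ is used but not stated in the hypothesis. You flag it explicitly as ``natural monotonicity''; the paper uses it silently both in the step $\alpha(\sum_i\sigma_i)\ge\alpha(180)$ and in bounding the shortcut cost by $\alpha(180)$. Your explicit acknowledgement is the more honest treatment.
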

\begin{proof}
According to the Closed-Path theorem formulated by \citet{abelson1986turtle}, ''a total turning along any closed path is an integer multiple of $360^{\circ}$'', which applies to a cycle in a digraph. However, the angle at the entry vertex is not included in our case, leaving at least $180^{\circ}$ for the cycle. Let $\sigma_1, \dots, \sigma_n$ denote all angles encountered on the cycle. Then $\forall i:\ \sigma_i\in [0,180] \text{ and } \sum_{i=1}^n \sigma_i \geq 180$ due to the Closed-Path theorem. It is further well-known that for a concave function $\alpha$ it holds that $\sum_i \alpha(x_i) \geq \alpha(\sum_i x_i)$, and therefore it follows $$\sum_{i=1}^n \alpha(\sigma_i) \geq \alpha\Big(\sum_{i=1}^n \sigma_i \Big) \geq \alpha(180)\ .$$ We conclude that the angle costs accumulating on the cycle will be at least as large as the angle cost that would be part of the direct path, which is at most $\alpha(180^{\circ})$.
\end{proof}

\subsection{Time complexity}
The above results show the correctness of algorithm~\ref{alg:alg1} to compute the minimal-angle shortest walk for general and the shortest path for concave angle cost functions. While it improves on the space requirements in contrast to a line graph construction, namely $\mathcal{O}(m)$ in contrast to $\mathcal{O}(\sum_v \delta_v^- \cdot \delta_v^+)$, it has the same runtime: In each update of an edge, the minimum over the incoming edges is taken, leading to a runtime of $\mathcal{O}\big(p \cdot \sum_v \delta_v^- \cdot \delta_v^+\big)$, corresponding to the runtime of normal BF applied on the line graph. We however claim that the runtime can be improved significantly, if the angle cost function is a discrete (step) function or convex.

\begin{lemma}\label{lemma:runtime1}
If $\alpha: [0,180]\longrightarrow S$ is a step function with $|S|$ possible angle costs ($|S|<\delta_v^-, \delta_v^+$), the shortest walk can be computed in time $\mathcal{O}\big(p \cdot \sum_v (|S|\cdot \delta_v^- + \delta_v^+) \cdot \log (|S| \delta_v^- \delta_v^+)\big)$.
\end{lemma}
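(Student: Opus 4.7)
The plan is to exploit the level-set structure of $\alpha$ to replace the brute-force inner minimization $\min_{f=(-,u)} D[f]+\alpha(\sphericalangle(f,e))$ by an offline batched range-minimum query per vertex. The key observation is that $\alpha$ partitions $[0,180]$ into at most $|S|$ maximal intervals $I_1,\dots,I_{|S|}$, on each of which $\alpha$ takes a constant value $s_i$. Fix the common endpoint $v$; for each incoming edge $f=(-,v)$ and each level $i$, the ``candidate cost'' $D[f]+s_i$ is valid for exactly those outgoing edges $e=(v,-)$ whose direction relative to $f$ lies in $I_i$. Geometrically, this set of directions is the union of at most two circular arcs on $[0,360)$ (one arc on each side of $f$'s direction, because the turn angle is unsigned).

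Consequently, the update step~\eqref{eq:update} at $v$ reduces to the following offline problem: given $\mathcal{O}(|S|\delta_v^-)$ weighted arcs on a circle (one pair per $(f,i)$) and $\delta_v^+$ query directions (one per outgoing edge $e$), report for each query the minimum weight of an arc containing it, and then add $c(e)$. I would solve this with a single sweep over directions: sort the $\mathcal{O}(|S|\delta_v^-+\delta_v^+)$ events (arc openings, arc closings, queries), and maintain the multiset of currently active weights in a balanced BST, or equivalently in a min-heap with lazy deletion. Each event then costs $\mathcal{O}(\log(|S|\delta_v^-))$, yielding a per-vertex per-iteration bound of $\mathcal{O}\bigl((|S|\delta_v^-+\delta_v^+)\log(|S|\delta_v^-\delta_v^+)\bigr)$. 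Summing over all $v$ and the $p$ outer BF iterations gives exactly the bound claimed in Lemma~\ref{lemma:runtime1}, and correctness is inherited verbatim from Theorem~\ref{theorem:bf} because the sweep returns the same minimum as~\eqref{eq:update}, just faster.

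The main obstacle I expect is the circular geometry rather than any algorithmic subtlety: arcs that straddle the $0/360$ wraparound must be split into two ordinary intervals before sweeping; the half-open endpoints of the step intervals $I_i$ must be tracked carefully so that each outgoing direction is charged to exactly one level $s_i$ per incoming edge (no double-counting and no gaps at shared boundaries); and if lazy deletion is used then one must argue that each inserted weight is popped at most once so the amortized per-event cost stays logarithmic. A minor subtlety is the case $\delta_v^-=0$ (the source) or $\delta_v^+=0$ (the sink), where the sweep is vacuous and the bound degenerates gracefully. None of these are structural difficulties, so the bound follows from a clean sweep-line argument combined with the correctness guarantee already established for Algorithm~\ref{alg:alg1}.
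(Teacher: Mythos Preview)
Your proposal is correct and hits the same asymptotic bound as the paper, but the inner algorithm is genuinely different. Both you and the paper make the identical reduction: at a vertex $v$ with $k$ incoming and $l$ outgoing edges, there are $\mathcal{O}(k|S|)$ weighted circular arcs (one per pair $(f,i)$) and $l$ query directions, and the task is to report for each query the minimum-weight arc containing it. Where you diverge is in how this ``minimum stabbing interval'' problem is solved. You run an angular sweep: sort all $\mathcal{O}(k|S|+l)$ events, maintain the multiset of active arc weights in a balanced BST, and answer each query with a find-min. The paper instead sorts the $k|S|$ candidates by \emph{weight}, inserts the $l$ outgoing directions into an AVL tree, and then greedily processes candidates in increasing weight order, letting each candidate claim (and delete) every still-unassigned query direction inside its angular range via a range-enumeration on the tree. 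Both approaches are textbook and both land at $\mathcal{O}\bigl((k|S|+l)\log(k|S|l)\bigr)$ per vertex. Your sweep is arguably more modular (the stabbing problem is solved as a black box), while the paper's weight-ordered greedy makes the correctness argument a one-liner: the first candidate to touch a query direction is necessarily the minimum, because candidates arrive in sorted order. The wraparound and half-open boundary issues you flag are real and are exactly the ones the paper also has to handle.
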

\begin{lemma}\label{lemma:runtime2}
For convex, monotonically increasing $\alpha$ the shortest walk can be computed in time $\mathcal{O}\big(p \cdot \sum_v (\delta_v^- + \delta_v^+) \cdot \log (\delta_v^- \delta_v^+)\big)$.
\end{lemma}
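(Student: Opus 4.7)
The plan is to speed up the inner minimisation in Algorithm~\ref{alg:alg1} from $\Theta(\delta_u^-\delta_u^+)$ per vertex $u$ per iteration to $O((\delta_u^-+\delta_u^+)\log(\delta_u^-\delta_u^+))$ by exploiting a Monge structure that convexity of $\alpha$ induces on the per-vertex update. Correctness is then inherited from Theorem~\ref{theorem:bf}, since we compute exactly the same minima, only faster.

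Fix an iteration and a vertex $u$. I would first sort the incoming edges $f_1,\ldots,f_{\delta_u^-}$ by their direction $\phi_1<\cdots<\phi_{\delta_u^-}$ and the outgoing edges $e_1,\ldots,e_{\delta_u^+}$ by their direction $\psi_1<\cdots<\psi_{\delta_u^+}$; this preparation costs $O((\delta_u^-+\delta_u^+)\log(\delta_u^-+\delta_u^+))$ and is absorbed by the target bound. Introduce the matrix
$$M[i,j]\;:=\;D[f_j]+\alpha\bigl(\sphericalangle(f_j,e_i)\bigr),$$
so that the update in \autoref{eq:update} is $D[e_i]=c(e_i)+\min_j M[i,j]$. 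Computing all outgoing updates at $u$ therefore reduces to computing the $\delta_u^+$ row minima of $M$.

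The key step is to show that $M$ (or a constant number of explicit submatrices that together cover it) satisfies the Monge quadrangle inequality
$$M[i,j]+M[i',j']\;\leq\;M[i,j']+M[i',j]\qquad(i<i',\;j<j').$$
The $D[f_j]$ terms cancel, so it suffices to prove a quadrangle inequality for $\alpha$ evaluated at the four corresponding turn angles. When the four directions $\phi_j,\phi_{j'},\psi_i,\psi_{i'}$ all lie inside a common semicircle, each $\sphericalangle(f_\cdot,e_\cdot)$ is an affine function of the signed difference $\phi-\psi$, and since $\alpha$ is convex and monotonically increasing the composition $\alpha(|\cdot|)$ is convex on $[-180,180]$; the inequality is then the standard calculation that matrices of the form $[g(x_i-y_j)]$ with $g$ convex and $x,y$ sorted are Monge. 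To handle the wrap on the circle I would either (i) partition the outgoing directions into the $O(1)$ contiguous blocks cut out by the antipodes of the $\phi_j$'s, or (ii) unfold the sorted incoming directions to $\phi\pm 360^\circ$ and restrict each row to the $180^\circ$ window that actually realises its turn angle. Either tactic produces $O(1)$ Monge sub-matrices whose row minima can be combined in linear time.

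Given Monge-ness on each sub-matrix, I would apply the classical divide-and-conquer row-minima recursion: brute-force the minimum of the middle row in $O(\delta_u^-)$, observe that Monge forces the argmins of the upper and lower halves to sit to its left and right respectively, and recurse. This runs in $O((\delta_u^-+\delta_u^+)\log(\delta_u^-\delta_u^+))$ per vertex per iteration. Summing over all $v\in V$ and the $p$ iterations of Algorithm~\ref{alg:alg1} yields the claimed bound. I expect the main obstacle to be making the Monge argument genuinely robust to the circular angle coordinate: the quadrangle inequality is only local on the circle, and a careless case split could either violate Monge for queries that straddle a wrap or shatter $M$ into $\omega(1)$ blocks, either of which would destroy the target runtime.
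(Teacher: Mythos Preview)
Your Monge/row-minima strategy is a genuinely different route from the paper's. The paper does not set up a quadrangle inequality at all; instead, for each vertex it views the incoming edges as a family of shifted copies $f_i(x)=D[e_i^-]+\alpha(|\alpha_i-x|_m)$ of the same convex profile, processes them in increasing order of $D[e_i^-]$, and maintains the \emph{lower envelope} of these functions via an AVL tree of breakpoints (each $f_i$ contributes at most one contiguous arc, so insertions and deletions amortise). Outgoing edges are then answered by a single tree lookup each, giving $O((k+l)\log kl)$ per vertex. Your approach is attractive because it reduces the per-vertex work to a clean row-minima primitive and avoids the somewhat intricate envelope bookkeeping; the paper's approach, in turn, sidesteps the circular-Monge issue entirely because the envelope is naturally defined on the circle.

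That circular issue is where your proposal currently has a real gap. Option~(i) as you state it---cutting the outgoing directions at the antipodes of the $\phi_j$---produces $\Theta(\delta_u^-)$ blocks, not $O(1)$, which would push you back to $\Theta(\delta_u^-\delta_u^+)$ work. Option~(ii), unfolding to $\phi\pm 360^\circ$ and restricting each row to its $180^\circ$ window, is the right instinct, but the resulting object is a \emph{staircase} totally monotone matrix (the valid column range slides with $i$), and the vanilla middle-row divide-and-conquer you describe does not directly apply to it. To close the argument you would need either a careful $O(1)$-piece decomposition (for instance, split the circle at one fixed direction and handle the two wrap cases separately, then show each piece is a genuine staircase-Monge array) together with an explicit appeal to a staircase-SMAWK/Aggarwal--Klawe style algorithm, or else observe that in the DAG model of \autoref{sec:graphbuild} all edge directions lie in a half-plane so the wrap never occurs---but the lemma is stated for general graphs, so that escape is not available here. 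Until one of these is carried out, the $O(1)$-submatrix claim is the load-bearing step that is still missing.
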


The proofs for Lemma~\ref{lemma:runtime1} and Lemma~\ref{lemma:runtime2} are found in Appendix~\ref{discrete} and ~\ref{convex} respectively. Both algorithms are implemented in the LION package and are used automatically based on the size of the instance and the selected angle cost function. To summarize, we provide space- and time efficient shortest path algorithms that minimize angles along the path and omit the need for a line graph construction. 
\section{Improving space complexity with an iterative multi-scale approach}\label{sec:multiscale}
Despite space-efficient algorithms, the graphs can still become extremely large, in the order of billions of edges. In particular, $m$ increases \textit{quartic} with the resolution: Not only the number of vertices grows quadratically, but also the number of neighbors per vertex. We take advantage of this observation with a multi-scale approach that narrows down the region of interest iteratively. \\Initially, the path is computed on a coarse downsampled version of the instance. Then the resolution is increased over several steps while simultaneously restricting the project region to a corridor around the previously computed path. Let $(r_1, \dots, r_q)$ be a list of downsampling factors, and let $\vartheta$ be an upper bound on the number of edges (i.e. a memory limit).
\begin{figure}[b!]
    \centering
    \includegraphics[width=0.8\textwidth]{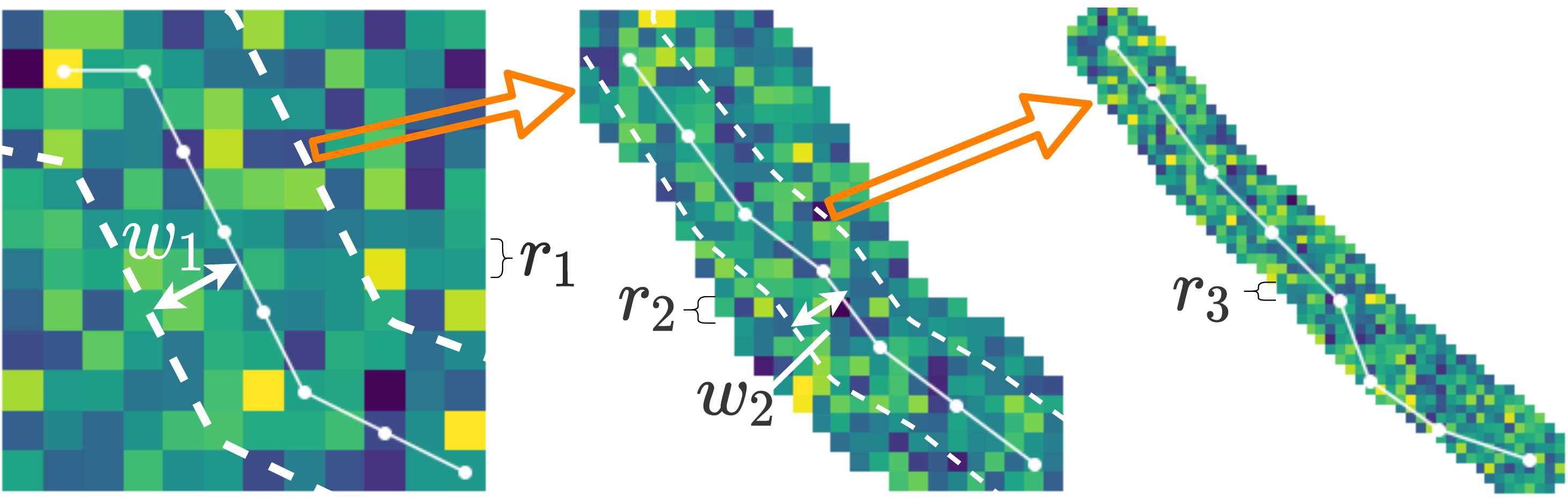}
    \caption{Iterative multi-scale approach: The instance is downsampled by $r_1$ and the optimal path $P$ is computed (left). For the next iteration, the region is restricted to a corridor of width $w_1$ around $P$ (dotted line left), and again the best path in the corridor with resolution $r_2$ is yielded. The output is the best path through the final region (resolution $r_3$ in corridor with size $w_2$).}
    \label{fig:pipeline}
\end{figure}
\autoref{fig:pipeline} provides a visual explanation: The resolution of the instance is decreased by a factor of $r_1$ and the optimal path $P_1$ is found at this scale. For the next iteration, the project region is restricted to a corridor around $P_2$ (white dotted line). In this corridor, the optimal path is computed with resolution corresponding to $r_2$. These steps are repeated until reaching the highest resolution (usually $r_q=1$). In each step $i$, the width $w_i$ of the corridor is set to the maximum feasible width such that the number of edges remains below $\vartheta$ in the next iteration (where the instance is downsampled by $r_{i+1}$). Importantly, this multi-scale approach can also be combined with the computation of $k$ diverse shortest paths explained in the following section: The corridor for the next iteration is in this case a buffer around all $k$ paths. In general, the multi-scale procedure will yield only a \textit{locally} optimal path, but with a sufficiently homogeneous instance, good results can be achieved in significantly lower time and space.
\section{Trading off soft and hard criteria with k diverse shortest paths}\label{ksp}

Since the single optimal path is solely based on the geographic input data (hard criteria), it might not comply with soft criteria coming up in the analysis of experienced planners. To trade off such criteria in discussions between planners and involved stakeholders, it is of interest to propose a set of multiple short paths for the sake of comparison. \\ 
A very efficient method to compute $k$ shortest paths is given with Eppstein's algorithm~\cite{eppstein1998finding} if paths are allowed to be loopy.
Here, it is first shown that Eppstein's algorithm can be easily adapted to return the $k$ minimal-angle shortest paths. 
As defined above, let $p$ be an upper bound on the path length.

\begin{lemma}
The $k$ shortest paths can be computed in $\mathcal{O}(p \cdot \sum_v \delta_v^- \cdot \delta_v^+ + m \log m + kp)$
\end{lemma}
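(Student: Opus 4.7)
The plan is to adapt Eppstein's $k$-shortest-paths construction to the minimal-angle setting by operating implicitly on the line graph, using the MABF machinery developed in the previous section to supply the single-source shortest-path information that Eppstein's algorithm needs as input. Recall that in the line graph $L(G)$, vertices correspond to edges of $G$, and an arc from $f=(\cdot,u)$ to $e=(u,\cdot)$ carries weight $c(e)+\alpha(\sphericalangle(f,e))$. The line graph has $m$ vertices and $\sum_v \delta_v^-\cdot\delta_v^+$ arcs, and a minimal-angle walk in $G$ is in one-to-one correspondence with a walk in $L(G)$ of the same cost.

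First I would use Algorithm~\ref{alg:alg1} to compute, in $\mathcal{O}\bigl(p \cdot \sum_v \delta_v^-\cdot\delta_v^+\bigr)$ time, the distance labels $D[e]$ from $s$ and the predecessor map $T[e]$ for every edge $e\in E$. This gives the shortest-path tree rooted at $s$ in the line graph (restricted to walks of length at most $p$), which is exactly the input Eppstein's algorithm assumes. Since paths are allowed to be loopy, no further acyclicity argument is needed beyond what Theorem~\ref{theorem:bf} already provides.

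Next I would carry out Eppstein's construction on $L(G)$: for each line-graph vertex, compute the reduced-cost (``sidetrack'') weight of every outgoing arc relative to the tree $T$, and then build the persistent binary heap $H(e)$ over sidetrack edges along the path in $T$ from $e$ to $t$. Each of the $m$ persistent heap operations costs $\mathcal{O}(\log m)$, contributing the $\mathcal{O}(m \log m)$ term. Once the heap-of-heaps is assembled, Eppstein's theorem guarantees that the $k$ shortest walks from $s$ to $t$ in $L(G)$ (equivalently, the $k$ minimal-angle shortest walks in $G$) are in bijection with the $k$ shortest root-to-node paths in this heap structure, each extractable in $\mathcal{O}(\log k)$ amortized time. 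Reconstructing each walk requires tracing its sidetrack sequence and then filling in tree segments from $T$, which costs $\mathcal{O}(p)$ per walk because walks are bounded in length by $p$; summed over $k$ walks this yields $\mathcal{O}(kp)$.

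The main obstacle is verifying that Eppstein's framework transfers intact to the line graph with di-edge costs, because the sidetrack reduction requires that the edge weight of $f\to e$ in $L(G)$ depends only on the pair $(f,e)$ and not on the remainder of the walk. This is in fact the case: the angle cost $\alpha(\sphericalangle(f,e))$ is a local function of the two consecutive edges, so once we view the problem on $L(G)$ all weights become standard arc weights and the reduced-cost transformation $\bar w(f\to e) = w(f\to e) + D[f] - D[e]$ is nonnegative by optimality of $D$. Collecting the three contributions gives the stated bound $\mathcal{O}\bigl(p \cdot \sum_v \delta_v^-\cdot\delta_v^+ + m\log m + kp\bigr)$.
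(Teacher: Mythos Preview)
Your approach is sound and genuinely different from the paper's. You deploy the full Eppstein machinery on the line graph---sidetrack costs, persistent heaps $H(e)$ built by path-copying along the tree, and heap-of-heaps extraction---and correctly observe that the angle cost is local to the pair $(f,e)$, so reduced costs are nonnegative and Eppstein's reduction applies verbatim. Two small slips: Eppstein needs the shortest-path tree \emph{into} $t$, not out of $s$, so you should run Algorithm~\ref{alg:alg1} on the reversed graph from $t$ (same cost, and then your sidetrack formula is well-defined); and the $\mathcal{O}(k\log k)$ extraction is only dominated by $\mathcal{O}(kp)$ under the tacit assumption $\log k = \mathcal{O}(p)$.

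The paper takes a much more elementary route. It runs Algorithm~\ref{alg:alg1} twice---once from $s$ and once from $t$ on the reversed graph---to obtain $D_s,T_s,D_t,T_t$, sets $S[e]=D_s[e]+D_t[e]-c(e)$ (the cost of the best $s$--$t$ walk forced through $e$), sorts $S$ in $\mathcal{O}(m\log m)$, and reconstructs the $k$ paths corresponding to the $k$ smallest entries via $T_s$ and $T_t$ in $\mathcal{O}(kp)$. No persistent heaps, no sidetrack sequences.

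What each buys: your argument actually delivers the $k$ shortest walks, because Eppstein enumerates arbitrary sidetrack sequences. The paper's scheme only enumerates walks that are the concatenation of a $T_s$-tree path to $e$ and a $T_t$-tree path from $e$---in Eppstein's language, walks with at most one sidetrack---so it can repeat the same walk for several of its edges and can miss walks that deviate from the tree in two or more places. For the paper's downstream purpose (seeding a greedy diverse-path selection with many near-optimal candidates) this weaker enumeration is adequate and pleasantly simple; as a proof of the lemma \emph{as stated}, your full-Eppstein route is the rigorous one.
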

\begin{proof}
The proof is given by the following adaptation of Eppstein's algorithm (EA): As in EA, not only the distance $D_s$ and predecessor map $T_s$ for the shortest path tree rooted in $s$, but also the tree rooted in $t$ is computed with reversed edge directions, yielding $D_t$, $T_t$. With algorithm~\ref{alg:alg1} this takes $\mathcal{O}\big(p \cdot \sum_v \delta_v^- \cdot \delta_v^+\big)$ operations. 
Following EA, $D_s $ and $D_t$ are summed up point-wise and the edge costs $c(e)$ are subtracted since they appear twice. The resulting map $S$, with $S[e] = D_s[e] + D_t[e] - c(e)$, now describes the distance of the shortest path through $e$. Then $S$ is sorted $\mathcal{O}(m \log(m))$, and the paths are reconstructed for the edges with the $k$ lowest values in $S$, which can be done in $\mathcal{O}(kp)$ using $T_s$ and $T_t$.
\end{proof}

Enumerating the $k$ shortest paths often leads to highly similar paths. 
Thus, the problem of \textit{diverse} path finding has been introduced by \citet{liu2017finding} as the ''Top-$k$ Shortest Paths with Diversity (KSPD)'', which has been shown to be NP-hard~\cite{chondrogiannis2018finding}. Both \citet{liu2017finding} and \citet{chondrogiannis2015alternative} provide greedy methods to find $k$ paths that are sufficiently diverse with respect to a threshold $\theta$ of a suitable path diversity metric. \citet{akgun2000finding} on the other hand consider the reversed formulation, namely to find the $k$ most diverse paths with a threshold on the path costs, thereby turning the problem into a\textit{ p-dispersion} problem. 
Here, we were first interested to understand for which metrics it is even possible to find \textit{one} diverse path efficiently.

\begin{definition}\label{def:diverseselection}
Given a graph $G=(V,E)$, a path $P$ and a metric $M$, we define the \textbf{diverse path selection problem} as the problem to find an alternative path $\hat{P}$ where the distance exceeds a similarity threshold $\theta$, i.e. $d_M(P, \hat{P})>\theta$.
\end{definition}

We first consider the Yau-Hausdorff distance $d_{Y}$, which is the maximum Euclidean distance between two sets $P$, $Q$, and was shown to be a metric by \citet{tian2015two}.
\begin{gather}
d_{Y}(P,Q) = \max\{\max_{p\in P} \min_{q\in Q} \Vert p-q\Vert, \max_{q\in Q} \min_{p\in P} \Vert p-q\Vert\}
\end{gather}

\begin{lemma}\label{lemma:3}
Diverse path selection according to the Yau-Hausdorff distance is decidable in polynomial time.
\end{lemma}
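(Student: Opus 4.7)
Plan: The Yau-Hausdorff distance $d_Y$ of \citet{tian2015two} differs from the symmetric Hausdorff distance $d_H$ by an additional infimum over translations of one set, namely $d_Y(P,Q)=\min_{t\in\mathbb{R}^2} d_H(V(P)+t,V(Q))$. The plan is to discretize this continuous infimum down to a polynomial-sized set of combinatorially critical translations, reduce each to a Hausdorff-style shortest-path subproblem, and then combine the resulting constraints via a constraint-generation loop so that only polynomially many subproblems are ever solved.

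First I would observe that all vertex coordinates lie on the integer raster $l(V)$ and that, for any candidate path $\hat P$, the function $t\mapsto d_H(V(P)+t,V(\hat P))$ is piecewise linear with breakpoints only at translations of the form $t=l(u)-l(v)$ where $v\in V(P)$ and $u\in V$. The infimum defining $d_Y$ is therefore attained on a finite set $T^\star$ of size $O(|V(P)|\cdot|V|)$, which can be enumerated from the input without knowing $\hat P$. Consequently $d_Y(P,\hat P)>\theta$ is equivalent to the conjunction: for every $t\in T^\star$, $d_H(V(P)+t,V(\hat P))>\theta$.

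Next, a single such constraint $d_H(V(P)+t,V(\hat P))>\theta$ unfolds into the usual Hausdorff disjunction: either some vertex of $\hat P$ lies outside the closed $\theta$-disk around $V(P)+t$, or some translated point in $V(P)+t$ lies outside the $\theta$-disk around $V(\hat P)$. For a fixed $t$ and a fixed witness vertex certifying the active disjunct, this becomes a shortest-path existence query on $G$ with an appropriate forbidden/required mask applied to cells of the raster, which can be answered in polynomial time using Algorithm~\ref{alg:alg1}.

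The hard part will be handling the conjunction over the $|T^\star|$ translations: a naive enumeration of which disjunct and which witness vertex is active for every $t$ is exponential. I would sidestep this with a lazy constraint-generation scheme: start from an initial candidate $\hat P$ (for example, the optimum returned by Algorithm~\ref{alg:alg1}); compute $d_Y(P,\hat P)$ in polynomial time using the algorithm of \citet{tian2015two}; if $d_Y(P,\hat P)>\theta$ return $\hat P$, otherwise extract a violating translation $t^*\in T^\star$ and add its forbidden-region constraint to the graph before re-solving the shortest-path subproblem. Since each iteration eliminates at least one element of $T^\star$ from future consideration and $|T^\star|$ is polynomial, the loop terminates in polynomially many rounds; either a feasible $\hat P$ is produced or infeasibility is certified, yielding a polynomial-time decision procedure.
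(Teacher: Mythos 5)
Your proof attacks a different distance than the one the lemma is about. In this paper $d_Y$ is defined (in the display just above the lemma) as the plain symmetric Hausdorff distance between the two vertex sets, $d_{Y}(P,Q) = \max\{\max_{p\in P} \min_{q\in Q} \Vert p-q\Vert,\ \max_{q\in Q} \min_{p\in P} \Vert p-q\Vert\}$ --- there is no infimum over translations. Your entire construction (the critical translation set $T^\star$, the conjunction over translations, the lazy constraint generation) is machinery for a translation-invariant variant that the statement does not require, and it obscures the fact that the untranslated problem has a one-line reduction. Because $d_Y$ is a maximum of two directed terms, $d_Y(P,\hat P)>\theta$ already follows whenever a \emph{single} vertex $u$ of $\hat P$ satisfies $\min_{w\in P}\Vert u-w\Vert>\theta$. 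So after computing the forward and backward shortest-path trees (as in the adaptation of Eppstein's algorithm), one simply restricts attention to edges $e=(u,v)$ whose endpoint is farther than $\theta$ from $P$ and takes the one minimizing the through-edge distance $S[e]=D_s[e]+D_t[e]-c(e)$; if such an edge with finite $S[e]$ exists, the reconstructed path is a diverse (indeed shortest diverse) path, and if none exists the directed term $\max_{p\in\hat P}\min_{q\in P}$ cannot exceed $\theta$ for any path. That is the paper's whole argument, and it is a single pass over the edges.

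Even taken on its own terms, your argument has two concrete flaws. First, the claimed discretization is false: for fixed point sets, $t\mapsto\Vert p+t-q\Vert$ is a smooth convex function of the translation, not piecewise linear, so the minimizer of $t\mapsto d_H(V(P)+t,V(\hat P))$ is generally attained where several pairwise distances coincide, not at lattice differences $l(u)-l(v)$; the finite set $T^\star$ you enumerate need not contain the optimum. Second, the constraint-generation loop is not sound: the Hausdorff condition for each $t$ is a disjunction over witness vertices, and adding a ``forbidden-region'' mask that enforces one particular disjunct over-constrains the search, so the loop can declare infeasibility while a diverse path exists. Neither the claim that each round eliminates an element of $T^\star$ nor the claim that feasibility is preserved is justified.
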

\begin{proof}
After computing both shortest path trees, one pass over all edges is required to determine which vertices have sufficient Euclidean distance to the previous path(s) $P_{prev}$. Note that in order to solve the diverse path selection problem, \textit{any} diverse path is sufficient, but with Eppstein's algorithm we can also find the \textit{shortest} diverse path by finding the sufficiently diverse path with minimal $S[e]$:
\begin{gather*}
    \displaystyle e^* = \argmin_{e=(u,v):\ \min_{w\in P_{prev}} \Vert u - w \Vert > \theta} S[e].
\end{gather*}
If $e^* =(u^*,v^*)$ exists and has non-infinity path distance, the shortest path $P^*$ through $e^*$ can be computed from $T_s$ and $T_t$. 
The Yau-Hausdorff distance of $P^*$ to $P_{prev}$ is the maximum distance of all vertices of $P^*$, and thus at least as large as the distance of $u^*$ to $P$:
\begin{gather*}
    d_{Y}(P^*,P_{prev}) \geq \max_{p\in P^*} \min_{q\in P_{prev}} \Vert p-q\Vert \geq \min_{q\in P_{prev}} \Vert u^*-q\Vert
\end{gather*}
Conversely, if no such edge exists then clearly there is no path with sufficient Yau-Hausdorff distance, because the maximum distance must be attained at some vertex.
\end{proof}

Lemma~\ref{lemma:3} thus suggests an efficient \textit{greedy} algorithm to compute $k$ diverse shortest paths with respect to $d_Y$. The runtime is given by computing the shortest path trees and subsequently computing the path distance ($\mathcal{O}(kp)$) for all $n$ vertices for $k$ times, resulting in  $\mathcal{O}(mpd + k^2np)$.\\
On the other hand, despite the proposed methods and promising computational results in \cite{chondrogiannis2018finding,liu2017finding}, we 
find that diverse path selection is NP-hard for the Jaccard distance metric $d_J$ and the mean Euclidean distance $d_M$, 
\begin{gather}
    \displaystyle d_M(P,Q) = \max\{\frac{1}{|P|}\sum_{p\in P} \min_{q\in Q} \Vert p-q\Vert, \frac{1}{|Q|}\sum_{q\in Q} \min_{p\in P} \Vert p-q\Vert\}
\end{gather}

\begin{lemma}
With the Jaccard distance or the average Euclidean distance metric, the diverse path selection problem is NP-hard.
\end{lemma}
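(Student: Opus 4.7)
The plan is to prove NP-hardness of both variants by polynomial reduction from LONGEST PATH: given a graph $H$, vertices $s, t$, and integer $k \geq 2$, decide whether $H$ contains a simple $s$-$t$ path using at least $k$ vertices. This decision problem is well known to be NP-hard in general graphs.

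First, for the Jaccard distance we construct $G$ from $H$ by adding the direct edge $(s, t)$ (if absent) and take $P$ to be the one-edge path $s \to t$, so that $P = \{s, t\}$ as a vertex set. Any simple $s$-$t$ path $\hat{P} \neq P$ in $G$ must avoid that direct edge and is therefore a simple $s$-$t$ path in $H$. Since $P \subseteq \hat{P}$ in that case, we have $|P \cap \hat{P}| = 2$ and $|P \cup \hat{P}| = |\hat{P}|$, hence $d_J(P, \hat{P}) = 1 - 2/|\hat{P}|$. Choosing $\theta := 1 - 2/(k-1)$ makes $d_J(P, \hat{P}) > \theta$ equivalent to $|\hat{P}| \geq k$, i.e., exactly the LONGEST PATH question.

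For the average Euclidean distance $d_M$, we reuse the same combinatorial construction and additionally embed the vertices in the plane: place $s$ at the origin, $t$ at $(1, 0)$, and the remaining vertices of $H$ inside a disk of radius $\epsilon$ around $(M, 0)$ for parameters chosen below. Because $s, t$ lie in both $P$ and every admissible $\hat{P}$, the first term of $d_M$ vanishes. The second term equals $\frac{1}{|\hat{P}|} \sum_{v \in \hat{P} \setminus \{s, t\}} \min\{\Vert v - s \Vert, \Vert v - t \Vert\}$, which lies in $[M(1 - 2/|\hat{P}|) - \epsilon,\, M(1 - 2/|\hat{P}|) + \epsilon]$. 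Picking $\theta$ slightly above $M(1 - 2/(k-1))$ and $\epsilon$ sufficiently small turns $d_M(P, \hat{P}) > \theta$ into $|\hat{P}| \geq k$, yielding an analogous reduction.

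The main obstacle is the geometric encoding for $d_M$: we must guarantee that the averaging does not blur the integer boundary between path lengths $k-1$ and $k$. This is handled by choosing parameters in the order $\epsilon \ll 1/k^{2} \ll M$, so the error introduced by the disk radius is strictly smaller than the gap between $M(1 - 2/(k-1))$ and $M(1 - 2/k)$. The Jaccard reduction is purely combinatorial and does not face this difficulty. In both cases correctness relies on $\hat{P}$ being a \emph{simple} path, so that $|\hat{P}|$ genuinely counts distinct vertices as in LONGEST PATH.
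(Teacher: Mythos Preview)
Your argument is correct and takes a genuinely different, more elementary route than the paper. The paper reduces from \textsc{Hamiltonian Path}: given an input graph $G$ on $n$ vertices, it attaches a parallel chain $v'_1,\dots,v'_n$ (with every vertex of $G$ joined to the two middle chain vertices), sets $P=(v'_1,\dots,v'_n)$, and uses a \emph{fixed} threshold ($\theta=0.5$ for Jaccard, $\theta=1$ for $d_M$ with a specific planar layout) so that any sufficiently diverse alternative must detour through all of $G$. Your reduction from \textsc{Longest Path} is structurally far simpler---add a single edge and let $P$ be the two-vertex path $\{s,t\}$---at the price of an instance-dependent threshold. What the paper's approach buys is the slightly sharper statement that hardness already holds at one particular value of $\theta$; what yours buys is a near-trivial gadget and a proof that works uniformly for all thresholds of the form $1-2/(k-1)$.

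One small numerical slip in your $d_M$ computation: the nearest point of $P$ to a cluster vertex is $t=(1,0)$, not $s$, so each of the $|\hat P|-2$ cluster vertices contributes approximately $M-1$ rather than $M$. The correct interval is $\big[(M-1)(1-2/|\hat P|)-\epsilon,\ (M-1)(1-2/|\hat P|)+\epsilon\big]$. This does not affect the argument---simply center $\theta$ between $(M-1)(1-2/(k-1))$ and $(M-1)(1-2/k)$ and choose $\epsilon$ smaller than half that gap, which is of order $(M-1)/k^2$.
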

\begin{proof}
Given a graph $G=(V,E)$ and $n=|V|$, we construct an auxiliary graph $H=(V^{'}, E^{'})$ as in \autoref{fig:eucldisthard}: 
\begin{gather*}
V^{'} = V \cup \{v^{'}_i\ |\ i\in [1..n]\} \\
E^{'} = E\cup \Big\{(v^{'}_i, v^{'}_{i+1})\ |\ i\in [1..n-1]\Big\}
\\ \cup
\Big\{(v_i, v^{'}_{\big\lfloor\frac{n}{2}\big\rfloor}) \ |\ \forall i\in [1..n]\Big\}
\cup \Big\{(v_i, v^{'}_{\big\lceil\frac{n+1}{2}\big\rceil}) \ |\ \forall i\in [1..n]\Big\}
\end{gather*}

$G$ can be any graph, but here we arrange all its vertices in a straight line in an Euclidean space, and place $v^{'}_1\dots v^{'}_{n}$ in a line parallel to $v_1\dots v_{n}$ such that $d_M(v_i, v^{'}_i) = 2\ \ \forall i$.

We claim that the Hamiltonian problem can be decided for $G$ if it can be decided whether there exists a diverse alternative to $P = (v^{'}_1, v^{'}_2, \dots, v^{'}_n)$ (horizontal green line in \autoref{fig:eucldisthard}). The threshold for the minimum Jaccard distance is set to $\theta = 0.5$. Observe that all vertices of $P$ will also be part of the alternative path, and only the edges from $v^{'}_{\big\lceil\frac{n+1}{2}\big\rceil}$ and $v^{'}_{\big\lfloor\frac{n}{2}\big\rfloor}$ allow the path to traverse other vertices than $P$. The Jaccard distance corresponds to the inverted IoU, so $\theta = 0.5$ requires the path to use at least $|P|$ new vertices. But since $|P| = n$, the only possible alternative is a path that visits all vertices of $G$, i.e. a Hamiltonian path in $G$. If no alternative is found, it is implied that there is no Hamilton path because the connections from $v^{'}_{\big\lceil\frac{n+1}{2}\big\rceil}$ and $v^{'}_{\big\lfloor\frac{n}{2}\big\rfloor}$ to \textit{all} vertices of $G$ ensure that it would be found if existent.

Similarly, with respect to the mean-Eucledian distance $d_M$ we set $\theta = 1$. 
If $k$ vertices of $G$ are used ($k \leq n$), then 
$$d(P, P^*) = \frac{n \cdot 0 + k\cdot 2}{n+k} \implies (d(P, P^*) =1 \iff k=n)$$ 
Thus, a diverse alternative exists iff a Hamiltonian path exists.
\begin{figure}
    \centering
    \includegraphics[width=0.8\textwidth]{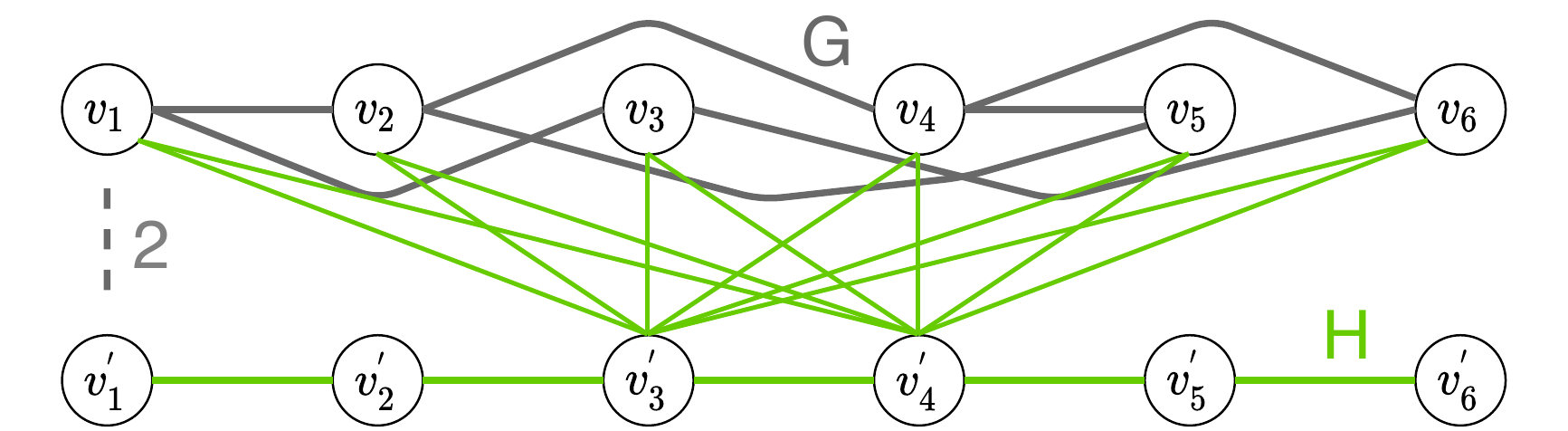}
    \caption{NP-hardness of finding the shortest path with a lower bound on the mean Euclidean distance or the Jaccard distance. If a diverse path exists, G contains a Hamilton path.}
    \label{fig:eucldisthard}
\end{figure}
\end{proof}
Nevertheless, one can still utilize Eppstein's algorithm to enumerate paths sorted by distance, and then select the ones greedily that have sufficient average Euclidean or Jaccard distance to the previously chosen paths, similarly to the FindKSP algorithm by \citet{liu2017finding}. We therefore implemented and compared five computational methods:
\begin{itemize}
    \item \textbf{find-ksp-max}: greedy path selection according to $d_Y$, guaranteed to give the correct result in polynomial time
    \item \textbf{find-ksp-mean}: greedy path selection according to $d_M$
    \item \textbf{greedy-set}: greedy path selection according to $d_J$
    \item \textbf{k-dispersion}: 2-approximation~\cite{ravi1991facility} for p-dispersion problem, i.e. find most diverse set of paths given a threshold on the cost (similar to \cite{akgun2000finding})
    \item \textbf{corridor-penalizing}: Soft variant of \textbf{find-ksp-max}, where the vertices that are close to the previous paths are only penalized instead of excluded
\end{itemize}

Examples of five diverse minimal-angle and least resistance paths, computed with \textbf{find-ksp-max}, are shown in \autoref{fig:ksp_example}.

\section{Computational results}

In collaboration with the software company Gilytics we evaluate our methods on two transmission line planning projects, labeled \textbf{instance~1} and \textbf{instance~2} in the following. \textbf{Instance~1} is an exemplary project in Austria with GIS data publicly available\footnote{\url{http://www.noe.gv.at/noe/Karten-Geoinformationen/DownloadGeodatenKarten.html}} (resistance raster shown in \autoref{fig:ksp_example}). \textbf{Instance~2} is a former project in Belgium\footnote{\url{https://www.elia.be/en/infrastructure-and-projects/infrastructure-projects/auvelais-gembloux}} for which data was extracted from \textit{OpenStreetMap}. The resistances and an optimal path is shown in \autoref{fig:overview}. For both instances, realistic resistance values were decided together with Gilytics. Further details on the instances are given in \autoref{tab:instances}. 
\begin{table}[h]
\centering
\resizebox{0.8\textwidth}{!}{%
\begin{tabular}{c|ccccccc}
\textbf{Instance} & \textbf{\begin{tabular}[c]{@{}c@{}}Raster\\ width\end{tabular}} & \textbf{\begin{tabular}[c]{@{}c@{}}Raster\\ height\end{tabular}} & \textbf{\begin{tabular}[c]{@{}c@{}}Project region\\  (not forbidden)\end{tabular}} & \textbf{d\_min} & \textbf{d\_max} & \textbf{\begin{tabular}[c]{@{}c@{}}neighbours\\ per node\end{tabular}} \\ \hline
\textbf{1}        & 4485                                                            & 3206          & \begin{tabular}[c]{@{}c@{}} 7 358 107 cells (51\%) \\ 735.8 km$^2$ \end{tabular}                                                                     & 25              & 35              & 967                                                                \\

\textbf{2}        & 1300                                                            & 739           & \begin{tabular}[c]{@{}c@{}}857 350 cells (89\%) \\ 85.7 km$^2$ \end{tabular}                                                                     & 15              & 25              & 632                                                                    
\end{tabular}%
}
\caption{Instance properties. All numbers refer to a raster of 10x10m cells.}
\label{tab:instances}
\end{table}
\\ All experiments were executed on a compute node equipped with two 64-core AMD EPYC 7742 processors (2.25 GHz nominal, 3.4 GHz peak) and 512 GB of DDR4 memory, clocked at 3200 MHz. In consultation with the collaborator it was decided to assume a \textit{directed acyclic graph (DAG)} according to the criterion in \autoref{eq:edgecost}. Note that all proposed algorithms are also applicable for a general graph.

\subsection{Feasibility of large project regions}
While our general modelling approach is similar to~\cite{santos2019optimizing, piveteau2017novel}, our framework allows to process much larger instances due to efficient algorithms omitting the need for larger graph representations such as a line graph. Specifically, for a single path computation, the required memory mainly stems from the storage of distance and predecessor maps, together comprising $2m$ elements. For finding $k$ paths on the other hand, $4m$ elements are kept in RAM since two shortest path trees are computed. With respect to actual memory, distances and predecessors are represented as integer or floating points of $8$ bytes, resulting in $2m \cdot 8 = 16m$ bytes or $32m$ bytes respectively. In practice, $m$ depends on the project region and $d_{min}$ and $d_{max}$. Here, processing \textbf{instance~2} with $m \approx 857350 \cdot 632$ edges (see \autoref{tab:instances}) requires around $8.7$GB, but in our framework only edges reachable from the source are stored, such that the optimal path and even the $k$ shortest paths can be computed with a standard 8GB RAM. In contrast, if \textbf{instance~2} was processed with a line graph~\cite{santos2019optimizing, piveteau2017novel}, the number of edges and thus also the required memory would increase by a factor of 632. This demonstrates the large gain in efficiency achieved with our framework. On top of that, \textit{any} conceivable instance can be processed with our multi-scale approach. \autoref{fig:ksp_example} depicts $k$ shortest paths on \textbf{instance~1}, computed with the multi-scale approach, which would otherwise require $32 \cdot 7358107 \cdot 967 \approx 227$GB memory

\subsection{Comparison to an existing transmission line}
For \textbf{instance~1}, we extracted the pylon positions of an existing transmission line, denoted by $Q$ in the following. In \autoref{fig:ksp_example}, $Q$ is shown in contrast to five paths $P_1, \dots, P_5$ computed with our framework. Although we naturally do not know which geographic features were considered in the planning of $Q$, we can analyze $Q$ with respect to our data, as it is safe to assume that similar resistances were regarded when planning $Q$.  
\autoref{fig:costs} demonstrates that $Q$ is crossing many high resistance areas such as bird sanctuaries or even areas that are forbidden according to our data (buildings, nature reserves), while $P_1$ tends to follow roads and suitable regions around forest.
Even when \textit{omitting} costs for pylons that traverse forbidden regions, we find that the geographic resistances are reduced by 25\%, together with an 8\% reduction of angles, rendering $P_1$ a strictly better route. 
\begin{figure}[t]
\begin{subfigure}[t]{0.5\textwidth}
\vskip 0pt
    \includegraphics[width=\textwidth]{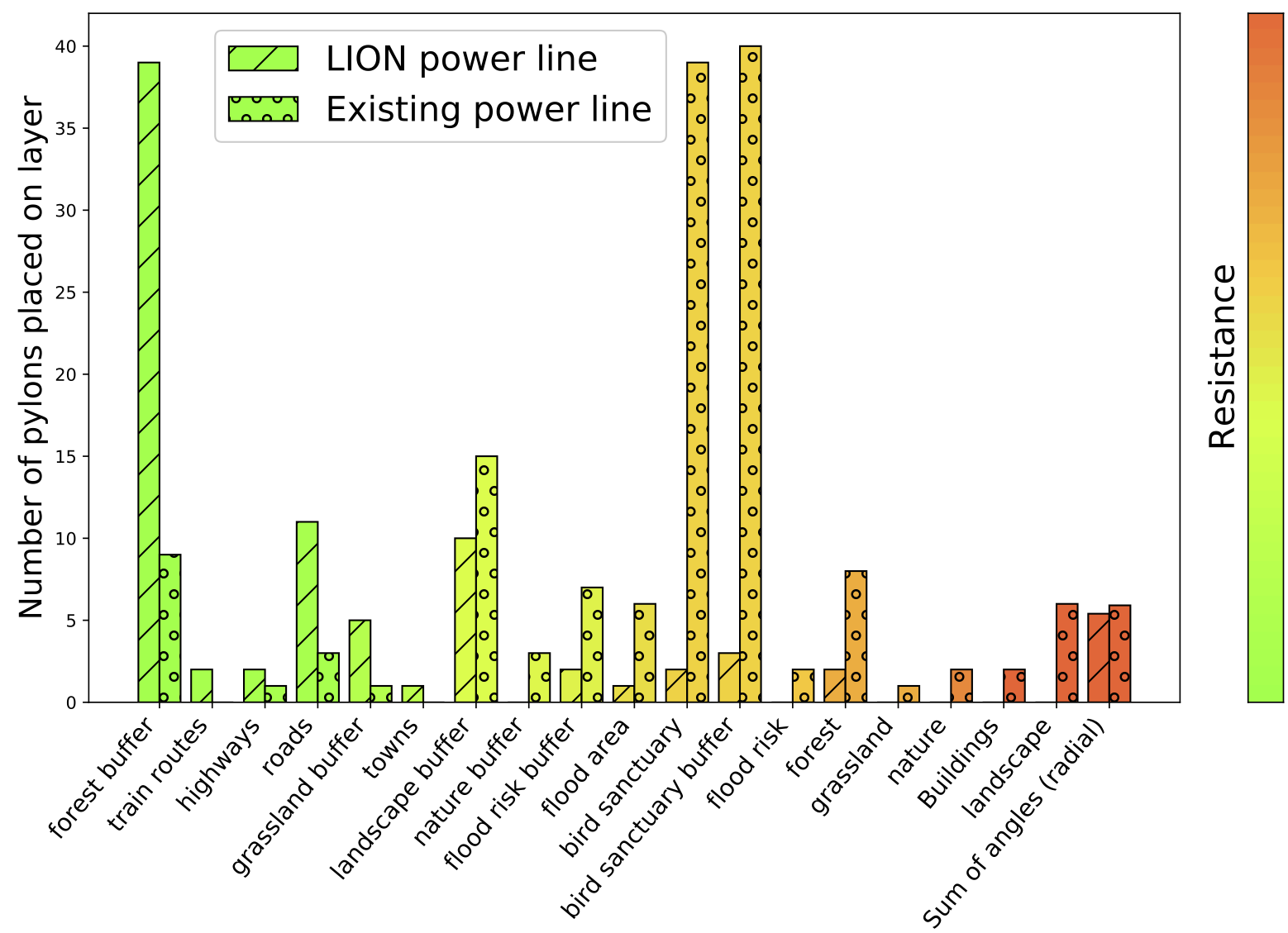}
    \caption{Comparison of resistance of the optimal path computed with LION and the existing power line. 
    Clearly, the existing power line is traversing areas of higher resistances than our path.}
    \label{fig:costs}
\end{subfigure}
\hfill
\begin{subfigure}[t]{0.48\textwidth}
\vskip 0pt
    \includegraphics[width=\textwidth]{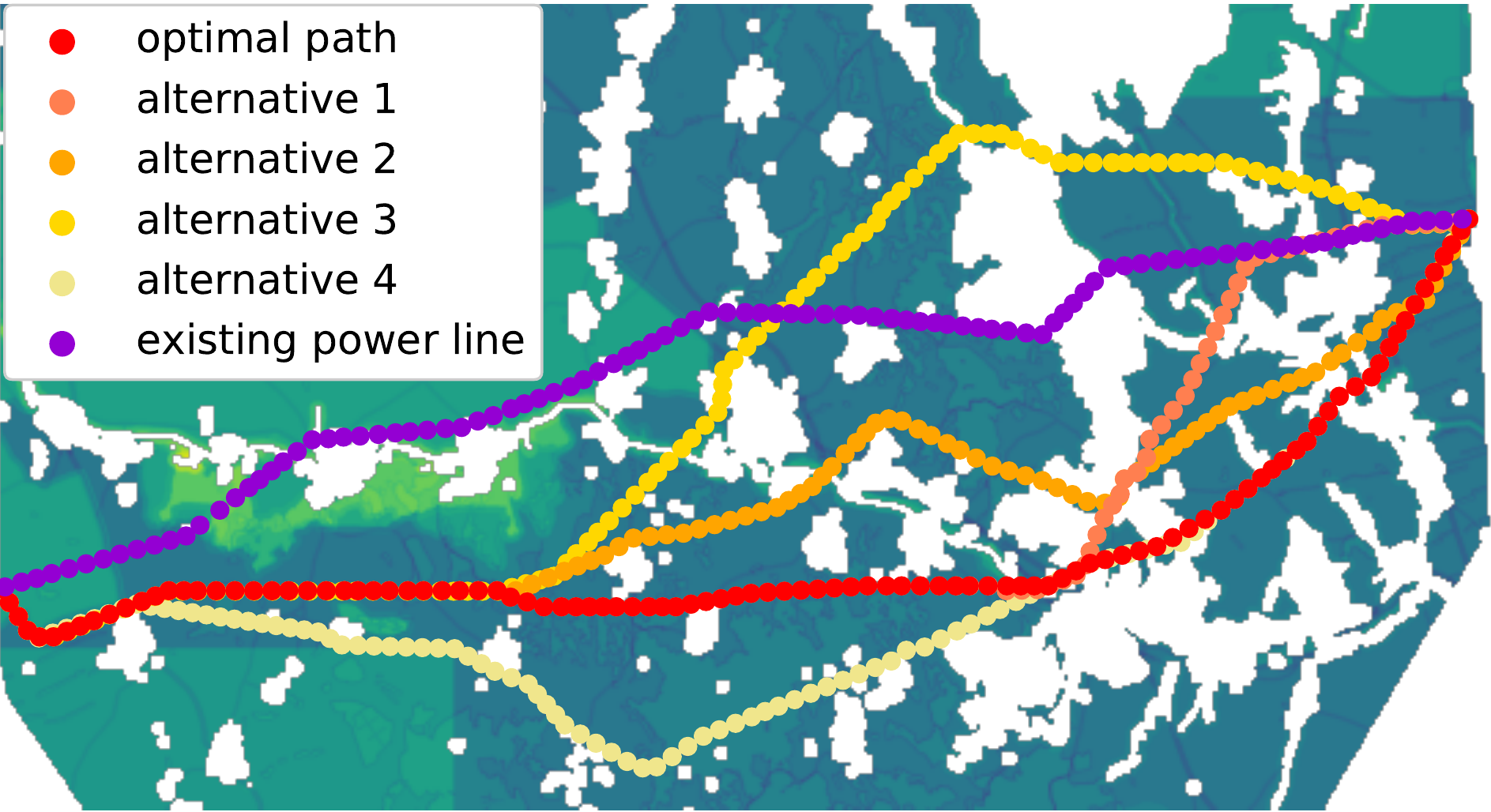}
    \vspace{2.8em}
    \caption{K diverse shortest paths for instance 1 (cropped) are shown, computed with the \textbf{find-ksp-max} method. Dark areas indicate low resistances, brighter areas higher resistance, and white areas are forbidden. High resistance areas are avoided by our paths in contrast to the existing power line}
    \label{fig:ksp_example}
\end{subfigure}
\caption{Qualitative and quantitative comparison of our results with existing infrastructure}
\end{figure}

\subsection{Pylon spotting versus line routing}
As explained above (\autoref{fig:graphbuild}), most related works~\cite{monteiro2005gis, vega1996image, west1997terrain, li2013optimization} compute a \textit{route} through the resistance raster and place transmission towers as a second step (called \textit{line-routing} in the following), instead of computing a globally optimal \textit{pylon spotting}. In \autoref{tab:baseline}, we compare the outputs of our pylon spotting to the line-routing approach. Specifically, we simply apply our algorithm on the graph with $d_{min}=1, d_{max}=1.5$, i.e. a grid-structured graph where each cell is connected to its 8-neighborhood. When the angle weight was greater than zero, we reduced also the angles of the line-route with our algorithm to provide a fair comparison of the angle costs. Based on this route, the pylons are spotted, where we even minimize the resistances again with our algorithm, but limiting the project region to locations along the route (\autoref{fig:prior}).
\begin{table}[ht]
\centering
\resizebox{0.8\textwidth}{!}{%
\begin{tabular}{ccccccccc}
\begin{tabular}[c]{@{}c@{}} \textbf{Instance} \\ (resolution) \end{tabular}
                   &
\multicolumn{1}{l}{\begin{tabular}[c]{@{}c@{}} \textbf{Angle} \\ \textbf{weight} \end{tabular}}
& 
\textbf{Approach}
& 
\multicolumn{1}{l}{\begin{tabular}[c]{@{}c@{}} \textbf{Time} \\ \textbf{(seconds)} \end{tabular}} 
& \multicolumn{1}{l}{\begin{tabular}[c]{@{}c@{}}\textbf{m (|E|)}\\ \textbf{(mio)}\end{tabular}} & \multicolumn{1}{l}{\begin{tabular}[c]{@{}c@{}} \textbf{Angle} \\ \textbf{cost} \end{tabular}} & \multicolumn{1}{l}{\textbf{Resistance}} & \multicolumn{1}{l}{
\begin{tabular}[c]{@{}c@{}}\textbf{Distance} \\ \textbf{(in m)}\end{tabular}} &                      \\ \hline 

\cellcolor[HTML]{F5F5F5}                                   & \cellcolor[HTML]{F5F5F5}                               & \cellcolor[HTML]{F5F5F5} \textbf{Line-routing} & 
\cellcolor[HTML]{F5F5F5} \textbf{34.1}  & 
\cellcolor[HTML]{F5F5F5} 3.6   &
\cellcolor[HTML]{F5F5F5} 27.0 &
\cellcolor[HTML]{F5F5F5} 24.3  &
\cellcolor[HTML]{F5F5F5}    \\


\cellcolor[HTML]{F5F5F5}                                   & \multirow{-2}{*}{\cellcolor[HTML]{F5F5F5}\textbf{0.0}} & \textbf{Ours}     & 197.4 & 220 & \textbf{22.8} & \textbf{20.5}  & 823  \\

\cellcolor[HTML]{F5F5F5}            &                               & 
\cellcolor[HTML]{F5F5F5}\textbf{Line-routing} & 
\cellcolor[HTML]{F5F5F5} \textbf{35.6}  & 
\cellcolor[HTML]{F5F5F5} 3.6   &
\cellcolor[HTML]{F5F5F5} 25.4 & 
\cellcolor[HTML]{F5F5F5} 25.4  & 
\cellcolor[HTML]{F5F5F5}    \\

\cellcolor[HTML]{F5F5F5}                                   & \multirow{-2}{*}{\textbf{0.1}} & \textbf{Ours}     & 196.8 & 220 & \textbf{7.7}  & \textbf{20.8}  & 943  \\

\cellcolor[HTML]{F5F5F5}                                   & \cellcolor[HTML]{F5F5F5}                               & \cellcolor[HTML]{F5F5F5}\textbf{Line-routing} & 
\cellcolor[HTML]{F5F5F5} \textbf{33.1}  &
\cellcolor[HTML]{F5F5F5} 3.6   &
\cellcolor[HTML]{F5F5F5} 17.9 & 
\cellcolor[HTML]{F5F5F5} 28.2  & 
\cellcolor[HTML]{F5F5F5}    \\
\multirow{-6}{*}{\cellcolor[HTML]{F5F5F5}\begin{tabular}[c]{@{}c@{}}\textbf{Inst. 1}\\ (20 m)\end{tabular}} & \multirow{-2}{*}{\cellcolor[HTML]{F5F5F5}\textbf{0.3}} & \textbf{Ours}     & 208.2 & 220 & \textbf{2.6} & \textbf{21.6 } & 1414 \\
\hline
&                                & \cellcolor[HTML]{F5F5F5} \textbf{Line-routing} &
\cellcolor[HTML]{F5F5F5}\textbf{23.8}  &
\cellcolor[HTML]{F5F5F5}2.2   & 
\cellcolor[HTML]{F5F5F5} \textbf{8.0} & \cellcolor[HTML]{F5F5F5}
109.9 &\cellcolor[HTML]{F5F5F5}    \\

& \multirow{-2}{*}{\textbf{0.0}} & \textbf{Ours}     & 594.0 & 286 & 11.0 & \textbf{98.6} & 263  \\

   & \cellcolor[HTML]{F5F5F5}                               & \cellcolor[HTML]{F5F5F5}\textbf{Line-routing} & 
   \cellcolor[HTML]{F5F5F5}\textbf{21.9}  & 
   \cellcolor[HTML]{F5F5F5}2.2   & 
   \cellcolor[HTML]{F5F5F5}9.6  & 
   \cellcolor[HTML]{F5F5F5}118.4 & 
   \cellcolor[HTML]{F5F5F5}   \\

  & \multirow{-2}{*}{\cellcolor[HTML]{F5F5F5}\textbf{0.1}} & \textbf{Ours}     & 588.5 & 286 &\textbf{2.7}  & \textbf{101.5 } & 482  \\

 &                               & \cellcolor[HTML]{F5F5F5}\textbf{Line-routing} &
 \cellcolor[HTML]{F5F5F5}\textbf{20.0}  & 
 \cellcolor[HTML]{F5F5F5}2.2   & 
 \cellcolor[HTML]{F5F5F5}6.8  & 
 \cellcolor[HTML]{F5F5F5}122.7 & 
 \cellcolor[HTML]{F5F5F5}    \\
 
\multirow{-6}{*}{\begin{tabular}[c]{@{}c@{}}\textbf{Inst. 2}\\ (10 m) \end{tabular}} & \multirow{-2}{*}{\textbf{0.3}} & \textbf{Ours}     & 583.4 & 286 & \textbf{1.2}  & \textbf{105.4} & 498

\end{tabular}
}
\caption{Comparison of pylon-spotting to line routing (best value marked bold). ''Distance'' is the average distance between the pylons of our approach compared to the baseline. Although our approach leads to much larger graphs, the optimal path can still be computed in less than 10 minutes and in all cases leads to significantly lower angle and resistance costs.}
\label{tab:baseline}
\end{table}

Although our graphs are larger than grid-graphs by a few orders of magnitude, the instances can still be processed in less than 10 minutes\footnote{\textbf{instance~1} was down sampled to 20m resolution to allow for optimal path computation without the multi-scale approach}. The increase in runtime is, however, rewarded by the achieved reduction in costs: Resistances decrease by 10-20\% while the angles are reduced by more than 50\%. A major drawback of line-routing is also that minimizing the angle along the route does not necessarily reduce the angles at the pylons, but can in fact result in larger angles despite penalizing angles more (see \textbf{instance~2} angle weight 0.1). Furthermore, the routes are substantially different (see ''distance'' in \autoref{tab:baseline}), such that a local search around the route does not suffice to determine the optimal path.
\subsection{Efficiency of multi-scale processing}
We explored in different configurations how much the space and time could be reduced by our iterative multi-scale method (\autoref{sec:multiscale}), and at what cost. In \autoref{fig:pipeline_eval} the performances for \textbf{instance~2} are reported for the scale lists $[2,1], [3,1]$ and $[3,2,1]$, meaning for example that the instance is down-sampled to 30x30m resolution at first, then to 20x20m and last to 10m. The bars show the \textit{relative} increase or decrease of runtime, space and resistance, always compared to the direct computation ($[1]$).
\begin{wrapfigure}[19]{r}{10cm}
    \centering
    \vspace{1em}
    \includegraphics[width=0.55\textwidth]{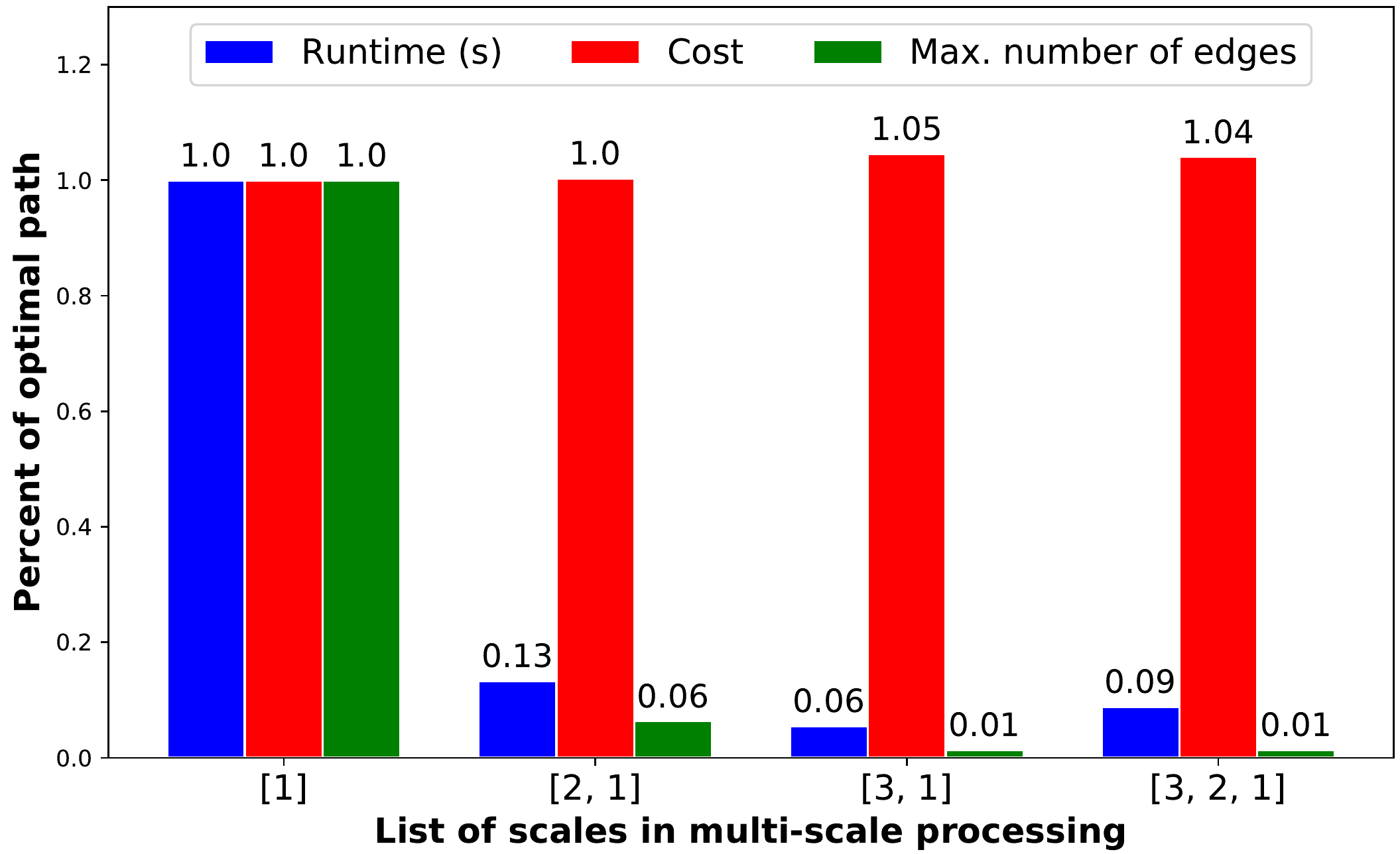}
    \label{fig:pipeline_eval_all}
    \caption{Evaluation of iterative multi-scale methods to reduce time and space requirements. The outputs were computed for three instances with four different list of scales ([1] refers to a direct optimization in one iteration). Space and runtime decrease significantly for all instances when using the iterative procedure, and the resistances increase only marginally as desired.}
    \label{fig:pipeline_eval}
\end{wrapfigure}
Importantly, the path resistances only increase by less than 5\%, while in some cases (e.g. $[2,1]$), the optimal path is found. Importantly, space time complexity is reduced dramatically due to the quartic dependence of the number of edges on the raster resolution. Specifically, down-scaling by factors $2$ and $3$ decreases the number of edges by factors $16$ and $81$, respectively. The overall runtime in all cases was below 13\% of the original runtime. 
Furthermore, we observed that having three scales, instead of two (e.g. $[3,2,1]$ vs. $[3,1]$) increases runtime, but also leads to slightly lower path resistances, indicating that more steps improve the ability to approximate the optimal path. These results, summarized in \autoref{fig:pipeline_eval}, demonstrate the potential of the multi-scale approach to find close-to-optimal paths at low computational costs.
%
\vspace{4em}
\subsection{Diversity-cost trade-off for k diverse shortest paths}
%
In section~\ref{ksp} several methods to compute the $k$ diverse shortest paths were discussed. 
Here we apply these methods on \textbf{instance~2} and vary the diversity threshold, observing the ability of each method to achieves high diversity at low path costs. 
Each point in \autoref{fig:kspcompare} corresponds to a set of 5 paths, and the diversity of these paths is plotted against the sum of their respective path costs. 
Optimally the points should be located in the lower right corner, such that they minimize costs but maximize diversity in the set of paths.\\ As expected, \textbf{find-ksp-max} and \textbf{corridor-penalizing} obtain best results with respect to the Yau-Hausdorff metric, while \textbf{greedy-set} and \textbf{k-dispersion} algorithms achieve high Jaccard distance at low costs. 
In short, guarantees are only given with \textbf{find-ksp-max}, but other algorithms such as \textbf{greedy-set} are shown to be effective and should be employed dependent on the diversity metric.

\begin{figure}[ht]
    \centering
    \includegraphics[width=0.8\textwidth]{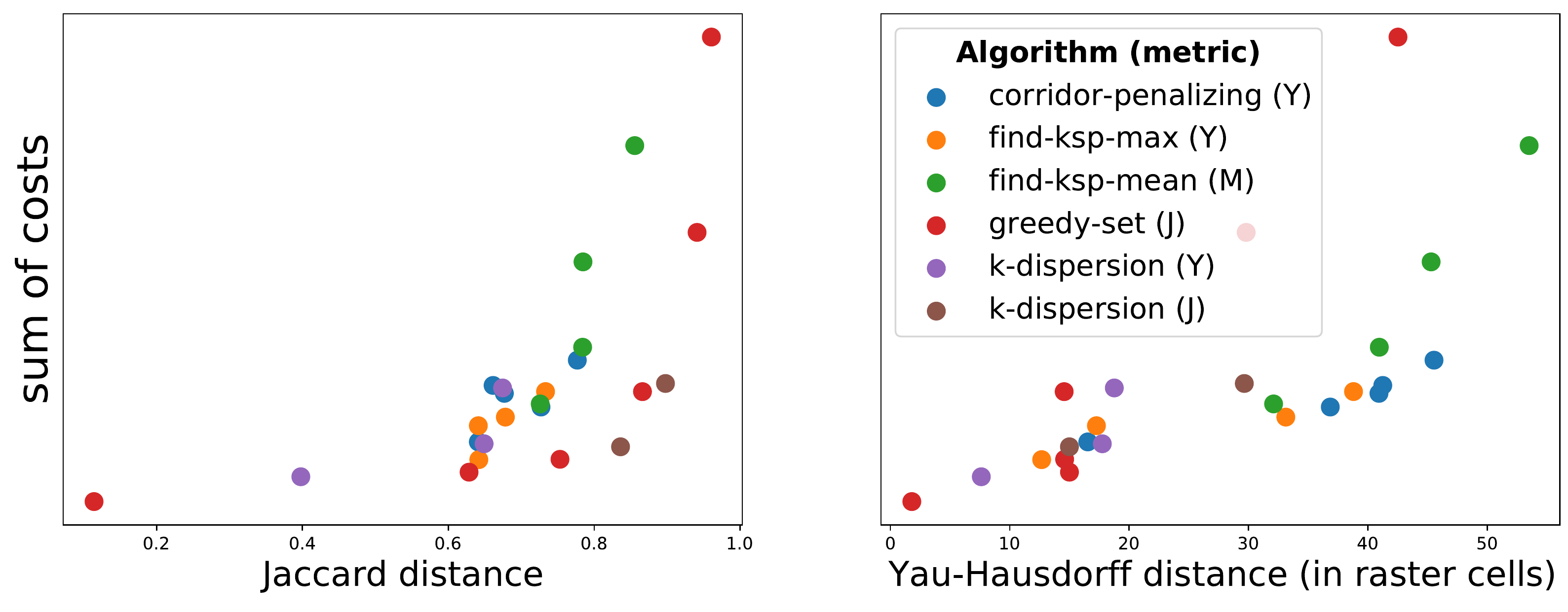}
    \caption{Diversity-cost trade-off of various algorithms (\textbf{instance~2}). Each point refers to a set of five paths that were computed with a certain algorithm and metric (Y - Yau-Hausdorff, J - Jaccard, M - Mean Euclidean distance). The \textbf{greedy-set} method minimizes the Jaccard distance, while \textbf{corridor-penalizing} or \textbf{find-ksp-max} perform best for Euclidean metrics.}
    \label{fig:kspcompare}
\end{figure}
\vspace{3em}

\section{Discussion}
 
The layout of power transmission lines is a contentious topic with many stakeholders involved, calling for software and mathematical methods to support the planning process. 
In contrast to previous work, we presented an efficient framework for the \textit{global optimization} of transmission line layout, for computing (angle-) optimal pylon spotting. On the one hand, we view our methods as a mathematically sound way to propose a route that is optimal with respect to the input parameters, while on the other hand it can also be seen as a decision support system for planners, with its ability to output diverse shortest paths for comparison.   

While our experiments show the system's ability to reduce resistances and angle costs compared to previous work, further work should aim at accelerating the proposed algorithms with approximation algorithms or parallelization. Many further challenges remain, including for example the optimization of pylon heights in more complex (non-flat) terrain.
\begin{wrapfigure}[15]{r}{9.5cm}
    \centering
    \includegraphics[width=0.55\textwidth]{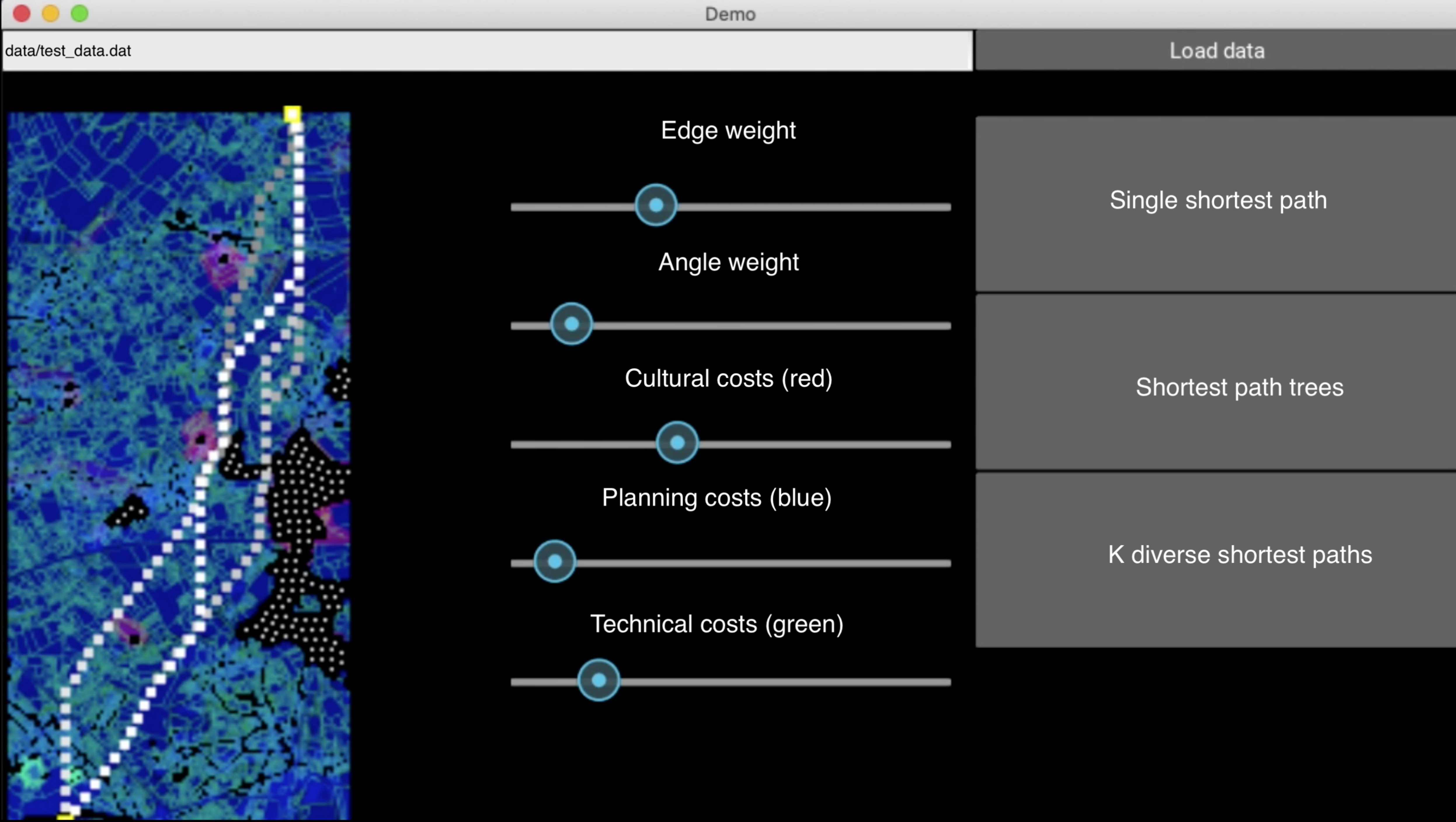}
    \caption{User interface for power infrastructure planning}
    \label{fig:ui}
\end{wrapfigure}
Alongside a \texttt{PyPi} package \textit{lion-sp}, we also release a small graphical user interface\footnote{Installation instructions available on GitHub \url{https://github.com/NinaWie/PowerPlanner}}, shown in \autoref{fig:ui}, 
demonstrating how a decision support tool can be created by adding appropriate UI/UX components.
Moreover, the proposed methodology be be extended beyond transmission line planning. Other applications such as the layout of pipes, underground transmission lines, or highways could benefit from the same optimization framework.
We thus hope to pave the road towards software-assisted linear infrastructure planning, backed by proven mathematical methods, 
for easier collaboration between local residents, planning agencies, environmentalists and construction authorities. 

\section*{Acknowledgments}

We are very grateful for the fruitful collaboration with Gilytics AG, and want to deeply thank them for their support in collecting the relevant data and understanding the background of the problem, and for their extremely helpful feedback on our methods. Special thanks to  Stefano Grassi for his contribution.\\
Finally, we also want to thank Rico Zenklusen for his support, as well as Jordi Castells for his great help on improving the LION package.

\bibliography{references}

\begin{thebibliography}{32}
\providecommand{\natexlab}[1]{#1}
\providecommand{\url}[1]{\texttt{#1}}
\expandafter\ifx\csname urlstyle\endcsname\relax
  \providecommand{\doi}[1]{doi: #1}\else
  \providecommand{\doi}{doi: \begingroup \urlstyle{rm}\Url}\fi

\bibitem[Abelson and DiSessa(1986)]{abelson1986turtle}
H.~Abelson and A.~A. DiSessa.
\newblock \emph{Turtle geometry: The computer as a medium for exploring
  mathematics}.
\newblock MIT press, 1986.

\bibitem[Akg{\"u}n et~al.(2000)Akg{\"u}n, Erkut, and Batta]{akgun2000finding}
V.~Akg{\"u}n, E.~Erkut, and R.~Batta.
\newblock On finding dissimilar paths.
\newblock \emph{European Journal of Operational Research}, 121\penalty0
  (2):\penalty0 232--246, 2000.

\bibitem[Bachmann et~al.(2018)Bachmann, B{\"o}kler, Kopec, Popp, Schwarze, and
  Weichert]{bachmann2018multi}
D.~Bachmann, F.~B{\"o}kler, J.~Kopec, K.~Popp, B.~Schwarze, and F.~Weichert.
\newblock Multi-objective optimisation based planning of power-line grid
  expansions.
\newblock \emph{ISPRS International Journal of Geo-Information}, 7\penalty0
  (7):\penalty0 258, 2018.

\bibitem[Bellman(1958)]{bellman1958routing}
R.~Bellman.
\newblock On a routing problem.
\newblock \emph{Quarterly of applied mathematics}, 16\penalty0 (1):\penalty0
  87--90, 1958.

\bibitem[Bevanger et~al.(2010)Bevanger, Bartzke, Br{\o}seth,
  et~al.]{bevanger2011optimal}
K.~M. Bevanger, G.~Bartzke, H.~Br{\o}seth, et~al.
\newblock Optimal design and routing of power lines; ecological, technical and
  economic perspectives (optipol). progress report 2010.
\newblock \emph{NINA rapport}, 2010.

\bibitem[Bresenham(1965)]{bresenham1965algorithm}
J.~E. Bresenham.
\newblock Algorithm for computer control of a digital plotter.
\newblock \emph{IBM Systems journal}, 4\penalty0 (1):\penalty0 25--30, 1965.

\bibitem[Caldwell(1961)]{caldwell1961finding}
T.~Caldwell.
\newblock On finding minimum routes in a network with turn penalties.
\newblock \emph{Communications of the ACM}, 4\penalty0 (2):\penalty0 107--108,
  1961.

\bibitem[Chondrogiannis et~al.(2015)Chondrogiannis, Bouros, Gamper, and
  Leser]{chondrogiannis2015alternative}
T.~Chondrogiannis, P.~Bouros, J.~Gamper, and U.~Leser.
\newblock Alternative routing: k-shortest paths with limited overlap.
\newblock In \emph{Proceedings of the 23rd SIGSPATIAL International Conference
  on Advances in Geographic Information Systems}, pages 1--4, 2015.

\bibitem[Chondrogiannis et~al.(2018)Chondrogiannis, Bouros, Gamper, Leser, and
  Blumenthal]{chondrogiannis2018finding}
T.~Chondrogiannis, P.~Bouros, J.~Gamper, U.~Leser, and D.~B. Blumenthal.
\newblock Finding k-dissimilar paths with minimum collective length.
\newblock In \emph{Proceedings of the 26th ACM SIGSPATIAL International
  Conference on Advances in Geographic Information Systems}, pages 404--407,
  2018.

\bibitem[de~Lima et~al.(2016)de~Lima, Osis, de~Queiroz, and
  Santos]{de2016least}
R.~M. de~Lima, R.~Osis, A.~R. de~Queiroz, and A.~H.~M. Santos.
\newblock Least-cost path analysis and multi-criteria assessment for routing
  electricity transmission lines.
\newblock \emph{IET Generation, Transmission \& Distribution}, 10\penalty0
  (16):\penalty0 4222--4230, 2016.

\bibitem[Delling et~al.(2011)Delling, Goldberg, Pajor, and
  Werneck]{delling2011customizable}
D.~Delling, A.~V. Goldberg, T.~Pajor, and R.~F. Werneck.
\newblock Customizable route planning.
\newblock In \emph{International Symposium on Experimental Algorithms}, pages
  376--387. Springer, 2011.

\bibitem[Eppstein(1998)]{eppstein1998finding}
D.~Eppstein.
\newblock Finding the k shortest paths.
\newblock \emph{SIAM Journal on computing}, 28\penalty0 (2):\penalty0 652--673,
  1998.

\bibitem[Feix et~al.(2014)Feix, Obermann, Strecker, and
  K{\"o}nig]{feix2014netzentwicklungsplan}
O.~Feix, R.~Obermann, M.~Strecker, and R.~K{\"o}nig.
\newblock Netzentwicklungsplan strom 2014: Erster entwurf der
  {\"u}bertragungsnetzbetreiber.
\newblock
  \emph{\url{https://www.netzentwicklungsplan.de/sites/default/files/onep_2014_1_entwurf_teil1.pdf}},
  2014.

\bibitem[Ford~Jr(1956)]{ford1956network}
L.~R. Ford~Jr.
\newblock Network flow theory.
\newblock Technical report, Rand Corp Santa Monica Ca, 1956.

\bibitem[Grossardt et~al.(2001)Grossardt, Bailey, and
  Brumm]{grossardt2001analytic}
T.~Grossardt, K.~Bailey, and J.~Brumm.
\newblock Analytic minimum impedance surface: Geographic information
  system-based corridor planning methodology.
\newblock \emph{Transportation Research Record}, 1768\penalty0 (1):\penalty0
  224--232, 2001.

\bibitem[Hanssen et~al.(2018)Hanssen, May, Van~Dijk,
  et~al.]{hanssen2018spatial}
F.~Hanssen, R.~May, J.~Van~Dijk, et~al.
\newblock Spatial multi-criteria decision analysis (smcda) toolbox for
  consensus-based siting of powerlines and wind-power plants (consite).
\newblock \emph{NINA Rapport}, 2018.

\bibitem[Hu and Sotirov(2018)]{hu2018special}
H.~Hu and R.~Sotirov.
\newblock Special cases of the quadratic shortest path problem.
\newblock \emph{Journal of Combinatorial Optimization}, 35\penalty0
  (3):\penalty0 754--777, 2018.

\bibitem[Li et~al.(2013)Li, Yang, Sima, Li, and Yuan]{li2013optimization}
Y.~Li, Q.~Yang, W.~Sima, J.~Li, and T.~Yuan.
\newblock Optimization of transmission-line route based on lightning incidence
  reported by the lightning location system.
\newblock \emph{IEEE transactions on power delivery}, 28\penalty0 (3):\penalty0
  1460--1468, 2013.

\bibitem[Liu et~al.(2017)Liu, Jin, Yang, and Zhou]{liu2017finding}
H.~Liu, C.~Jin, B.~Yang, and A.~Zhou.
\newblock Finding top-k shortest paths with diversity.
\newblock \emph{IEEE Transactions on Knowledge and Data Engineering},
  30\penalty0 (3):\penalty0 488--502, 2017.

\bibitem[Monteiro et~al.(2005)Monteiro, Ram{\'\i}rez-Rosado, Miranda,
  Zorzano-Santamar{\'\i}a, Garc{\'\i}a-Garrido, and
  Fern{\'a}ndez-Jim{\'e}nez]{monteiro2005gis}
C.~Monteiro, I.~J. Ram{\'\i}rez-Rosado, V.~Miranda, P.~J.
  Zorzano-Santamar{\'\i}a, E.~Garc{\'\i}a-Garrido, and L.~A.
  Fern{\'a}ndez-Jim{\'e}nez.
\newblock Gis spatial analysis applied to electric line routing optimization.
\newblock \emph{IEEE transactions on Power Delivery}, 20\penalty0 (2):\penalty0
  934--942, 2005.

\bibitem[Piveteau et~al.(2017)Piveteau, Schito, Raubal, and
  Weibel]{piveteau2017novel}
N.~Piveteau, J.~Schito, M.~Raubal, and R.~Weibel.
\newblock A novel approach to the routing problem of overhead transmission
  lines.
\newblock Master's thesis, Geographisches Institut der Universit{\"a}t
  Z{\"u}rich, 2017.

\bibitem[Popp et~al.(1963)Popp, Dabekis, and Fullerton]{popp1963electronic}
R.~Popp, C.~Dabekis, and F.~Fullerton.
\newblock Electronic computer program permits optimized spotting of electric
  transmission towers.
\newblock \emph{IEEE Transactions on Power Apparatus and Systems}, 82\penalty0
  (66):\penalty0 360--365, 1963.

\bibitem[Ravi et~al.(1991)Ravi, Rosenkrantz, and Tayi]{ravi1991facility}
S.~Ravi, D.~J. Rosenkrantz, and G.~K. Tayi.
\newblock Facility dispersion problems: Heuristics and special cases.
\newblock In \emph{Workshop on Algorithms and Data Structures}, pages 355--366.
  Springer, 1991.

\bibitem[Rostami et~al.(2015)Rostami, Malucelli, Frey, and
  Buchheim]{rostami2015quadratic}
B.~Rostami, F.~Malucelli, D.~Frey, and C.~Buchheim.
\newblock On the quadratic shortest path problem.
\newblock In \emph{International Symposium on Experimental Algorithms}, pages
  379--390. Springer, 2015.

\bibitem[Rostami et~al.(2018)Rostami, Chassein, Hopf, Frey, Buchheim,
  Malucelli, and Goerigk]{rostami2018quadratic}
B.~Rostami, A.~Chassein, M.~Hopf, D.~Frey, C.~Buchheim, F.~Malucelli, and
  M.~Goerigk.
\newblock The quadratic shortest path problem: complexity, approximability, and
  solution methods.
\newblock \emph{European Journal of Operational Research}, 268\penalty0
  (2):\penalty0 473--485, 2018.

\bibitem[Santos et~al.(2019)Santos, de~Lima, Pereira,
  et~al.]{santos2019optimizing}
A.~H.~M. Santos, R.~M. de~Lima, C.~R.~S. Pereira, et~al.
\newblock Optimizing routing and tower spotting of electricity transmission
  lines: An integration of geographical data and engineering aspects into
  decision-making.
\newblock \emph{Electric Power Systems Research}, 176:\penalty0 105953, 2019.

\bibitem[Schito et~al.(2021)Schito, Moncecchi, and
  Raubal]{schito2021determining}
J.~Schito, D.~Moncecchi, and M.~Raubal.
\newblock Determining transmission line path alternatives using a
  valley-finding algorithm.
\newblock \emph{Computers, Environment and Urban Systems}, 86:\penalty0 101571,
  2021.

\bibitem[Tian et~al.(2015)Tian, Yang, Kong, Yin, He, and Yau]{tian2015two}
K.~Tian, X.~Yang, Q.~Kong, C.~Yin, R.~L. He, and S.~S.-T. Yau.
\newblock Two dimensional yau-hausdorff distance with applications on
  comparison of dna and protein sequences.
\newblock \emph{PloS one}, 10\penalty0 (9), 2015.

\bibitem[TSOs(2015)]{tsos2015netzentwicklungsplan}
G.~TSOs.
\newblock Netzentwicklungsplan strom 2025. zweiter entwurf der
  {\"u}bertragungsnetzbetreiber.
\newblock \emph{50Hertz Transm. GmbH (50 Hertz); Amprion GmbH (Amprion); TenneT
  TSO GmbH (TenneT); TransnetBW GmbH}, 2015.

\bibitem[Vega and Sarmiento(1996)]{vega1996image}
M.~Vega and H.~G. Sarmiento.
\newblock Image processing application maps optimal transmission routes.
\newblock \emph{IEEE computer applications in power}, 9\penalty0 (2):\penalty0
  47--51, 1996.

\bibitem[West et~al.(1997)West, Dwolatzky, and Meyer]{west1997terrain}
N.~West, B.~Dwolatzky, and A.~Meyer.
\newblock Terrain based routing of distribution cables.
\newblock \emph{IEEE computer Applications in power}, 10\penalty0 (1):\penalty0
  42--46, 1997.

\bibitem[Winter(2002)]{winter2002modeling}
S.~Winter.
\newblock Modeling costs of turns in route planning.
\newblock \emph{GeoInformatica}, 6\penalty0 (4):\penalty0 345--361, 2002.

\end{thebibliography}
\vspace{42em}
\pagebreak
\section*{\huge Appendices}
\begin{appendices}
\section{Accelerating minimal-angle Bellman-Ford for step-functions as angle cost functions}\label{sec:discrete}

In the proposed minimal-angle Bellman-Ford algorithm (\autoref{alg:alg1}), the edge distances $D[e]$ were updated by taking the minimum edge- plus angle-cost over the incoming edges. 
At a vertex with $k$ incoming edges and $l$ outgoing edges, the algorithm requires $k\cdot l$ operations to update the distances at all its outgoing edges, since the minimum is taken for each outgoing edge. Note that these operations appear at each vertex regardless of the type of graph (DAG, directed or undirected). Here, we propose a procedure that decreases the number of update operations significantly if the angle cost function is a step function:
\begin{theorem}\label{discrete}
If $c_a: [0,180]\longrightarrow S$ is a step function with $|S|$ possible angle costs ($|S|<\delta_v^-, \delta_v^+$), the shortest walk can be computed in time $\mathcal{O}\big(p \cdot \sum_v (|S|\cdot \delta_v^- + \delta_v^+) \cdot \log (|S| \delta_v^- \delta_v^+)\big)$.
\end{theorem}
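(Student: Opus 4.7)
My plan is to beat the naive per-vertex cost of $\delta_v^- \cdot \delta_v^+$ by exploiting two facts simultaneously: only $|S|$ distinct angle penalties can ever occur, and for a fixed outgoing direction the set of incoming directions yielding a given penalty is the union of at most two arcs in angular space. For every vertex $v$ I would pre-sort the incoming edges $f$ by their direction $\phi_f$ (the argument of $\vec{a}_{u,v}$) and the outgoing edges by $\phi_e$; this costs $\mathcal{O}((\delta_v^- + \delta_v^+)\log(\delta_v^- + \delta_v^+))$. Writing the step function with breakpoints $0 = \theta_0 < \theta_1 < \dots < \theta_{|S|} = 180$ and values $s_1,\dots,s_{|S|}$, the condition $\alpha(\sphericalangle(f,e)) = s_i$ becomes, for fixed $\phi_e$, the statement that $\phi_f$ lies in one of two arcs of width $\theta_i - \theta_{i-1}$ placed symmetrically around $\phi_e$.

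Next, for each level $i \in \{1,\dots,|S|\}$ I would perform a single rotational sweep of $\phi_e$, maintaining in a balanced BST (keyed by $D[f]$) the incoming edges currently inside the two level-$i$ arcs. Each incoming edge enters and leaves the level-$i$ window $\mathcal{O}(1)$ times, so the sweep produces, in $\mathcal{O}(\delta_v^- \log \delta_v^-)$ time, a piecewise constant function $g_i(\phi) := s_i + \min\{D[f] : f \text{ is in the level-$i$ window at direction } \phi\}$ with at most $\mathcal{O}(\delta_v^-)$ pieces. Across all $|S|$ levels the cost is $\mathcal{O}(|S|\delta_v^-\log\delta_v^-)$ and the total number of pieces across the $g_i$ is $\mathcal{O}(|S|\delta_v^-)$.

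Then I would compute the lower envelope $g^\star(\phi) = \min_i g_i(\phi)$ by a standard sweep over the $\mathcal{O}(|S|\delta_v^-)$ breakpoints, keeping the currently active value of each $g_i$ in a balanced BST ordered by height; this runs in $\mathcal{O}(|S|\delta_v^-\log(|S|\delta_v^-))$ and yields $g^\star$ as a sorted list of pieces. Finally, for each outgoing edge $e$ I would locate $\phi_e$ among the breakpoints of $g^\star$ in $\mathcal{O}(\log(|S|\delta_v^-))$ and set $D[e] \leftarrow c(e) + g^\star(\phi_e)$, together with the predecessor attaining this minimum. Summing the work at every vertex gives one Bellman-Ford iteration in $\mathcal{O}\bigl(\sum_v (|S|\delta_v^- + \delta_v^+)\log(|S|\delta_v^- \delta_v^+)\bigr)$; multiplying by $p$ iterations and invoking Theorem~\ref{theorem:bf} for correctness yields the claim.

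The main obstacle I expect is the angular bookkeeping of the sweep: carefully handling wrap-around at $2\pi$ so that each of the two arcs is tracked consistently, and verifying that each incoming edge genuinely contributes only $\mathcal{O}(1)$ enter/leave events per level, so that the per-level sweep cost is truly $\mathcal{O}(\delta_v^- \log \delta_v^-)$. Propagating predecessor information through the envelope step, so that argmins and not only min values are preserved, is a secondary but easily handled nuisance. The lower envelope itself is a routine computation on piecewise constant functions once each $g_i$ is stored as a sorted list of (breakpoint, value) pairs.
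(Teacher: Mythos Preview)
Your argument is correct and achieves the stated bound, but it proceeds by a genuinely different route than the paper. The paper does \emph{not} decompose by level; instead it enumerates all $k|S|$ candidate values $D[e_i^-]+\theta_s$, sorts them globally by cost, and then processes them in increasing order: for each pair $(i,s)$ it extracts from an AVL tree of outgoing angles those $\beta_j$ lying in the two arcs $[\alpha_i+\gamma_{s-1},\alpha_i+\gamma_s]$ and $[\alpha_i-\gamma_s,\alpha_i-\gamma_{s-1}]$, assigns $e_i^-$ as their predecessor, and deletes them. Since each outgoing edge is deleted exactly once, the total extraction/deletion work amortises to $\mathcal{O}(l\log l)$, and together with the $k|S|$ range queries this gives the bound directly. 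Your approach instead builds, for every level $i$, the piecewise-constant function $g_i(\phi)$ via a rotational sweep with a $D$-keyed BST, takes the lower envelope $g^\star=\min_i g_i$, and then binary-searches each outgoing direction in $g^\star$. The paper's greedy is arguably cleaner (one global sort, one amortised deletion argument) and avoids constructing any intermediate functions; your envelope construction is more work to set up but has the pleasant feature that $g^\star$ answers the update for \emph{any} outgoing direction, not just the $\delta_v^+$ given ones, and the per-level sweep structure makes the $|S|\cdot\delta_v^-$ term visually obvious. Your concern about angular wrap-around and the $\mathcal{O}(1)$ enter/leave events per edge per level is warranted but routine: for level $i$ the window is the union of two arcs of fixed width rotating with $\phi_e$, so each fixed $\phi_f$ is covered by each arc during exactly one interval of $\phi_e$ per revolution.
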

As proof, we will explain an algorithm and show its correctness and runtime. To simplify notation, we will from now on only consider one vertex with $k$ incoming edges and $l$ outgoing edges. We show that in the case of a step-function $\mathcal{O}(|S| \cdot k + l \cdot \log (|S|kl))$ operations suffice to update one vertex (previously $k\cdot l$), which leads to the improved runtime of minimal-angle Bellman-Ford as stated in Theorem~\ref{discrete}.

We denote the incoming edge vectors by the set $E^- = \{e_1^-, \dots, e_k^-\}$ and the outgoing edge vectors by $E^+ = \{e_1^+, \dots, e_l^+\}$. Note that if the graph is undirected, it is simply transformed into a directed graph, such that each original edge appears once in $E^-$ and once in $E^+$, and $k=l$. 
Thus, from now on a directed graph is assumed. Furthermore, each incoming/outgoing edge reaches/leaves the vertex $v$ in a certain angle. 
This angle is computed for all edges with respect to an arbitrary direction $\vec{w}$, yielding values $\alpha_i = \sphericalangle(\vec{w}, e^-_i)$ for $e^-_i\in E^-$ and $\beta_j = \sphericalangle(\vec{w}, e^+_j)$ for $e^+_j\in E^+$. 
Crucially, we define $\sphericalangle(x)$ as the angle in clockwise direction from 0 to 360 degrees, instead of a direction-insensitive angle from 0 to 180 degrees. Only with angle values from 0 to 360 it holds that $\alpha_i = \beta_j$ if and only if the vectors $e^-_i$ and $e^+_j$ form a straight line ($e^+_j$ leaves $v$ in the same direction as $e^-_i$ reaches it).

We further define the angle cost function $c_a$ in the following manner: 
\begin{gather*}
c_a(x) = 
    \begin{dcases}
        \theta_1 & 0 \leq x < \gamma_1\\
        \theta_2 & \gamma_1 \leq x < \gamma_2 \\
        \theta_3 & \gamma_2 \leq x < \gamma_3\\
        \cdots \\
        \theta_{|S|} & \gamma_{|S|-1} \leq x \leq 180 \\
    \end{dcases}
\end{gather*}
In other words, between an angle of $\gamma_{s-1}$ and $\gamma_{s}$, the angle cost is $\theta_s$. Thus, $\theta_1, \dots, \theta_{|S|}$ is the co-domain of the angle cost function $c_a$.

In the following algorithm we exploit that if there are only $|S|$ different angle costs, then only $k \cdot |S|$ different values are possible for the distance of an arbitrary outgoing edge from the source. The set of possible path distances is finite and can be enumerated as $L = \{ D[e^-_1] + \theta_1, D[e^-_1] + \theta_2, \dots, D[e^-_1] + \theta_{|S|}, \dots, \dots, D[e^-_k] + \theta_1, D[e^-_k] + \theta_2, \dots, D[e^-_k] + \theta_{|S|}\}$. The goal is to determine the optimal predecessor for each outgoing edge in $E^+$, by selecting the lowest of these values which is feasible in the respective angle.

Our algorithm proceeds in the following manner: First, the angles of the outgoing edges $\beta_1, \dots, \beta_l$ are inserted in a balanced and sorted AVL tree ($\mathcal{O}(l \log l)$). Next, the possible path distance values in $L$ are sorted. Sorting $|L| = k|S|$ values requires $\mathcal{O}(k |S| \cdot \log (k\cdot |S|))$. More specifically, not the distances themselves are sorted, but the corresponding tuples $(i,s)$ of the incoming edge index together with the index of the angle cost. 
The algorithm then iterates over the sorted tuples, such that $(i^*, s^*) = \argmin_{i,s} D[e^-_i] + \theta_s$ is retrieved first. 
In each iteration, the feasible range of angles given by the bounds $\alpha_i + \gamma_{s-1}$ to $\alpha_i + \gamma_{s}$ and $\alpha_i - \gamma_{s}$ to $\alpha_i - \gamma_{s-1}$ is considered. All outgoing edges whose angle value $\beta$ lies in this range will have the incoming edge $i$ as its predecessor, if they have not been assigned a predecessor previously.

In order to retrieve the relevant outgoing angles in this range, we first define an auxiliary function \texttt{get-inbetween} (\autoref{alg:getinbetween}) that retrieves all values from a tree which lie between a lower and upper bound. Note that this can be efficiently implemented with a recursive function.

\begin{algorithm}[H]
\SetAlgoLined
\Indp{Input: AVL tree $T$, current list of values $V$, lower bound $x_l$, upper bound $x_u$} \\
\uIf{T.isEmpty()}{
    return \{\}
}
$n = T.root()$ \\
\tcp{If value was inbetween or too large, go left}
\uIf{$x_l \leq n.key()$}{
    $T_l \leftarrow$ sub-tree rooted in left child of $n$\\
    V = $V \cup $ get-inbetween($T_l, V, x_l, x_r$)
}
\tcp{If value was inbetween or too low, go right}
\uIf{$n.key() \leq x_r$}{
    $T_r \leftarrow$  sub-tree rooted in right child of $n$\\
    V = $V \cup $ get-inbetween($T_r, V, x_l, x_r$)
}
\tcp{Add tree node if the value is inbetween $x_l$ and $x_r$}
\uIf{$x_l \leq n.key() \leq x_r$}{
    $V = V \cup n$ \\
}
\Return $V$
\caption{\texttt{get-inbetween}}
\label{alg:getinbetween}
\end{algorithm}

The main algorithm that was described above is given in \autoref{alg:discrete}.

\begin{algorithm}[H]
\SetAlgoLined
\Indp{Input: $E^+$, $E^-$, $\alpha_i\ \forall i, \beta_j \forall j$} \\
\tcp{Insert outgoing angle values in AVL tree:}
$T \leftarrow$ empty AVL tree\;
\For{$j=1..l$}{
    T.insert($\beta_j$)
}
\tcp{Sort incoming edge distances plus angle costs}
$L = [(1,1), \dots, (1,|S|), \dots, \dots, (k,1), \dots, (k,|S|)]$\;
$L^* = $ sort tuples $(i,s)$ in $L$ by their distance value $D[e^-_i] + \theta_s$ \;
\tcp{Iterate over list starting with the minimum}
\For{$(i,s)\in L^*$}{
    $A_1 \leftarrow $ \texttt{get-inbetween}($T, \{\}, \alpha_i + \gamma_{s-1}, \alpha_i + \gamma_{s}$)\;
    \tcp{Add nodes in the other direction}
    $A \leftarrow $ \texttt{get-inbetween}($T, A_1, \alpha_i - \gamma_{s}, \alpha_i - \gamma_{s-1}$)\;
    \tcp{For all retrieved nodes}
    \For{$\beta_j\in A$}{
        T.delete($\beta_j$)\;
        \tcp{update optimal predecessor}
        $P[e^+_j] = e^-_i$\;
    }
}
\caption{\texttt{Step-function update algorithm}}
\label{alg:discrete}
\end{algorithm}

Note that special care has to be taken with respect to the circularity of the $x$-axis here: As the $x$-values are angles, the highest value is actually close to the lowest value in the tree. Thus, if $a_i+\gamma_s > 360$, then \texttt{get-inbetween} must be called two times, once with the range of $a_i+\gamma_{s-1}$ to 360, and once with 0 to $(a_i+\gamma_s) \text{ mod } 360$, where mod denotes the modulo operation. For the sake of simplicity, in \autoref{alg:discrete} such special cases are not covered, and will not be covered in the following proofs. Similarly, we do not regard the case of several outgoing edges in the same angle (and thus the same $\beta$ value). Extensions of the above algorithms that regard the border cases and non-unique values are straightforward to implement.

\begin{lemma}
Algorithm~\ref{alg:discrete} assigns the correct predecessor to each outgoing edge.
\end{lemma}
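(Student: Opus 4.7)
The plan is to verify that, for every outgoing edge $e^+_j$, the value written to $P[e^+_j]$ is the incoming edge $e^-_i$ minimising $D[e^-_i] + c_a(\sphericalangle(e^-_i, e^+_j))$. I would first introduce a book-keeping function: for each pair $(i,j)$, let $s(i,j)$ be the unique step index with $\sphericalangle(e^-_i, e^+_j) \in [\gamma_{s(i,j)-1}, \gamma_{s(i,j)})$, so that the candidate cost of assigning $i$ as predecessor of $j$ equals $D[e^-_i] + \theta_{s(i,j)}$. With this notation, the sorted list $L^*$ is exactly an enumeration of all $k|S|$ candidate values the update could ever realise, in non-decreasing order.

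The heart of the proof is a geometric claim linking the two \texttt{get-inbetween} queries to the step index. Because $\alpha_i, \beta_j \in [0,360)$ are absolute clockwise angles, a straight line corresponds to $\beta_j = \alpha_i$, and the undirected turn angle satisfies $\sphericalangle(e^-_i,e^+_j) = \min(|\beta_j-\alpha_i|,\, 360 - |\beta_j-\alpha_i|)$. A short case analysis on whether the turn is clockwise or counter-clockwise would show that $\sphericalangle(e^-_i,e^+_j) \in [\gamma_{s-1},\gamma_s)$ precisely when
\[
\beta_j \in [\alpha_i+\gamma_{s-1},\,\alpha_i+\gamma_s) \,\cup\, (\alpha_i-\gamma_s,\,\alpha_i-\gamma_{s-1}] \pmod{360}.
\]
Hence the two ranges queried by the algorithm collectively identify exactly the set $\{\beta_j : s(i,j)=s\}$, and the recursive descent in \texttt{get-inbetween} retrieves precisely these values from the AVL tree.

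Given this correspondence, correctness collapses into a monotonicity argument. Whenever the main loop processes $(i,s)$ and extracts some $\beta_j$, the geometric claim forces $s = s(i,j)$, so the assignment $P[e^+_j] \leftarrow e^-_i$ is consistent with an actual candidate cost of $D[e^-_i] + \theta_s$. Since $L^*$ is sorted and $\beta_j$ is deleted from the tree the instant it is assigned, the pair that wins $\beta_j$ is the first one in $L^*$ whose angular range contains $\beta_j$; by the sorted order this pair attains $\min_{i} D[e^-_i] + \theta_{s(i,j)}$, which is the correct optimal cost. Completeness is then immediate: for any $j$ and any $i$, the pair $(i, s(i,j))$ lies in $L^*$, so $\beta_j$ is certain to be captured during some iteration and therefore to receive a predecessor.

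The main obstacle is bookkeeping around the circular angle axis. Wrap-around at $0/360$ means either of the two intervals above may split into two contiguous pieces, so that each conceptual \texttt{get-inbetween} call may require two AVL queries; one must check that the geometric characterisation survives the modular reduction and that no angle is counted twice (which would still yield the correct predecessor but could affect the runtime accounting of Theorem~\ref{discrete}). Granted consistent tie-breaking for equal $\beta$ values and correct handling of these wrap-around intervals, the rest of the argument is a mechanical combination of the sorted-order invariant with the angle-range characterisation.
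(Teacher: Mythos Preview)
Your proposal is correct and follows essentially the same approach as the paper: both arguments hinge on (i) the geometric correspondence between the queried intervals $[\alpha_i\pm\gamma_{s-1},\alpha_i\pm\gamma_s]$ and the step index $s(i,j)$, and (ii) the fact that sorted processing of $L^*$ combined with deletion on first hit guarantees the first capture realises the minimum. Your version is somewhat more explicit about completeness and the circular wrap-around, but these are presentational rather than substantive differences.
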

\begin{proof}
Let $e^+_j$ denote any outgoing edge. If the corresponding angle value $\beta_j$ is retrieved from $T$, this means $\beta_j$ lies between two values $\alpha_i +\gamma_{s-1}$ and $\alpha_i +\gamma_{s}$ due to the correctness of \texttt{get-inbetween} (or between $\alpha_i - \gamma_{s}$ and $\alpha_i - \gamma_{s-1}$, but the proof is analogous). Due to the computation of $\alpha$ and $\beta$, the fact that $\alpha_i +\gamma_{s-1} \leq \beta_j \leq \alpha_i +\gamma_{s}$ implies that $\gamma_{s-1} \leq \sphericalangle(e^-_i, e^+_j) \leq \gamma_s$ and the distance thus amounts to $D[e_i^-] + c_a(\sphericalangle(e^-_i, e^+_j)) = D[e_i^-] + \theta_s$.\\
Furthermore, observe that each outgoing edge is retrieved from the tree only once, since its entry in $T$ is deleted afterwards. When $e^+_j$ is retrieved from $T$ in an iteration where the current tuple is ($i,s$), then $e^-_i$ is assigned as its predecessor and the predecessor is not changed afterwards. For the sake of contradiction, assume there exists a better predecessor for $e^+_j$, say $e^-_{\hat{i}}$. Then this predecessor would imply a lower distance, so $D[e^-_{\hat{i}}] + \theta_{\hat{s}} < D[e_i^-] + \theta_s$ with some other angle value $\hat{s}$. The tuple $(\hat{i}, \hat{s})$ must therefore appear earlier in $S^*$, because the tuples in $S^*$ are sorted by their corresponding cost. However, if such a better tuple existed, $\beta_j$ would be between the corresponding bounds given by $\gamma_{\hat{s}-1}$ and  $\gamma_{\hat{s}}$, and it would have been deleted from the tree earlier. The contradiction thus shows that the first tuple that retrieves $\beta_j$ from the tree indeed declares the optimal predecessor for $e^+_j$.
\end{proof}

\begin{lemma}
Algorithm~\ref{alg:discrete} requires  $\mathcal{O}\big((|S|k + l) \cdot \log (|S| k l)\big)$ operations to update the predecessors of all outgoing edges.
\end{lemma}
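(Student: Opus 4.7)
The plan is to itemize the cost of each stage of Algorithm~\ref{alg:discrete} and sum, paying careful attention to the total work done across all calls to \texttt{get-inbetween} rather than bounding each call in isolation. The stages are: (i) constructing the AVL tree $T$ from the $l$ outgoing angles; (ii) building the tuple list $L$ of size $|S|\cdot k$; (iii) sorting $L$ by $D[e^-_i]+\theta_s$; (iv) the main loop of $|S|\cdot k$ iterations, each performing (up to two) range queries followed by deletions.

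For stage (i), inserting $l$ elements into a balanced AVL tree costs $\mathcal{O}(l\log l)$. Stage (ii) is $\mathcal{O}(|S|k)$. Stage (iii) is a comparison-based sort on $|S|k$ items, which costs $\mathcal{O}(|S|k\log(|S|k))$. The key work is in stage (iv): here I would rely on the standard fact that a range query on a balanced BST of size at most $l$ returning $r$ elements runs in $\mathcal{O}(\log l + r)$ time, which I would justify by a short structural argument on \texttt{get-inbetween} (the recursion visits only $\mathcal{O}(\log l)$ ``boundary'' nodes plus one node per reported element). Combined with $\mathcal{O}(\log l)$ for each subsequent deletion, an iteration that reports $r$ elements costs $\mathcal{O}(\log l + r\log l)$.

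Summing over the $|S|k$ iterations, the $\log l$ query overheads contribute $\mathcal{O}(|S|k\log l)$ in total. For the reporting-plus-deletion term, the crucial observation (already used in the correctness proof) is that every outgoing edge is retrieved from $T$ at most once over the entire execution, since it is deleted immediately upon being reported; hence the total number of reported elements across all iterations is at most $l$, contributing $\mathcal{O}(l\log l)$. Adding the four stages gives
\begin{equation*}
\mathcal{O}\bigl(l\log l + |S|k + |S|k\log(|S|k) + |S|k\log l + l\log l\bigr) \;=\; \mathcal{O}\bigl((|S|k + l)\log(|S|kl)\bigr),
\end{equation*}
which matches the claimed bound.

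The main obstacle I expect is the clean justification of the $\mathcal{O}(\log l + r)$ bound for \texttt{get-inbetween}, in particular handling the circular wraparound (when $\alpha_i+\gamma_s > 360^\circ$ one splits the query into two interval queries, each still of the standard form) so that the amortization across iterations remains valid. Once that is in place, the amortization argument ``each outgoing edge is deleted at most once, so total reported elements $\le l$'' makes the global count straightforward, and the bound follows by simple arithmetic on the summed terms.
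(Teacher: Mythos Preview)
Your proposal is correct and follows essentially the same decomposition and amortization argument as the paper's own proof: both itemize the AVL build, the sort of the $|S|k$ tuples, and then bound the main loop by charging the per-iteration $\mathcal{O}(\log l)$ query overhead to the $|S|k$ iterations and the retrieval-plus-deletion work to the at most $l$ outgoing edges that can ever be reported. Your treatment is in fact slightly more explicit than the paper's in justifying the $\mathcal{O}(\log l + r)$ range-query cost and in noting the circular wraparound, but the route and the final arithmetic are the same.
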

\begin{proof}
The computation of the angle values $\alpha_1, \dots \alpha_k$ and $\beta_1, \dots, \beta_l$ is straightforward and requires only one pass over the incoming and outgoing edges respectively. Inserting all $\beta_j (j=1..l)$ in an AVL tree is feasible in $\mathcal{O}(l \log l)$. Then each incoming edge is paired with all $S$ possible angle costs and these tuples are sorted, which takes $\mathcal{O}(k|S| \log k|S|)$ steps. In the subsequent loop of $k|S|$ iterations, the nodes are retrieved from the tree, and importantly, once a node was retrieved it is always deleted. In a single iteration, there could be $\mathcal{O}(l \log l)$ operations, for retrieving and deleting all elements from $T$. However, the total number of steps, summed over all iterations, is upper bounded by $\mathcal{O}(l \log l)$ as well because every element can only be encountered once. Since \texttt{get-inbetween} is executed in each iterations (using at least $\mathcal{O}(\log l)$ steps to find the closest values to $x_l$ and $x_r$), there are $\mathcal{O}(k|S| \log l +  l \log l)$ steps in total in the main loop.\\
Together, $\mathcal{O}(l \log l) +\mathcal{O}(k|S| \log k|S|)  + \mathcal{O}(k|S| \log l +  l \log l)$ can be summarized as $\mathcal{O}((k|S| + l) \log |S|kl)$.
\end{proof}

This result concludes the proof of Theorem~\ref{discrete}, when replacing $k$ by $\delta^-(v)$, $l$ by $\delta^+(v)$. Specifically, we use \autoref{alg:alg1} to compute the shortest path, but employ \autoref{alg:discrete} introduced here as an improved update rule when the angle cost function is a step function. Note that the runtime is only improved if $|S|<<\delta^+(v)$ since otherwise $|S|\delta^-(v) \approx \delta^+(v)\delta^-(v)$ and thus the simple update rule with complexity $\delta^+(v)\delta^-(v)$ would suffice.

\section{An accelerated update algorithm for convex angle cost functions}\label{convex}

As in the discrete case above, we show that the runtime decreases 
for any \textbf{convex} increasing angle cost function. The algorithm will be presented in the following.

\begin{theorem}
Let $c_a: [0,180] \longrightarrow \mathbb{R}^+$ be a convex and monotonically increasing angle cost function. Then the runtime of \autoref{alg:alg1} reduces to $\mathcal{O}\Big(p\cdot\sum_v  \big(\delta^-(v)+\delta^+(v)\big) \log \big(\delta^+(v) \delta^-(v)\big) \Big)$, since at each vertex with $k$ incoming and $l$ outgoing edges only $\mathcal{O}\big((k+l) \log kl\big)$ operations are executed.\\
\end{theorem}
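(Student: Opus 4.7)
The plan is to reformulate each per-vertex update as a lower-envelope computation on the circle of outgoing directions. Fix a vertex $v$ with $k$ incoming edges of directions $\alpha_1,\dots,\alpha_k$ and current distances $D_i := D[e_i^-]$, and $l$ outgoing edges of directions $\beta_1,\dots,\beta_l$. For each incoming edge $i$, define
\[
g_i(\beta) \;:=\; D_i + c_a\!\big(h(\alpha_i, \beta)\big), \qquad \beta \in [0,360),
\]
where $h(\alpha,\beta) = \min(|\alpha-\beta|,\,360-|\alpha-\beta|) \in [0,180]$ is the bending angle at $v$. The update then amounts to evaluating the lower envelope $f(\beta) := \min_{i} g_i(\beta)$ at each outgoing direction $\beta_j$ and recording the minimizing $i$.

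First, I would establish that any two functions $g_i, g_{i'}$ cross $\mathcal{O}(1)$ times on the circle. The map $h(\alpha_i,\cdot)$ is piecewise linear with kinks only at $\alpha_i$ and $\alpha_i + 180 \pmod{360}$; on each of the two resulting arcs it is affine and non-negative, so $g_i$ is the composition of a convex monotone function with an affine one, hence convex. Two convex functions cross at most twice on any common interval, and since the pair $(g_i,g_{i'})$ has a bounded number of common maximal arcs of mutual convexity, they intersect a constant number of times. Consequently $f$ is a Davenport--Schinzel sequence of bounded order over $k$ symbols and has near-linear complexity $\widetilde{\mathcal{O}}(k)$.

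Next, I would build $f$ by the standard divide-and-conquer on the $g_i$: recursively compute envelopes of two halves of size $k/2$ and merge them in linear time by a sweep using the constant-crossing property, yielding an $\mathcal{O}(k\log k)$ construction. To answer the $l$ queries, I would sort $\beta_1,\dots,\beta_l$ in $\mathcal{O}(l\log l)$ time and sweep through $f$ in one pass, assigning each $\beta_j$ to the envelope segment containing it and thereby to the minimizing incoming edge; equivalently, each query can be resolved by binary search on the envelope breakpoints in $\mathcal{O}(\log k)$. Either way the per-vertex update costs $\mathcal{O}((k+l)\log(kl))$. Summing over all vertices and multiplying by the $p$ outer iterations of Algorithm~\ref{alg:alg1} gives the claimed
\[
\mathcal{O}\Big(p \cdot \sum_v \big(\delta^-(v)+\delta^+(v)\big)\log\big(\delta^-(v)\delta^+(v)\big)\Big).
\]

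The main obstacle is the circularity of the angle domain: each $g_i$ is not globally convex on the circle because of the kinks at $\alpha_i$ and $\alpha_i+180$. I would handle this by cutting the circle at an arbitrary fixed direction, unrolling to $[0,360)$, and treating each $g_i$ as a union of two convex pieces with endpoint identifications to close the circle. Once the constant-crossing bound per pair is verified carefully across the seam, the lower-envelope merge and the subsequent sweep become clean applications of standard machinery, and the runtime bound follows immediately.
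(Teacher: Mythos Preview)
Your lower-envelope formulation is a natural and genuinely different route from the paper. The paper does not use divide-and-conquer; it inserts the incoming edges \emph{incrementally in order of increasing} $D_i$, maintaining the envelope as a balanced tree of intersection points and a doubly-linked list of ``areas of responsibility''. Its key structural lemma is an increasing-returns property of convex $c_a$: if $D_p\le D_q$ and $f_q$ beats $f_p$ at some angle, then it keeps beating $f_p$ all the way to the antipode $\alpha_p+180$. This immediately yields that each $g_i$ contributes at most one arc to the envelope, so the incremental insertion is amortised $\mathcal{O}(\log k)$ per edge. Your divide-and-conquer avoids this lemma and instead relies on pairwise crossing bounds plus standard envelope machinery; that is a legitimate and arguably more modular argument.

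There are, however, two genuine gaps you must close. First, the sentence ``two convex functions cross at most twice on any common interval'' is false in general: take $f(x)=10x^2$ and $g(x)=10x^2+\sin x$, both convex, with $f-g=-\sin x$ having infinitely many zeros. The crossing bound you need is \emph{specific} to the structure here, namely that every $g_i$ is the \emph{same} convex function $c_a(|\,\cdot\,|_m)$ shifted by $(\alpha_i,D_i)$. The correct argument is that on each of the four arcs determined by $\alpha_i,\alpha_{i'},\alpha_i{+}180,\alpha_{i'}{+}180$ the difference $g_i-g_{i'}$ is monotone (check the signs of $c_a'$ using monotonicity of $c_a'$), so $g_i-g_{i'}$ has a single maximum and a single minimum on the circle and hence at most two zeros. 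This gives $\lambda_2(k)=O(k)$ envelope complexity, not merely ``near-linear''.

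Second, your merge step assumes you can compute the intersection of $g_i$ and $g_{i'}$ in $O(1)$ time, but $c_a$ is an arbitrary convex increasing function and the equation $c_a(|\alpha_i-x|_m)-c_a(|\alpha_{i'}-x|_m)=D_{i'}-D_i$ need not have a closed form. The paper confronts exactly this issue and resolves it by observing that only the $l$ query angles $\beta_j$ matter: it replaces exact intersections by the nearest $\beta$-value found via binary search in a pre-built balanced tree of the $\beta_j$, and then argues carefully that this discretisation does not change any predecessor assignment. Your proposal needs an analogous device; otherwise the $\mathcal{O}(k\log k)$ merge bound is unjustified. Once you fix the crossing argument and either adopt the $\beta$-grid discretisation or assume a constant-time intersection oracle, your approach goes through and matches the paper's bound.
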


We define $E^-$, $E^+$ and the corresponding in and outgoing angle values $\alpha_1, \dots, \alpha_k$ and $\beta_1, \dots, \beta_l$ as above in appendix~\ref{sec:discrete}.
With this notation, we can formulate the problem to determine the the optimal predecessor in terms of angle and distances for each outgoing edge as $$\forall j:\ \text{ Find } \argmin_i D[e_i^-] + c_a(|\alpha_i - \beta_j|_{m})\ ,\ \ |x|_{m} = \min \{|x|, 360-|x|\}\ .$$
The operation $|x|_{m}$ computes the smaller angle between both and thereby ensures that the input of $c_a$ lies within the function domain $[0,180]$. 
\begin{figure}[ht]
    \centering
    \includegraphics[width=\textwidth]{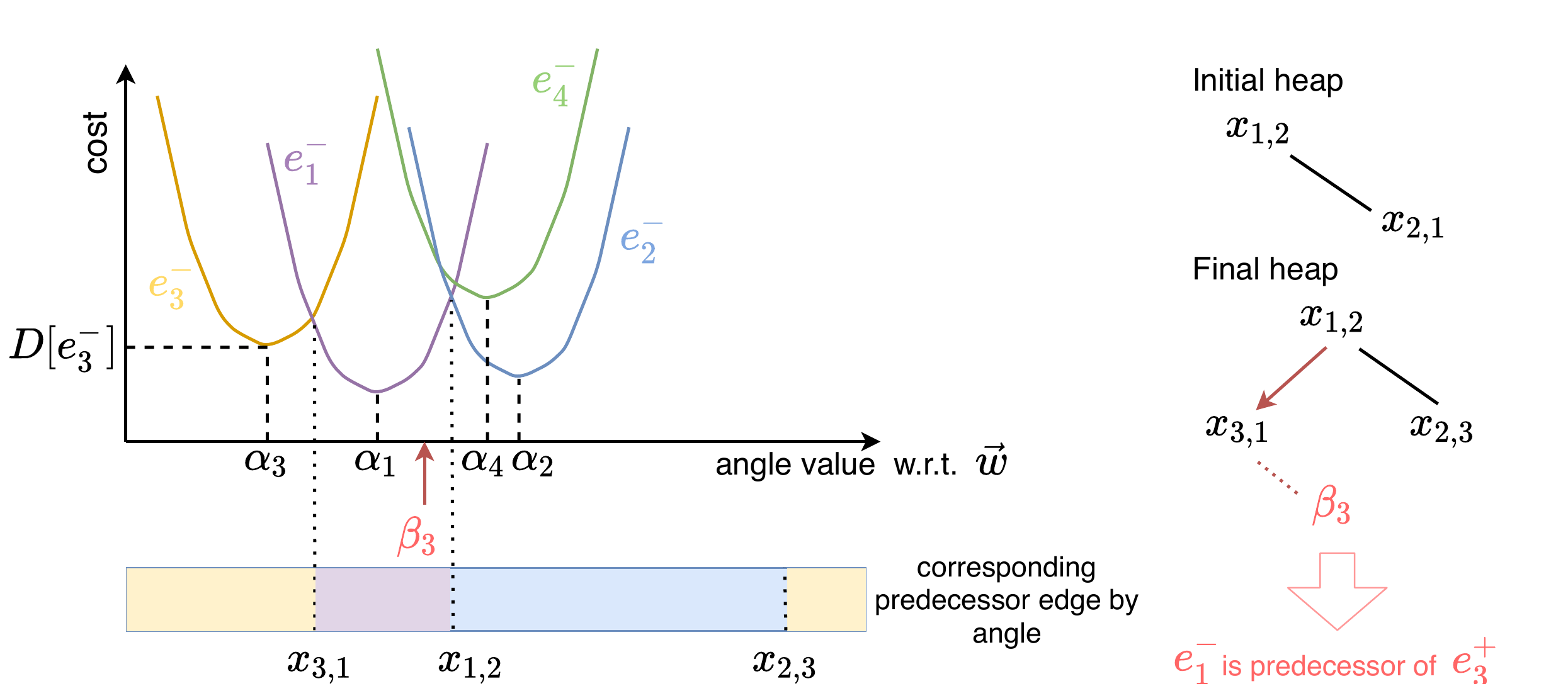}
    \caption{Angle update algorithm for convex, monotonically increasing angle cost functions. The intuition is that the intersections of neighboring functions define the borders of the ''area of responsibility''for each incoming edge. Intersections are iteratively added to a tree, starting with the incoming edge with lowest base cost $D$. The tree is sorted and balanced. After the tree is fully constructed, each outgoing edge $e^+_j$ is assigned its optimal predecessor by finding the closest value to $\beta_j$ in the tree.}
    \label{fig:convex}
\end{figure}

Now $D[e_i^-] + c_a(|\alpha_i - \beta_j|_m)$ can be seen as a function associated to the incoming edge $i$ and taking an angle $\beta_j$ as input. We denote these functions by $f_{i}$ such that $f_{i}(x) =  D[e_i^-] + c_{a}(|\alpha_i-x|_m)$. Observe that each function is a shifted version of $c_a(|x|_m)$, displaced by $\alpha_i$ in x- and $D[e_i^-]$ in y-direction. \autoref{fig:convex} shows an example, where each incoming edge induces a cost function for the angles of possible outgoing functions. Note that $f_{i}$ is symmetric around $\alpha_i$, and it has a global minimum at $\alpha_i$ because $c_a$ is monotonically increasing. Also, if the x-axis is rotated appropriately such that $|x|_m = |x|$ then $f_i$ is convex because it is a (shifted) composition of two convex functions, $c_a$ and $|x|$. The key observation used in the following algorithm is that - due to convexity - an incoming edge is the optimal predecessor for a \textit{closed} range of angles (without gaps). In other words, if two outgoing edges at angles $\beta_1, \beta_2$ have the same optimal predecessor, then another outgoing edge with angle $\beta_3$ that fulfills $\beta_1\leq \beta_3 \leq \beta_2$ must have the same predecessor. To determine the ''area of responsibility'', i.e. the range of angles in which an outgoing edge would get $e^-_i$ assigned as its predecessor (\autoref{fig:convex} bottom left), the intersections of the functions $f_{i}$ are computed. The idea is to construct a tree $T_x$ containing the intersection values (\autoref{fig:convex} right) that can be traversed in $\mathcal{O}(\log k)$ in the end for each outgoing edge to find the closest intersection and thereby its optimal predecessor edge.

First, the incoming edges are sorted by their distance from the source $D[e^-_i]$. For simplicity of notation, we from now on assume that they are already sorted, such that $D[e^-_1]\leq D[e^-_2]\leq \dots \leq D[e^-_k]$. Then, the incoming edges are successively processed and the intersection of their respective function $f_i$ with previously responsible functions in their range is computed and added to a balanced AVL tree $T_x$.
\autoref{alg:convex} describes the procedure in detail. The tree is built iteratively by insertion and replacement of intersection points. We define an operation \texttt{compute-intersection} that outputs the intersection of two functions. Given $f_{p}, f_{q}$, \texttt{compute-intersection}($f_{p}, f_{q}$) yields a $x$ value at which the functions intersect. 
Crucially, the angle values are circular, i.e. an angle of 0 degrees corresponds to an angle of 360 degrees. Therefore, \texttt{compute-intersection}($f_{p}, f_{q}$) $\neq$\texttt{compute-intersection}($f_{q}, f_{p}$) because if the two functions intersect at any point $x$, then they intersect at least at two points, $x$ and $360-x$. Thus, for each intersection we store which function was lower in clockwise (positive x-) direction and which one in counter-clockwise (negative x-) direction. For example, a triple $(x,p, q)$ means that 
$e^-_{p}$ is the optimal predecessor for $\beta$-angles that are close to $x$ in counter-clockwise direction and $e^-_{q}$ is optimal for outgoing edges close to $x$ in clockwise direction. Due to the circularity it is for example possible that $\alpha_p = 300$, $\alpha_{q} = 10$ and $x=330$. It will be argued further below that \texttt{compute-intersection} can be approximated efficiently. 

Secondly, \autoref{alg:convex} utilizes an operation called \texttt{find-closest} that returns the two closest value in a balanced AVL-tree. The algorithm is outlined in detail in \autoref{alg:findclosest}. The balanced and sorted tree is traversed and the closest left and right values $\sigma_l$, $\sigma_r$ with respect to the input element are memorized and returned.\\
Last, in addition to the sorted tree of intersections $T_x$ we also maintain a double-linked list $L$ of neighboring intersections. If $(x, p, q)$ is the intersection triple of $f_p$ and $f_q$, then $(x,p,q)$.next is defined as the next intersection in clockwise direction which must therefore be of the form $(y, q,r)$ with some $r$. Similarly, $(x,p,q)$.previous is of the form $(x,o,p)$ and denotes the next intersection counter-clockwise.
%
In the main algorithm detailed in \autoref{alg:convex}, for each new incoming edge $e^-_i$ the closest angle values of incoming edges that were already processed are found with \texttt{find-closest}. To do this efficiently in logarithmic time, all $\alpha_i$ are iteratively inserted into a second tree $T_{\alpha}$ and \texttt{find-closest}($T_{\alpha}, \alpha_i$) is called.

\begin{algorithm}[H]
\SetAlgoLined
\Indp{Input: AVL tree $T$ element $x$} \\
\tcp{define $\sigma_l$ and $\sigma_r$ as the currently closest elements}
$\sigma_l, \sigma_r = \infty$ \\
$n = T.root()$ \\
\tcp{Traverse tree}
\While{not isLeaf$(n)$}{
    \uIf{$x > n.key()$}{
        $\sigma_l = n$ \\
        $n = \text{ getRightChild}(n)$
    }\uElse{
        $\sigma_r = n$ \\
        $n = \text{ getLeftChild}(n)$ \\
    }
}
\tcp{check border cases if one is still infinity}
\uIf{$\sigma_l\text{ is }\infty$}{
    \tcp{set left bound to the largest element in the tree}
    $\sigma_l = T[-1]$
}
\uIf{$\sigma_r\text{ is }\infty$}{
    \tcp{set right bound to the smallest element in the tree}
    $\sigma_r = T[0]$
}
\Return $\sigma_l, \sigma_r$
\caption{\texttt{find-closest}}
\label{alg:findclosest}
\end{algorithm}


In short, algorithm~\ref{alg:convex} implements the following steps:
At the $i$-th step, $f_i$ is retrieved, thus corresponding to the $i$-th lowest incoming cost $D[e^-_i]$. Then \texttt{find-closest} is called for $a_i$. Let the result, i.e. the closest angles to $\alpha_i$, be denoted by $\alpha_p$ and $\alpha_q$. If $f_i$ obtains a better cost at the intersections between $f_p$ and $f_q$, the triple is deleted from $T_x$ and the next triples in both directions are checked. This is repeated until encountering an intersection point where $f_i$ can not improve on the previous functions. Due to convexity, once $f_i$ is outperformed by another function it will not be lower again in this direction. \\
Once these functions that mark the end of the area of responsibility of edge $i$ have been determined, the intersections with them are computed and added to $T_x$. Also, the double-linked list is updated accordingly. 
\\After all incoming edges were considered and possibly added to $T_x$, the outgoing edges can be processed. For each $\beta_j$, the left and right closest intersection triples $(x_1, -, i)$ and $(x_2, i, -)$ are found with a call to \texttt{find-closest} and the corresponding incoming edge $e^-_i$ is assigned as the predecessor of $e^+_j$.

Consider the example in \autoref{fig:convex}. First of all, the first function must be found that intersects with $f_1$ (PART 1 in \autoref{alg:convex}). Clearly, $f_1(\alpha_1)\leq f_i(\alpha_i)\ \forall i$, so it must be checked whether $f_2$ is better at any other point, formally whether $\exists x: f_1(x)>f_2(x)$. Lemma~\ref{firstloop} will show that it is sufficient to check whether $f_2$ is lower at the opposite point of $\alpha_1$, namely $(\alpha_1 + 180) \text{ mod } 360$, where $f_1$ is maximal. Since $f_2$ satisfies this condition, the two intersection (clockwise and counter-clockwise from $\alpha_1$) between $f_1$ and $f_2$ are computed and 
added to the tree as shown in \autoref{fig:convex} (upper right).
$T_x$ is sorted and balanced. In the first iteration of PART 2 in \autoref{alg:convex}, $\alpha_3$ is considered and its neighboring $\alpha_i$ are determined with \texttt{find-closest}. In the example, $\alpha_3$ is between $\alpha_2$ counter-clockwise and $\alpha_1$ clockwise. 
We therefore check whether $f_3$ improves the value at intersection $x_{2,1}$. We indeed find that $f_3(x_{2,1}) < f_1(x_{2,1})$, so there is clearly a range of angles where $e^-_3$ is the optimal predecessor. The triple $(x_{2,1}, 2, 1)$ is thus deleted from $T_x$. To determine the range where $f_3$ is optimal, the \autoref{alg:convex} further proceeds by checking the next intersections in clockwise and counter-clockwise directions. In this case, the next intersection in both cases is $x_{1,2}$. However, $f_3(x_{1,2}) > f_1(x_{1,2})$, so the algorithm can stop here. After these checks, in \autoref{alg:convex} now $p^*=2$ for our example, and $q^*=1$. We thus compute the new intersections $x_{2,3}$ and $x_{3,1}$ and add the triples $(x_{2,3}, 2, 3)$ and $(x_{3,1}, 3, 1)$ to $T_x$. 
On the other hand, in the last iteration $f_4$ (whose angle $\alpha_4$ lies between $\alpha_1$ and $\alpha_2$) does not outperform the value at the intersection between $f_1$ and $f_2$  
and consequently $\alpha_4$ is not an optimal predecessor for any outgoing angle. After all incoming edges were considered and $T_x$ is final, the $\beta$-values of the outgoing edges are processed. In \autoref{fig:convex} for example $\beta_3$ is between the intersection triples $(x_{3,1}, 3, 1)$ and $(x_{1,2}, 1,2)$ and thus $e^-_1$ is the optimal predecessor for $e^+_3$.

\footnotesize{
\begin{algorithm}[H]
\SetAlgoLined
\tcp{PART 1: Find first intersecting functions}
 $i_{start}=2$\\
 $\hat{\alpha_1} \longleftarrow (\alpha_1 + 180) \text{ mod } 360$ 
 \tcp{opposite point to $\alpha_1$} 
 \While{$i_{start}\leq k$ and $f_{i_{start}}(\hat{\alpha_1}) > f_{1}(\hat{\alpha_1})$}{
    $i_{start} \longleftarrow i_{start} + 1$ \\
 }
 \uIf{$i_{start}>k$}{
    \tcp{Break: no function better than $f_1$ at any point}
    $T_x$.insert($\alpha_1, 1,1$) \\
    \Return $T_x$ 
 }
 $x_1$ = \texttt{compute-intersection}($f_1, f_{i_{start}}$) \\
 $x_2$ = \texttt{compute-intersection}($f_{i_{start}}, f_1$) \\
 $(x_1, 1,i_{start})$.next = $(x_1, 1,i_{start})$.previous = $(x_2, i_{start},1)$ \\
 $(x_2, i_{start}, 1)$.next =$(x_2, i_{start}, 1)$.previous = $(x_1, 1, i_{start})$ \\
 $T_x$.insert($(x_1, 1,i_{start})$, $(x_2, i_{start},1)$)\\
 $T_{\alpha}$.insert$(\alpha_1, \alpha_{i_{start}})$ \tcp{initialize $T_{\alpha}$}
\tcp{PART 2: Process further incoming edges}
 \For{$i=i_{start}..k$}{
    \tcp{Get closest $\alpha$ values}
     $\alpha_p, \alpha_q \longleftarrow$ \texttt{find-closest}($T_{\alpha}, \alpha_i$) \\
     $x_{p,q}\longleftarrow$ \texttt{compute-intersection}($f_p, f_q$) \\
    \tcp{Does $f_i$ outperform the previous functions at one or more intersections?}
    \uIf{$f_i(x_{p,q})<f_p(x_{p,q})$ or $f_i(x_{p,q})<f_q(x_{p,q})$}{
        $T_{\alpha}$.insert$(\alpha_i)$\\
        $T_x$.delete($(x_{p,q}, p,q)$) \\
        \tcp{Check counter-clockwise intersection}
        $(x_1, o,p)\longleftarrow (x_{p,q}, p,q)$.previous \\
        \While{$f_i(x_1) < f_p(x_1) $}{
            \tcp{delete and get next counter-clockwise}
            $T_x$.delete($(x_1, o,p)$)\\
            $T_{\alpha}$.delete($\alpha_p$) \\
            $(x_1, o,p) \longleftarrow (x_1, o,p).$previous\\
        }
        $p^{*} \longleftarrow p$ \\
        \tcp{Check clockwise intersection}
        $(x_2, q,r)\longleftarrow (x_{p,q}, p,q)$.next \\
        \While{$f_i(x_2)<f_q(x_2)$}{
            \tcp{delete and get next clockwise}
            $T_x$.delete($(x_2, q,r)$)\\
            $T_{\alpha}$.delete($\alpha_q$) \\
            $(x_2, q,r) \longleftarrow (x_2, q,r)$.next\\
        }
        $q^{*} \longleftarrow q$ \\
    }
    
    $x_1^{*} \longleftarrow $ \texttt{compute-intersection}($f_{p^*}, f_i$) \\
    $x_2^* \longleftarrow $ \texttt{compute-intersection}($f_i, f_{q^{*}}$) \\
    $T_x$.insert($(x_1^{*},p^{*},i)$,$(x_2^*, i,q^{*})$) \\
    $(x_1, o,p^{*})$.next = $(x_2^{*}, i,q^{*})$.previous = $(x_1^{*},p^{*}, i)$\\
    $(x_1^{*},p^{*},i)$.next = $(x_2,q^{*}, r)$.previous = $(x_2^{*}, i,q^{*})$\\
}
\tcp{PART 3: process outgoing edges}
 \For{$j=1..l$}{
     $(x_1, p,q),(x_2,q,r) \longleftarrow$ \texttt{find-closest}($T_x, \beta_j$)\\
    \tcp{update predecessor}
    $P[e^+_j] = e_q^-$ \\
 }
 \caption{Efficient angle cost update algorithm for convex angle cost functions}
 \label{alg:convex}
\end{algorithm}

}
\normalsize

In the following, we will describe several properties of the proposed method that will be necessary to proof the correctness and runtime efficiency of \autoref{alg:convex}. 

\begin{lemma}\label{firstloop}
Let $f_p$ and $f_q$ be functions for the incoming edges $e^-_p$ and $e^-_q$ as defined above, and let $D[e^-_p]\leq D[e^-_q]$ and $\alpha_p <180$ and $\alpha_p<\alpha_q<\alpha_p+180$, such that $f_q$ is equal to $f_p$ shifted in positive $x$-direction and in positive $y$-direction. Let $\hat{\alpha_p} =  \alpha_p+180$ be the point on the opposite side of $\alpha_p$ on the cycle.\\
If $\exists x:\ f_q(x) < f_p(x)$ then $\forall x^{'}\in [x, \hat{\alpha_p}]:\ \ f_q(x^{'}) < f_p(x^{'})$
\end{lemma}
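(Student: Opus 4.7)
The plan is to reduce the lemma to a monotonicity statement: the gap function $g(x) := f_p(x) - f_q(x)$ is non-decreasing on $[\alpha_p, \hat{\alpha_p}]$. Once this is established, the conclusion is immediate, because $g(x) > 0$ together with monotonicity forces $g(x') \geq g(x) > 0$ for every $x' \in [x, \hat{\alpha_p}]$, i.e. $f_q(x') < f_p(x')$.

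Without loss of generality I would rotate coordinates so that $\alpha_p = 0$, making $\hat{\alpha_p} = 180$ and placing $\alpha_q \in (0, 180)$. On the interval $[0, 180]$, the modular absolute value simplifies: $|x - \alpha_p|_m = x$ throughout, so $f_p(x) = D[e_p^-] + c_a(x)$. For $f_q$, I would split at $\alpha_q$ and treat the two subintervals separately.

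On $[0, \alpha_q]$ one has $f_q(x) = D[e_q^-] + c_a(\alpha_q - x)$. Then
\[ g(x) = (D[e_p^-] - D[e_q^-]) + c_a(x) - c_a(\alpha_q - x), \]
and monotonicity of $c_a$ gives that the first $c_a$-term is non-decreasing in $x$ while the second is non-increasing, so $g$ is non-decreasing on this subinterval. On $[\alpha_q, 180]$ one has $f_q(x) = D[e_q^-] + c_a(x - \alpha_q)$, hence
\[ g(x) = (D[e_p^-] - D[e_q^-]) + \bigl(c_a(x) - c_a(x - \alpha_q)\bigr). \]
Here the decisive step is the \emph{increasing-differences property} of convex functions: for any convex $c_a$ and any fixed gap $h > 0$, the map $y \mapsto c_a(y+h) - c_a(y)$ is non-decreasing in $y$. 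Applying this with $y = x - \alpha_q$ and $h = \alpha_q$ shows $g$ is non-decreasing on $[\alpha_q, 180]$ as well. A quick check that both piecewise expressions agree at $x = \alpha_q$ (both give $(D[e_p^-] - D[e_q^-]) + c_a(\alpha_q) - c_a(0)$) ensures $g$ is continuous, and hence non-decreasing on all of $[0, 180]$.

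The main obstacle is really a bookkeeping one: the circularity of the angle domain and the $|\cdot|_m$ operator force a case split, and one has to be careful that the hypothesis $\alpha_p < \alpha_q < \alpha_p + 180$ is exactly what makes the modular distance reduce to the ordinary distance on the whole range $[\alpha_p, \hat{\alpha_p}]$. Once that is set up cleanly, the convex-differences inequality does all the work; no derivatives are needed, so the argument goes through even if $c_a$ is only assumed convex (not differentiable).
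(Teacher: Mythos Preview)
Your argument is correct. Both your proof and the paper's hinge on the same convexity fact---the increasing-differences property $c_a(y+h)-c_a(y)$ non-decreasing in $y$---but you package it differently. The paper states this as a standalone ``increasing returns'' claim and then runs a single chain of inequalities from $f_q(x')$ to $f_p(x')$, invoking the triangle inequality for $|\cdot|_m$ along the way. You instead introduce the gap function $g=f_p-f_q$, split the interval at $\alpha_q$, and show monotonicity piecewise: on $[\alpha_p,\alpha_q]$ only monotonicity of $c_a$ is needed, while on $[\alpha_q,\hat{\alpha_p}]$ the convex-differences inequality does the work. Your route is slightly more transparent (one sees immediately \emph{why} the gap cannot close again), avoids the triangle-inequality step, and isolates exactly where convexity rather than mere monotonicity is required. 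The paper's route is more compact but less illuminating. Both silently assume the witness $x$ lies in $[\alpha_p,\hat{\alpha_p}]$; this is harmless since the lemma's conclusion is vacuous otherwise and the intended application only evaluates at $\hat{\alpha_p}$.
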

\begin{proof}
We will use a property of convex monotonically increasing functions that can be seen as an increasing returns property: 
\begin{claim}
For a convex function $g$ and $a>b$, $a,b,c\geq 0$ it holds that 
\begin{align}\label{claimd}
    g(a+c)-g(a) \geq g(b+c)-g(b)
\end{align}
\end{claim}
\begin{proof}
The intuition is that if $g$ is differentiable its gradients are increasing and thus the difference in value becomes larger. To prove the claim, we use another well-known property of convex functions, namely if $f$ is convex and $f(0)\leq 0$ then 
\begin{align}\label{eq:1de}
    \forall\ d,c\geq 0:\ \ f(c)+f(d)\leq f(c+d)
\end{align}
We define $f(x) = g(x+b) - g(b)$. Since $f$ is only a shifted version of $g$, $f$ is convex as well and also note that $f(0) = g(b)-g(b) = 0$. We further set $d = a-b > 0$ and plug in $d$ in property~\ref{eq:1de} to yield 
\begin{gather*}
    f(d) + f(c) \leq f(d+c) \\
    \iff f(a-b) + f(c) \leq f(a-b+c) \\
    \iff g(a-b+b) - g(b) + g(c+b) - g(b) \leq g(a-b+c+b) -g(b) \\
    \iff g(a) - g(b) + g(b+c) \leq g(a+c)
\end{gather*}
which is equivalent to the claim.
\end{proof}

To continue the proof of lemma \ref{firstloop} we consider the point $x$ where $f_q(x) < f_p(x)$. Observe that due to $D[e^-_p]\leq D[e^-_q]$ and with $c_a$ monotonically increasing,
\begin{gather*}
    f_q(x) < f_p(x) \\
    \implies  D[e^-_q] + c_a(|x-\alpha_q|_{m}) < D[e^-_p] + c_a(|\alpha_p-x|_{m}) \\
    \implies |x-\alpha_q|_{m} < |\alpha_p-x|_{m}
\end{gather*}
Let $ x^{'}$ be any other point between $x$ and the opposite point of $\alpha_p$ as defined above ($x^{'}\in [x, \hat{\alpha_p}]$). Since $c_a$ is monotonically increasing, $f_p$ is maximal for $\hat{\alpha_p}$. We use the claim show that $f_q(x^{'})<f_p(x^{'})$ by setting $a = |x - \alpha_p|_{m}$ and $b = |x - \alpha_q|_{m}$ (leading to $a>b$), and further setting $c = |x - x^{'}|_{m}$:

\begin{gather*}
    f_q(x^{'}) \\
    = D[e^-_q] + c_a(|x^{'} - \alpha_q|_{m}) \\
    \underset{*}{\leq} D[e^-_q] + c_a(|x^{'} - x|_{m}+ |x - \alpha_q|_{m}) \\
    = D[e^-_q] + c_a(c+b) \\
    \underset{\ref{claimd}}{\leq} D[e^-_q] + c_a(c+a) - c_a(a) + c_a(b) \\
    = D[e^-_q] +  c_a\big(|x^{'} - x|_{m} + |x - \alpha_p|_{m}\big) - c_a\big(|x - \alpha_p|_{m}\big) + c_a\big(|x - \alpha_q|_{m}\big) \\
    \underset{**}{=} D[e^-_q] +  c_a\big(|x^{'} - \alpha_p|_{m}\big) - c_a\big(|x - \alpha_p|_{m}\big) + c_a\big(|x - \alpha_q|_{m}\big) \\
    = f_q(x) +  c_a\big(|x^{'} - \alpha_p|_{m}\big) - c_a\big(|x - \alpha_p|_{m}\big) \\
    \underset{f_q(x)<f_p(x)}{<} D[e^-_p] + c_a\big(|x - \alpha_p|_{m}\big) + c_a\big(|x^{'} - \alpha_p|_{m}\big) - c_a\big(|x - \alpha_p|_{m}\big) \\
    = f_p(x^{'})
\end{gather*}

(*) follows from the triangle inequality that also holds in the circular case. (**) is based on the condition that $x^{'}\in [x, \hat{\alpha_p}]$. \\In short, due to the property of increasing returns, if $f_q$ is lower than $f_p$ at any point it will remain lower until intersecting with $f_p$ from the other side and thus at least until $\hat{\alpha_p}$.
\end{proof}

In Lemma~\ref{firstloop} it was assumed that $\alpha_p<\alpha_q<\alpha_p+180$ which might seem very restrictive. The assumption was made to reduce complexity in the proof, but the statement also holds for the circular case. To see this, observe that given two functions $f_p$ and $f_q$, the $x$-axis can be rotated and flipped to fulfill the above condition. Since it is circular, this operation is always possible and does not change the shown property. The same holds for the following observation, that formally shows that the any new function must only be evaluated at the intersection point of the previous functions in order to determine whether it is superior anywhere.

\begin{lemma}\label{interbest}
Let $\alpha_i$ refer to an angle of an incoming edge between $\alpha_p$ and $\alpha_q$, such that in the simplified non-circular model it can be assumed that $\alpha_p < \alpha_i < \alpha_q$. Assume also that $D[e_i^-] \geq D[e_p^-]$ and $D[e_i^-] \geq D[e_q^-]$. Let $x_{p,q}$ denote the intersection of $f_p$ and $f_q$.\\
If $f_i(x_{p,q}) >f_p(x_{p,q}) = f_q(x_{p,q})$ then $\nexists x: f_i(x) < f_p(x) \wedge f_i(x) < f_q(x) $.
\end{lemma}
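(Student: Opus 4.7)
My plan is to reduce the claim to two applications of Lemma~\ref{firstloop}, one on each side of the split point $x_{p,q}$. The guiding intuition is that since $f_i$ is worse than the common value of $f_p$ and $f_q$ at $x_{p,q}$, the increasing-returns property driving Lemma~\ref{firstloop} propagates this deficit outward in both directions, so that $f_i$ stays above the envelope $\min(f_p,f_q)$ and therefore is never strictly lower than both functions simultaneously.

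As a preliminary step I would show that in the simplified non-circular setting $f_p$ and $f_q$ meet at a unique point $x_{p,q}$, with $\min(f_p,f_q)=f_p$ on $(-\infty,x_{p,q}]$ and $\min(f_p,f_q)=f_q$ on $[x_{p,q},\infty)$. Uniqueness reduces to checking that $g(x):=c_a(|x-\alpha_p|)-c_a(|x-\alpha_q|)$ is monotone non-decreasing, which is a short case distinction across the three regions delimited by $\alpha_p$ and $\alpha_q$ using monotonicity of $c_a'$; the identification of the lower function on each side is then immediate from the asymptotic behaviour and $\alpha_p<\alpha_q$.

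The core of the proof is the contrapositive use of Lemma~\ref{firstloop}. Applied to the pair $(f_p,f_i)$, whose parameters satisfy $D[e_p^-]\le D[e_i^-]$ and $\alpha_p<\alpha_i<\alpha_p+180$, the strict inequality $f_i(x_{p,q})>f_p(x_{p,q})$ rules out any $x\le x_{p,q}$ with $f_i(x)<f_p(x)$, giving $f_i\ge f_p$ on $(-\infty,x_{p,q}]$. A symmetric argument on $(f_q,f_i)$, invoking the rotate-and-flip remark following Lemma~\ref{firstloop} in order to swap the roles of the two sides, yields $f_i\ge f_q$ on $[x_{p,q},\infty)$. Combined with the preliminary step this gives $f_i\ge\min(f_p,f_q)$ everywhere, contradicting the existence of any $x$ with $f_i(x)<f_p(x)$ and $f_i(x)<f_q(x)$. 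The step that I expect to need the most care is the mirrored invocation: I would spell out the coordinate change explicitly rather than rely on the informal rotate-and-flip remark, and also verify that once we lift the argument back to the circular domain, the intersection $x_{p,q}$ indeed lies inside the interval $[\alpha_p,\hat\alpha_p]$ required for the contrapositive to apply.
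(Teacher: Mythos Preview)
Your proposal is correct and follows essentially the same route as the paper: both arguments reduce the claim to two applications of Lemma~\ref{firstloop} (one for the pair $(f_p,f_i)$ and one, via reflection, for $(f_q,f_i)$), and both rely on the fact that $x_{p,q}$ lies within the reach $[\,\cdot\,,\hat\alpha_p]$ guaranteed by that lemma. The only cosmetic difference is that you split the domain at $x_{p,q}$ and phrase the step contrapositively, whereas the paper splits at $\alpha_i$ and argues by direct contradiction; your extra preliminary on the uniqueness of $x_{p,q}$ and the identification of $\min(f_p,f_q)$ on each side is additional rigor the paper leaves implicit.
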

\begin{proof}
We again use an observation that was made in Lemma~\ref{firstloop}: If $D[e_i^-] \geq D[e_p^-]$, then $f_i(x) < f_p(x)$ implies $|x-\alpha_i|_{m} < |\alpha_p-x|_{m}$ and the same holds for $q$. Thus, if there exists an $x$ where $f_i(x) < f_p(x)$ and $f_i(x) < f_q(x)$ then it follows that $\alpha_p < x < \alpha_q$. Assume $\alpha_p < x \leq \alpha_i < \alpha_q$ (the other case is analogous). Due to Lemma~\ref{firstloop} if $f_i(x)< f_p(x)$ at $x$ then it remains lower until the opposite point $\hat{\alpha_p}$. Since the intersection $x_{p,q}$ is at $\hat{\alpha_p}$ at the latest (otherwise $f_q$ would not be better then $f_p$ anywhere), it can be concluded that $f_i$ outperforms $f_p$ also at the intersection.
\end{proof}

\begin{theorem}\label{theorem:correct}
\autoref{alg:convex} assigns the correct predecessor to each outgoing edge.
\end{theorem}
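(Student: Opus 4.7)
The plan is to prove correctness by induction on the number of incoming edges processed in Part 2 of Algorithm~\ref{alg:convex}, maintaining the invariant that after the $i$-th iteration the tree $T_x$ correctly encodes the pointwise lower envelope of $\{f_1,\dots,f_i\}$. Concretely, the invariant is that for any two consecutive intersection triples $(x_1,o,p)$ and $(x_2,p,q)$ (in clockwise order on the cycle), $f_p$ is the pointwise minimum of $\{f_1,\dots,f_i\}$ on the arc from $x_1$ to $x_2$. Once this invariant is established, Part~3 is almost immediate: for each outgoing angle $\beta_j$, the two enclosing triples returned by \texttt{find-closest} identify the unique index $q$ such that $f_q$ attains the minimum at $\beta_j$, which by definition is the predecessor minimizing $D[e^-_i] + c_a(|\alpha_i - \beta_j|_m)$.

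First I would establish the base case corresponding to Part 1. Because the $D[e^-_j]$ are sorted, any $f_j$ with $j<i_{start}$ cannot strictly dominate $f_1$ anywhere without dominating it at the antipodal point $\hat{\alpha_1}$, by Lemma~\ref{firstloop} (dominance, once attained, propagates toward the point where $f_1$ is maximal, namely $\hat{\alpha_1}$). Thus checking only at $\hat{\alpha_1}$ correctly selects $f_{i_{start}}$ as the first function that enters the envelope, and inserting both of its intersections with $f_1$ yields a tree that encodes the two-function lower envelope of $\{f_1,\dots,f_{i_{start}}\}$.

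For the inductive step, suppose the invariant holds after processing the first $i-1$ functions and consider inserting $f_i$. The call to \texttt{find-closest}($T_\alpha,\alpha_i$) returns the enclosing angles $\alpha_p,\alpha_q$ in the current envelope; the arc through $\alpha_i$ is divided by $x_{p,q}$ into regions where $f_p$ and $f_q$ respectively are the current minimum. By Lemma~\ref{interbest} applied to $f_i,f_p,f_q$, if $f_i(x_{p,q})\ge \min(f_p(x_{p,q}),f_q(x_{p,q}))$ then $f_i$ is not the minimum anywhere on the full envelope, and leaving $T_x$ unchanged preserves the invariant. Otherwise $f_i$ enters the envelope on an arc containing $x_{p,q}$. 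That arc is contiguous because any previously-dominant function $f_r$ on the envelope, if it is overtaken by $f_i$ at some intersection, remains dominated by $f_i$ until the point where they intersect again (a direct consequence of Lemma~\ref{firstloop} applied in each direction). This justifies both walks in the algorithm: each walk deletes exactly the intersections lying inside $f_i$'s region of responsibility and stops at the first intersection where $f_i$ no longer dominates, which Lemma~\ref{interbest} certifies as the correct boundary. After the deletions, inserting $(x_1^*,p^*,i)$ and $(x_2^*,i,q^*)$ and updating the linked list restores the invariant for $\{f_1,\dots,f_i\}$.

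The main obstacle will be treating the circularity of the $x$-axis rigorously. Lemmas~\ref{firstloop} and \ref{interbest} are stated under a simplifying non-circular ordering assumption, so each invocation requires an implicit rotation/reflection that matches the cyclic geometry; I would formalize this by showing that the lemmas are invariant under such rotations, and then argue that the clockwise/counter-clockwise walks in the algorithm correspond exactly to the two directions in which Lemma~\ref{firstloop} guarantees monotone dominance. Additional border cases, such as $f_i$ sweeping out nearly the whole circle (so that the two walks meet or wrap around), or multiple incoming edges sharing the same $\alpha_i$, can be handled by straightforward case distinctions without affecting the main induction, but they must be explicitly addressed to complete the proof.
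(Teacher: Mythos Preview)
Your proposal is correct and follows essentially the same approach as the paper: both argue that Part~1 correctly discards functions dominated everywhere by $f_1$ via Lemma~\ref{firstloop}, that Part~2 maintains the correct ``areas of responsibility'' by testing $f_i$ at the intersection $x_{p,q}$ via Lemma~\ref{interbest} and then walking outward along the linked list until $f_i$ ceases to dominate, and that Part~3 then reads off the correct predecessor for each $\beta_j$. The only real difference is presentational: you package the argument as an explicit induction with a lower-envelope invariant, whereas the paper argues more narratively that each edge's area of responsibility is correctly created, shrunk, or removed; the paper also does not state the invariant globally but instead checks the individual update cases. Your formulation is arguably cleaner, and your explicit acknowledgment of the circularity and wrap-around border cases matches what the paper handles informally (and partially defers to Lemma~\ref{computeintersection} for the approximate-intersection issue).
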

\begin{proof}
Let $e^-_{i}$ be the optimal predecessor for an outgoing edge $j^*$. In the first loop of \autoref{alg:convex} an incoming edge $e^-_{i}$ is only skipped if its value at $\hat{\alpha_1}$ is higher than for $e_1$. By Lemma~\ref{firstloop}, if the value is not better than $f_1$ at $\hat{\alpha_1}$, then it is larger everywhere. Consequently, any incoming edge that is skipped in the first loop of \autoref{alg:convex} is outperformed by $e^-_1$ and would thus not be the unique optimal predecessor of any outgoing edge.\\
Any $e^-_i$ that is not skipped in the PART 1 will be considered at some point in PART 2 in \autoref{alg:convex}. Its closest angles $\alpha_p$ and $\alpha_q$ are retrieved. Due to Lemma~\ref{interbest} it suffices to check whether $f_i$ is better at the intersection of $f_p$ and $f_q$, otherwise it can be discarded. Lemma~\ref{interbest} can be applied because if $\alpha_p$ and $\alpha_q$ were retrieved from $T_{\alpha}$, then they have already been processed earlier than $\alpha_i$ and consequently their distance $D$ must be lower.

Furthermore, if $f_i$ is indeed lower at the intersection, then there might be other intersections where $f_i$ outperforms previous incoming edges. Note that any of such points must lie between $\alpha_p$ and $\alpha_q$. \autoref{alg:convex} takes care of such special cases by checking the neighboring intersections in both directions. If at some point it is outperformed by another function, then it can not become better at any further point due to convexity. This procedure thus defines the current area of responsibility for edge $e^-_i$.

With further iterations, this area can be reduced in size or removed. Due to the conditions in \autoref{alg:convex}, it is only removed if a new incoming edge indexed with $\hat{i}$ obtains lower values at both intersections of $f_i$ with its neighboring functions. 
Then \autoref{alg:convex} would correctly compute the new intersections of $f_{\hat{i}}$ with $f_p$ and $f_q$ which replace the intersections of $f_i$. It would thus not be possible that $f_i$ is the optimal predecessor of any outgoing edge, because it was previously outperformed behind the borders of its area of responsibility by the respective intersecting functions, and now it is also outperformed in the area inbetween.

In the last for-loop (PART 3) in \autoref{alg:convex} concerning the outgoing edges, the closest intersections $(x_1, u,v)$ and $(x_2, v,w)$ to $\beta_{j^*}$ are retrieved from $T_x$. Note that by the procedure of \autoref{alg:convex} it is ensured that \texttt{find-closest} always yields triples of this form where $v$ appears in both triples and $(x_1, u,v).\text{next} = (x_2, v, w)$ and $(x_2, v, w).\text{previous} = (x_1, u,v)$. 
To see this, observe that the while-loops in \autoref{alg:convex} stops at two triples  $(x_2, q^*, r)$ and $(x_1,o, p^*)$. Since all intersections inbetween were deleted, it is correct that $(x_2, q^*, r)$ is assigned the new intersection with of $f_i$ with $f_{q^*}$ as the previous triple and equivalently $(x_1, o, p^*)$ is assigned the new intersection of $f_i$ with $f_{p^*}$.
\end{proof}

\begin{lemma}\label{computeintersection}
The \texttt{compute-intersection} operation can be approximated efficiently and sufficiently for \autoref{alg:convex}.
\end{lemma}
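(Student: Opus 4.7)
My plan is to compute the root of $g(x) := f_p(x) - f_q(x)$ by bisection on an appropriately chosen monotonic arc of the circular angle axis, and then to argue that only constant precision is required, so that each call to \texttt{compute-intersection} runs in $O(1)$ time and the asymptotic bound in \autoref{lemma:runtime2} is preserved. First I rotate the angle axis so that $\alpha_p$ sits at $0$ and $\alpha_q$ at some $\Delta \in (0,360)$; the circle is then partitioned into the short arc of length $\Delta$ and the long arc of length $360-\Delta$, each containing at most one intersection, with \texttt{compute-intersection}$(f_p,f_q)$ and \texttt{compute-intersection}$(f_q,f_p)$ returning the roots on opposite arcs.

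On the short arc, parametrised by $t \in [0,\Delta]$ with $x = t$, I have $|x-\alpha_p|_m = t$ and $|x-\alpha_q|_m = \Delta - t$, so
\begin{equation*}
    g(t) \;=\; c_a(t) \;-\; c_a(\Delta - t) \;+\; \big(D[e_p^-] - D[e_q^-]\big),
\end{equation*}
which is monotonic in $t$ because $c_a$ is monotonically increasing. Its sign at the endpoints $t = 0$ and $t = \Delta$ tells me whether the root lies on this arc or on the long arc; in the former case standard bisection converges to any precision $\epsilon$ in $\lceil \log_2(\Delta/\epsilon) \rceil$ evaluations of $c_a$. For the long arc I parametrise similarly and note that both $|x-\alpha_p|_m$ and $|x-\alpha_q|_m$ switch from increasing to decreasing at the antipodes $\hat\alpha_p$ and $\hat\alpha_q$; I split the arc at these at most two points into at most three sub-arcs on each of which $g$ is monotonic (by convexity of $c_a$ on the portions where both distances co-move, and by direct monotonicity of $c_a$ on the middle piece where they move in opposite directions). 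A second bisection on the sub-arc whose endpoints have opposite-sign $g$-values locates the remaining intersection.

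Next I justify that low precision suffices. \autoref{alg:convex} uses the intersections only via \texttt{find-closest} against a finite set of outgoing angles $\beta_1, \dots, \beta_l$; any $\epsilon$ strictly less than $\min_{j, x^*} |\beta_j - x^*|$ produces the correct predecessor assignment, and in the degenerate case $\beta_j = x^*$ both candidate predecessors yield identical cost by construction. In the implementation $\epsilon$ is fixed at machine precision, contributing only a constant factor per call, so the cost attributed to \texttt{compute-intersection} in the analysis of \autoref{alg:convex} is $O(1)$ as required.

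The principal obstacle I anticipate is not the bisection itself but the bookkeeping around the circular domain: ensuring consistent orientation so that \texttt{compute-intersection}$(f_p,f_q)$ and \texttt{compute-intersection}$(f_q,f_p)$ always return the clockwise and counter-clockwise intersections in the convention used by \autoref{alg:convex}, splitting the long arc correctly at the antipodes, and handling degeneracies such as antipodal $\alpha_p, \alpha_q$ or coincident offsets $D[e_p^-] = D[e_q^-]$ where the two intersections lie symmetrically and the strict monotonicity of $g$ flattens on a sub-interval.
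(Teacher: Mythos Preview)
Your bisection idea is sound, but the argument has a genuine gap in the precision step, and the approach differs from the paper's in a way that matters. The paper does not approximate the intersection on the continuous axis at all: it observes that the only angle values ever fed back into the algorithm are $\beta_1,\dots,\beta_l$, so it suffices to return the $\beta_j$ nearest the true intersection. It therefore inserts the $\beta_j$ into an AVL tree and performs your sign-of-$g$ test at each node to decide which child to visit---essentially your bisection, but over the discrete index set rather than the real line. This gives $O(\log l)$ per call with no precision parameter, and the overall bound in Lemma~\ref{lemma:runtime2} already budgets for that $\log l$. The paper then argues separately that snapping to the nearest $\beta$ is still correct for the comparisons in PART~2 of Algorithm~\ref{alg:convex}, which is precisely why the algorithm tests both $f_p$ and $f_q$ at the approximate intersection rather than just one.

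Your $O(1)$ claim does not stand on its own: the $\epsilon$ you yourself identify as sufficient, $\min_{j,x^*}|\beta_j - x^*|$, is input-dependent and unbounded below, so ``machine precision'' is a modelling assumption, not a bound. Handling the exact tie $\beta_j = x^*$ does not cover the case where $\beta_j$ is merely very close to $x^*$. More importantly, your sufficiency argument asserts that intersections are used only via \texttt{find-closest} against the $\beta_j$, but in PART~2 they are evaluated directly in tests of the form $f_i(x_{p,q}) < f_p(x_{p,q})$; a wrong-side approximation there can trigger an incorrect deletion from $T_x$ or skip a function that should have been inserted. Replacing your continuous bisection with a binary search over the sorted $\beta_j$ (exactly the paper's move) closes both gaps and still fits within the stated runtime.
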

\begin{proof}
While the efficiency to compute an intersection between a function and its shifted version depends on the specific function, it is actually not necessary here to know the exact intersection. Crucially, in the end we are only interested in the values $\beta_1, \dots, \beta_l$, so it suffices to evaluate the functions at these values. Given two functions $f_p, f_q$ and their theoretical intersection $x_{p,q}$, we just aim to find $j^* = \argmin_j |x_{p,q} -\beta_j| $. For this purpose a similar procedure as the \texttt{find-closest} function (\autoref{alg:findclosest}) is utilized. First, all $\beta_j\ (j\in[1..l])$ are inserted in an AVL tree T ($\mathcal{O}\big(l \log l\big)$). For each call of \texttt{compute-intersection}, the tree is traversed in the following manner: Starting at the root node, $f_p$ and $f_q$ are evaluated at the $\beta$ value stored in the node. If $f_p(\beta)>f_q(\beta)$, the left child node is selected, otherwise the right child node. The intuition is that if $f_p(\beta)>f_q(\beta)$ then the intersection must be at a smaller $\beta$ value where $f_p$ takes lower values. During traversal of the tree, the optimal node with respect to the distance $|f_p(\beta)-f_q(\beta)|$ and its distance itself are memorized such that the closest $\beta$ is determined once a leaf node is reached.

However, one might object that the border case of the circularity is problematic in this case. Indeed, it might happen that the rightmost leaf in the tree is reached but the optimal $\beta$ value (the one closest to the intersection) is not the leftmost one. To prevent this case, we propose to build two trees with all $\beta_j$ inserted: One sorted and balanced tree with all $\beta_j$, and one with all $\hat{\beta_j}$ where $\hat{\beta_j} = (\beta_j+180)\text{ mod } 360$. After both trees have been traversed, the value that is closer to the intersection, i.e. where $|f_p(\beta)-f_q(\beta)|$ is lower, is selected. This ensures that the closest $\beta$ value is found, as it is not a border case in at least one of the trees.

We claim further that the discrete evaluations do not impact the proof of \autoref{theorem:correct}. 
Assume that $\alpha_p\leq \beta_{p,q} \leq x_{p,q}$ (the other side is again analogous). Note that there can not exist any $\hat{\beta}$ that satisfies $\alpha_p\leq \beta_{p,q} \leq \hat{\beta} \leq x_{p,q}$ because then 
$\hat{\beta}$ would have been selected as the intersection between $f_p$ and $f_q$. \\On the other hand, there can be $\beta^{'}$ with  $\alpha_p\leq \beta_{p,q} \leq x_{p,q} \leq \beta^{'} \leq \alpha_q$. Crucially, it is possible that $f_i(\beta^{'})< f_p(\beta^{'})$ and even $f_i(x_{p,q})< f_q(x_{p,q})$, but $f_i(\beta_{p,q})>f_q(\beta_{p,q})$ such that the new function $f_i$ is not better at the approximated intersection but still lower at the actual intersection. However, it has been shown in Lemma~\ref{firstloop} that in this case $f_i(x)<f_q(x)\ \forall x<\beta^{'}$. This is why in \autoref{alg:convex} it is checked whether \textit{both} $f_p$ and $f_q$ outperform $f_i$ at the approximate intersection (which would be redundant if the exact intersection was available because the values for $f_p$ and $f_q$ would be equal). Thus, if $\beta^{'}$ with the above properties exists, then $f_i(\beta_{p,q})<f_q(\beta_{p,q})$ and therefore the algorithm would still compute the intersections of $f_i$ with the previous functions and $\beta^{'}$ would be assigned to the correct range.
\end{proof}

\begin{theorem}
The runtime of \autoref{alg:convex} is $\mathcal{O}\big((k+l) \log kl \big)$.
\end{theorem}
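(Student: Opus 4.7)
The plan is to bound each of the three parts of Algorithm~\ref{alg:convex} separately and show they sum to $\mathcal{O}((k+l)\log kl)$. The preprocessing consists of sorting the incoming edges by $D[e_i^-]$, which takes $\mathcal{O}(k \log k)$, together with building the AVL tree of $\beta$-values used to support \texttt{compute-intersection} in $\mathcal{O}(\log l)$ per call (Lemma~\ref{computeintersection}), which costs $\mathcal{O}(l \log l)$. All AVL operations on $T_\alpha$ and $T_x$ cost $\mathcal{O}(\log k)$ since these trees never contain more than $\mathcal{O}(k)$ elements.

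For Part 1, I would observe that the while loop scans through at most $k$ incoming edges, each contributing an $\mathcal{O}(1)$ evaluation of $f_{i_{\text{start}}}$ at the fixed point $\hat{\alpha}_1$, followed by a constant number of calls to \texttt{compute-intersection} and AVL insertions. Thus Part 1 is $\mathcal{O}(k + \log l)$. Part 3 consists of $l$ calls to \texttt{find-closest}($T_x, \beta_j$), each costing $\mathcal{O}(\log k)$, for a total of $\mathcal{O}(l \log k)$.

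The delicate step is Part 2. Per outer iteration $i$, the call to \texttt{find-closest} on $T_\alpha$ costs $\mathcal{O}(\log k)$, the evaluations of $f_i, f_p, f_q$ at the stored intersection $x_{p,q}$ are $\mathcal{O}(1)$, the two concluding calls to \texttt{compute-intersection} are $\mathcal{O}(\log l)$ each, and the closing insertions and pointer updates are $\mathcal{O}(\log k)$. Summed over all $k$ outer iterations, this non-loop work contributes $\mathcal{O}(k \log kl)$. The main obstacle is the two inner while-loops, each iteration of which performs a deletion in $T_x$ (and $T_\alpha$) at cost $\mathcal{O}(\log k)$; a naive per-iteration bound would only yield $\mathcal{O}(k^2)$. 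I would resolve this with an amortization argument: every deletion in either while-loop removes a triple that must have been previously inserted into $T_x$, and each outer iteration inserts at most two new triples into $T_x$. Hence the total number of while-loop iterations across the entire execution of Part 2 is bounded by $2k$, contributing $\mathcal{O}(k \log k)$ in aggregate rather than per outer iteration.

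Adding these bounds, the preprocessing gives $\mathcal{O}(k \log k + l \log l)$, Part 1 gives $\mathcal{O}(k + \log l)$, Part 2 gives $\mathcal{O}(k \log kl)$ (including the amortized while-loop cost), and Part 3 gives $\mathcal{O}(l \log k)$. The total is $\mathcal{O}((k+l) \log kl)$, matching the claim. The amortization of the inner while-loops is the only subtle point; every other piece is routine bookkeeping using the AVL-tree bounds and Lemma~\ref{computeintersection}.
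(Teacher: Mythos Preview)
Your proof is correct and follows essentially the same route as the paper's own argument: both bound the preprocessing, Part~1, and Part~3 by routine AVL bookkeeping, and both handle the inner while-loops of Part~2 by the amortization observation that every deletion corresponds to a prior insertion and only $\mathcal{O}(k)$ insertions ever occur. Your phrasing of the amortization (``at most two new triples per outer iteration, so at most $2k$ deletions total'') is in fact a bit more explicit than the paper's, but the idea is identical.
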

\begin{proof}
First, the incoming edges are sorted by their distance to the source vertex, taking $\mathcal{O}\big(k \log k \big)$ time. 
As argued in lemma \ref{computeintersection}, the \texttt{compute-intersection} operation can be approximated here by a discrete evaluation on the $\beta$-values, leading to a runtime of $\mathcal{O}\big(\log l \big)$ for each call to \texttt{compute-intersection} once the AVL-tree of $\beta$-values is build ($\mathcal{O}\big(l \log l \big)$). In addition, each insert and delete operation to $T_x$ takes $\mathcal{O}\big(\log k \big)$ since at most $k$ elements are in the tree (in each iteration, one intersection is deleted and two intersections are added). Similarly, each insert and delete operation, as well as each call to \texttt{find-closest} on $T_{\alpha}$ requires $\mathcal{O}\big(\log k \big)$ time. Last, the \texttt{find-closest} operation again only requires a traversal of $T_x$ in $\mathcal{O}\big(\log k \big)$ steps.\\
Now it is important to observe that in the while-loop of PART 2 of \autoref{alg:convex}, elements are only deleted from $T_x$ and $T_a$. The number of operations in the while loop is therefore limited to the maximum number of elements of the tree. The tree contains at most $\mathcal{O}(k)$ values at any point, because in each iteration (for each incoming edge) at most two one intersections are added, and one is deleted. Thus, PART 2 of \autoref{alg:convex} requires $\mathcal{O}\big(k (\log l + \log k) \big) = \mathcal{O}\big(k \log kl \big)$ operations.

Finally, in PART 3 of \autoref{alg:convex} the closest element in $T_x$ is found for each outgoing edge, causing an additional runtime of $\mathcal{O}\big(l \log k \big)$. Given the optimal predecessor edges, the distances of $E^+$ can be updated in $\mathcal{O}\big(l \big)$. Altogether, the runtime is $\mathcal{O}\big(k \log k + l \log l + k \log kl + l \log k \big) \in \mathcal{O}\big((k+l) \log kl\big)$. 
\end{proof}
\end{appendices}

\end{document}